\renewcommand{\epsilon}{\varepsilon}
\def\bR{\mathbb{R}}
\def\bN{\mathbb{N}}
\def\cA{\mathcal{A}}
\newcommand{\dist}{\operatorname{dist}}
\newcommand{\diam}{\operatorname{diam}}
\def\dashint{\operatorname%
{\,\,\text{\bf--}\kern-.98em\DOTSI\intop\ilimits@\!\!}}
\def\polhk#1{\setbox0=\hbox{#1}{\ooalign{\hidewidth
   \lower1.5ex\hbox{`}\hidewidth\crcr\unhbox0}}}
\newtheorem{theorem}{Theorem}[section]
\newtheorem{thm}[theorem]{Theorem}
\newtheorem{pro}[theorem]{Proposition}
\newtheorem{lem}[theorem]{Lemma}
\newtheorem{lemma}[theorem]{Lemma}
\newtheorem{cor}[theorem]{Corollary}
\theoremstyle{definition}
\newtheorem{defn}[theorem]{Definition}
\theoremstyle{remark}
\newtheorem{rem}[theorem]{Remark}
\newtheorem{ex}[theorem]{Example}
\newtheorem{assumption}[theorem]{Assumption}
\begin{document}

\title[Parabolic operators in a time varying domain]{Boundary value problems for parabolic operators in a time-varying domain}

\author[S. Cho]{Sungwon Cho}
\address[S. Cho]{Department of Mathematics Education,
Gwangju national university of education,
93 Pilmunlo Bukgu,
Gwangju 500-703, Republic of Korea}
\email{scho@gnue.ac.kr}
\thanks{S. Cho was supported by Basic Science Research Program through the National Research Foundation of Korea (NRF) funded by the Ministry of Science, ICT \& Future Planning (2012-0003253).
}

\author[H.Dong]{Hongjie Dong}
\address[H. Dong]{Division of Applied Mathematics, Brown University,
182 George Street, Providence, RI 02912, USA}
\email{Hongjie\_Dong@brown.edu}
\thanks{H. Dong was partially supported by the NSF under agreement DMS-1056737.}

\author[D. Kim]{Doyoon Kim}
\address[D. Kim]{Department of Applied Mathematics, Kyung Hee University,
1732 Deogyeong-daero, Giheung-gu, Yongin-si, Gyeonggi-do 446-701, Republic of Korea}
\email{doyoonkim@khu.ac.kr}
\thanks{
D. Kim was supported by Basic Science Research Program through the National Research Foundation of Korea (NRF) funded by the Ministry of Science, ICT \& Future Planning (2011-0013960).
}

\subjclass[2010]{35K20, 35A01}

\keywords{Parabolic Dirichlet boundary value problems, Time-varying domain, Exterior measure condition, Vanishing mean oscillation, Blowup low-order coefficients}

\begin{abstract}
We prove  the existence of unique solutions to the Dirichlet boundary value problems for linear second-order uniformly parabolic  operators in either divergence or  non-divergence form with boundary blowup low-order coefficients.
The domain is possibly time varying, non-smooth, and satisfies the exterior measure condition.
\end{abstract}

\maketitle

\section{Introduction} \label{introduction} 

In this paper we consider parabolic operators in divergence form
\begin{equation*} \label{D}\tag{D}
Lu =   D_t (u) -  D_j (a_{ij}
D_i  u) +  b_i D_i u - D_i (c_i u) + c_0 u
\end{equation*}
and in non-divergence form
\begin{equation*} \label{ND}\tag{ND}
Lu= D_t u  - a_{ij} D_{ij}u  + b_i D_i u +c_0 u
\end{equation*}
in a time-varying domain $Q$ in $\mathbb{R}^{n+1}$, $n\ge 1$, with boundary blowup low-order coefficients. Here and in the sequel,
$$
D_i := \frac{\partial }{\partial x_i}, \quad D_{ij} := D_i D_j, \,\,i ,j =1, \ldots , n,\quad D_t :=\frac{\partial}{\partial t },
$$
some derivatives in parentheses in divergence form are understood in  the weak sense, and summation over repeated indices is assumed. For convenience of notation, in the sequel we set $c_i=0,i=1,\ldots,n,$ in the non-divergence case.

With the  operator $L$ in \eqref{D} or \eqref{ND}, we study the following boundary value problems of Dirichlet type:
\begin{equation*}
			\label{DP}\tag{DP}
\left\{	
\begin{aligned}
L u &= f \quad \text{ in } Q,
\\
u &=g \quad \text{ on } \partial_p Q,
\end{aligned}
\right.
\end{equation*}
where
$f = f_0  -D_i f_i$ for the divergence case and $\partial_p Q$ is the parabolic boundary of $Q$ (see Definition \ref{parabolic_boundary} below). We prove that there exist unique solutions to the Dirichlet problems \eqref{DP}
when the domain satisfies the exterior measure condition and the boundary data is zero ($g\equiv0$).
In the non-divergence case, solutions satisfy the equation in the strong sense, and are locally in $W_p^{1,2}$, $p > (n+2)/2$.
While in the divergence case, they are understood in the weak sense. In both cases, solutions are continuous up to the boundary.
The coefficients which we consider have two features. First, concerning the leading coefficients, while in the divergence case we do not impose any regularity assumptions, in the non-divergence case we assume that
they have vanishing mean oscillations (VMO) with respect to the spacial variables and merely measurable with respect to the time variable.
Second, the lower-order coefficients may blow up near the boundary with a certain optimal growth condition.

Indeed, there is an extensive literature on the existence of solutions to the boundary value problem \eqref{DP} in a straight cylindrical domain with lower-order coefficients which are bounded or in certain Sobolev spaces.
See, for instance, \cite{LSU, La, Kr87, BC93, Li, Kr08} and the references therein.

Regarding  non-divergence form parabolic equations in time-varying domains (or more general degenerate elliptic-parabolic equations) one may find related results in Fichera \cite{Fichera1, Fichera2}, Oleinik \cite{OA1, OA3}, Kohn--Nirenberg \cite{KN}, and Krylov \cite{Kr02}, where, under certain assumptions, the existence, uniqueness, and regularity of solutions were discussed.
The solutions to parabolic equations in non-divergence form considered here are called $L_p$-strong solutions in \cite{Crandall}, where the authors treated various types of solutions to nonlinear equations.
We note that in \cite{Crandall} Crandall, Kocan, and \'Swi{\polhk{e}}ch considered equations in a cylindrical domain satisfying a uniform exterior cone condition
and the $L_p$-strong solutions are locally in $W_p^{1,2}$, $p > p_0$, where $p_0>(n+2)/2$ is a number close to $n+1$.
We also mention that in \cite{Li08}  Lieberman treated a similar problem for  a non-divergence elliptic operator in a cylindrical domain with blowup lower-order coefficients in weighed H\"older spaces.

As noted above, in the non-divergence case we assume that the leading coefficients are in a class of VMO functions. The study of elliptic and parabolic equations with VMO coefficients was initiated by Chiarenza, Frasca, and Longo \cite{CFL91}, and continued in \cite{CFL93} and \cite{BC93}. The class of leading coefficients in this paper was introduced by Krylov \cite{Kr07} in the context of parabolic equations in the whole space. See also \cite{Kim08, DY10, Dg08, DK09} for further development, the results of which we shall use in the proof.

For divergence form equations in time-varying domains, Yong \cite{Y89} proved the unique existence of weak solutions by using a penalization method when the domain satisfies the exterior measure condition (Definition \ref{A}) and its cross section at time $t$ is simply connected. He considered equations with a non-zero initial condition, and coefficients and the data $f_0, f_i$ are in  some suitable Lebesgue spaces, so that the weak solutions are actually H\"older continuous up to the boundary.
Later in \cite{BHL} Brown  et al. obtained a similar solvability result in a parabolic Lipschitz time-varying  domain with bounded measurable coefficients and more general square integrable data. One may refer to Lions \cite{Li61} for existence results in an abstract framework. Recently, Byun and Wang \cite{BW} obtained certain $L_p$-estimates for equations in time-varying $\delta$-Reifenberg domains. We also refer the reader to \cite{Li86, LM95, HL96,RN03, Ny08} and the references therein for other results about boundary value problems in time-varying domains. Of course, the boundary value problem in curvilinear cylinders can be deduced from the estimates in ``straight cylinders" by using a change of variables as long as the domains are sufficiently regular.

For the Laplace operator, we recall that a necessary and sufficient condition for the solvability of the corresponding boundary value problem to \eqref{DP} is the celebrated Wiener's criterion. See,  for example, \cite{W, KE, LK}.
As to the heat equation, an analogous result was established by Evans and Gariepy \cite{EG82}.  We are going to use this result in our proofs below.


To formulate our main results, we introduce some notation, function spaces, and assumptions. A typical point in $\bR^{n+1}$ is denoted by $X=(x,t)$, where $x\in \bR^n$ and $t\in \bR$.
The \emph{parabolic distance} between points
$X=(x,t)$ and $Y=(y,s)$ in $\mathbb R \sp{n+1}$
is $$
|X-Y|:=\max\{|x-y|, |t-s|^{\frac{1}{2}} \}.
$$
For any $Y=(y,s)\in \mathbb R \sp{n+1}$ and $r>0$, we set
$$
B_{r}(y):=\{x \in \mathbb R \sp n : |x-y|<r \}
$$
and
\begin{equation*}
C_{r}(Y):=B_{r}(y)\times (s-r^{2},s)=\{X=(x,t) \in \mathbb R  \sp{n+1}: |X-Y|<r,\,\, t<s\}
\end{equation*}
to be a ball in $\bR^n$ and a {\em standard parabolic cylinder} in $\bR^{n+1}$, respectively. We also set
$\hat C_r(Y)=B_r(y)\times (s-r^2,s+r^2)$.
Let $|\Gamma |:=|\Gamma |_{n+1}$
be the $n+1$-dimensional Lebesgue measure of a set $\Gamma$ in $\bR^{n+1}$.
For any real number $c$, denote $c_{+}:=\max \left( c,0\right)$ and $c_{-}:=\max \left( -c,0\right)$.

We assume that the coefficients $a_{ij}$ are defined on $\mathbb{R}^{n+1}$ and satisfy  the following uniform ellipticity condition:
there exists  a constant $\nu \in (0,1]$ such that
\begin{align*} \label{UE}\tag{UE}
 \nu |\xi|^2 \le \sum_{i,j=1}^n a_{ij} (X) \xi_i \xi_j ,
\quad
\sup_{i,j}   |a_{ij}(X)| \le \nu^{-1}
\end{align*}
for all $X\in\bR^{n+1}$ and $\xi =(\xi_1, ... \xi_n) \in \mathbb{R}^n$. In the non-divergence case, we impose the following {\em vanishing mean oscillation} (VMO) condition  on $a_{ij}$ with respect to $x$. We denote
\begin{equation*}
\omega_a(R):=\sup_{r\in (0,R]}\sup_{(x_0,t_0)\in \bR^{d+1}}\sup_{i,j}
\dashint_{C_r(t_0,x_0)} | a_{ij}(x,t) - \dashint_{B_r(x_0)} a_{ij}(y,t)\,dy | \, dX,
\end{equation*}
where $\displaystyle \dashint_C f(Y)\,d Y$ is the average of $f$ over $C$.

\begin{assumption}	\label{RA}
We have
$$
\omega_a(R)\to 0 \quad \text{ as } R \to 0^+.
$$
 \end{assumption}
Note that, under this assumption, no regularity is reguired for $a_{ij}$ as functions of $t$. For instance, $\omega_a(R) = 0$ if $a_{ij} = a_{ij}(t)$.
In the non-divergence case, without loss of generality, we may assume $a_{ij}=a_{ji}$. However, we do not impose such condition in the divergence case.

For lower-order coefficients, we assume the following:
\begin{equation}
					\label{D-sign_01}
 \int_Q  \left(c_0(X) \phi(X) + c_i (X) D_i \phi (X)\right) dX \ge 0 ,
\quad \forall \phi \ge 0, \quad \phi \in C^\infty_0 (Q)
\end{equation}
in the divergence case, and
\begin{align*} 
c_0 (X) \ge 0 \quad \text{ in } Q
\end{align*}
in the non-divergence case.
Note that the above two conditions can be collectively referred to the following unified condition:
\begin{align} \label{sign}
 L 1 \ge 0,
\end{align}
which implies the maximum principle for $L$. The lower-order coefficients $b_i$ and $c_i$ are allowed to blow up near the boundary under a certain growth condition, stated in the theorem below. In light of the example before the proof of Theorem \ref{main}, this growth condition is optimal.

We impose the following exterior measure condition (or condition (A)) on the domain.
\begin{defn} \label{A}
An open set $Q \subset \mathbb R  \sp {n+1}$ satisfies the condition (A)
if there exists a constant $\theta _{0}\in (0,1)$
such that for any $X=(x,t) \in \partial_p Q $ and $r>0$,
we have  $|C_{r}(X)\setminus Q |>\theta _{0}|C_{r}|$.
\end{defn}

We deal simultaneously with both cases of divergence \eqref{D} and non-divergence  \eqref{ND} form.
In fact, the first author and Safonov took a unified approach and obtained global a priori H\"older estimates in \cite[Corollary 3.6, Theorem 3.10]{CS}
for elliptic equations without lower-order terms and with locally bounded right-hand side, and in \cite[Theorem 3.4, Theorem 4.2]{Ch10} for the parabolic case.
For this approach, it is convenient to introduce the solution space $W(Q)$,
which varies according to \eqref{D} and \eqref{ND}.
Let $p_0 \in (\frac {n+2} 2,\infty) $ be a fixed constant.
We use the notation:
\begin{eqnarray*}
&W(Q)&:=W^{ND} (Q):=W^{2,1}_{p_0, \text{loc}} (Q)\cap C(\bar{Q}) \quad \text{in the case \eqref{ND}, } \\
&W(Q)&:=W^{D} (Q):= H (Q)\cap C(\bar{Q})\quad \text{in the case \eqref{D},}
\end{eqnarray*}
where $\bar Q:=Q\cup \partial Q$ and
\begin{multline*}
H(Q) = \{ u \in L_2 (Q) \,|\,  D_i u \in L_{2, \text{loc}} (Q),\\
D_t (u)  = g_0 +  D_i g_i  \text{ for some } g_0 \in L_{1, \text{loc}} (Q) ,  g_i \in  L_{2, \text{loc}} (Q)\}.
\end{multline*}
Here
$  f \in L_{p,\text{loc}} (Q)$, $p>0$ if and only if $f  \in  L_{p}(Q')$
for any open set $Q' \Subset_p Q$. Throughout the paper we use $Q' \Subset_p Q$ to indicate that $Q'\subset Q$ and $\dist (Q' , \partial_p Q ) > 0$.

In both cases, the functions $u\in W$ are continuous on $\bar{Q}$.
In addition, in the non-divergence case \eqref{ND},
the functions $u$ have strong derivatives $D_{i}u$, $D_{ij}u$, $D_t u$ in the Lebesgue space $L_{p_0,\text{loc}}(Q)$.
In this case, the relations $Lu=f$ or $Lu\leq f$ are understood in the almost everywhere sense in $Q$.
In the divergence case \eqref{D}, the functions $u\in W$ have weak (generalized) derivatives
$D_{i}u$ and $D_t u$, and $Lu= (\leq, \geq ) f_0 -D_i f_i$ for $f_0, f_i  \in L_{1,\text{loc}}(Q)$ is understood in the following weak sense:
\begin{multline*} 
\int_Q \left( - u D_t \phi  + a_{ij}   D_i u  D_j \phi+
   b_i D_i u \phi +  c_i u D_i \phi +c_0 u \phi - f_0 \phi  - f_i D_i \phi \right)  \, d X\\= (\leq ,\,\geq )   0
\end{multline*}
for any nonnegative function $ \phi \in C_0^{\infty }(Q)$.

Regarding the data, we consider more general function spaces than those in \cite{CS} and \cite{Ch10}.
We first define, for $\beta \in (0,2)$, $p>0$,
\[
\|f\|_{F_{\beta,p}(Q)} :=
\sup_{Y_0 \in \partial_p Q, r>0  }
 \left( \dashint_{\hat C_r(Y_0) \cap Q } |d^{2-\beta} (X)  f(X) |^{p} \, d X \right)^{1/p},
\]
where $d(X) :=\dist(X, \partial_p Q )$,
and we say $f \in F_{\beta,p}(Q)$ when $\|f\|_{F_{\beta,p}(Q)} <\infty$.
Note that if $f \in F_{\beta,p}(Q)$, then $f \in L_{p,\text{loc}} ( Q )$.
For some $\beta\in (0,1)$,
we set
\begin{align*}
F(Q)& = F_\beta (Q)\\
 &:= \{ f= (f_0,f_1,\ldots,f_n )  \,:\,f_0\in F_{\beta,p_0}(Q),f_i\in F_{1+\beta,2p_0}(Q) ,i=1,\ldots,n\},\\
 \|f\|_{F(Q)}&:=\|f_0\|_{F_{\beta,p_0}(Q)}+\sum_{i=1}^n \|f_i\|_{F_{1+\beta,2p_0}(Q)}
\end{align*}
for the  divergence case,
and $F(Q) = F_\beta (Q) := F_{\beta,p_0}(Q) $ for the non-divergence case.

Now we are ready to state the main results of this paper:

\begin{thm} \label{main}
Let $p_0\in (\frac {n+2} 2,\infty)$, $L$ be a uniformly parabolic operator in either divergence \eqref{D}
or non-divergence form \eqref{ND}
satisfying \eqref{sign},
and $Q$ be a bounded domain in $\mathbb{R}^{n+1}$ satisfying the measure condition (A) with a constant $\theta_0 \in (0,1)$.
Suppose that $c_0 \in L_{\infty,\text{loc}}(Q)
$ and
\begin{align*}
|b_i|, |c_i|=o(d^{-1}),\quad d = d(X),
\end{align*}
i.e., there exists a nondecreasing function $\gamma$ on $\bar\bR_+$ such that $\gamma(d)\to 0$ as $d\to 0$ and
$$
|b_i|, |c_i|\le d^{-1}\gamma(d).
$$
For the non-divergence case, we further assume that the coefficients $a_{ij}$ satisfy Assumption \ref{RA}. Let $\beta_1:= \beta_1 (n, \nu , \theta_0)$
be  the constant from Proposition~\ref{UF} below.

Then, for any $f \in F_\beta  (Q), \beta\in (0,\beta_1)$,
there exists a unique solution $u \in W(Q)$ to the Dirichlet problem \eqref{DP} when $g\equiv 0$.
\end{thm}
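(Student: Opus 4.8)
The plan is to reduce \eqref{DP} (with $g\equiv0$) to a solvability statement for the principal operator $L_0$ — the time-derivative and second-order terms together with the zeroth-order term $c_0$, which by \eqref{sign} has the favorable sign and so only helps the maximum-principle-based estimates — and then to recover the first-order part by a perturbation argument that exploits the decay $|b_i|,|c_i|=o(d^{-1})$: applied to an $L_0$-solution, these terms produce data that is \emph{small} in the weighted norm defining $F_\beta(Q)$ near $\partial_p Q$.

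\textbf{Step 1: the principal operator.} For $L_0u=f$ in $Q$, $u=0$ on $\partial_p Q$, $f\in F_\beta(Q)$ with $\beta\in(0,\beta_1)$, uniqueness follows from the maximum principle guaranteed by \eqref{sign}. For the a priori bound, the maximum principle gives $\|u\|_{L_\infty(Q)}\le C\|f\|_{F_\beta(Q)}$ and Proposition~\ref{UF} gives $\|u\|_{C^\beta(\bar Q)}\le C\|f\|_{F_\beta(Q)}$; applying interior estimates — classical in the divergence case, the parabolic VMO estimates of \cite{Kr07,DK09} in the non-divergence case — in standard cylinders of size comparable to $d(X)$, rescaled to unit size, and using the $C^\beta$-bound with $u|_{\partial_p Q}=0$ to dominate $u$ on such a cylinder by $C\,d(X)^\beta\|f\|_{F_\beta(Q)}$, one upgrades this to weighted derivative bounds, e.g.
\[
\sup_{Y_0\in\partial_p Q,\ r>0}\Big(\dashint_{\hat C_r(Y_0)\cap Q}\big|d^{\,1-\beta}Du\big|^{2p_0}\,dX\Big)^{1/(2p_0)}+\big(\text{Hessian term in \eqref{ND}}\big)\le C\|f\|_{F_\beta(Q)},
\]
the weights being matched to those in the definitions of $F_{\beta,p_0}$ and $F_{1+\beta,2p_0}$. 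Let $\mathcal W(Q)\subset W(Q)$ be the Banach space on which the left-hand side, together with $\|\cdot\|_{C^\beta(\bar Q)}$, is a norm; its elements vanish on $\partial_p Q$. Existence in $\mathcal W(Q)$ for $L_0$ then follows by the method of continuity along $L_0^s:=(1-s)(D_t-\Delta)+sL_0$: the estimate above is uniform in $s\in[0,1]$ (ellipticity, VMO modulus, and condition~(A) being fixed, and the zeroth-order coefficient $sc_0$ keeping its sign), while for $s=0$ the inhomogeneous Dirichlet problem for the heat operator is solvable in $C(\bar Q)$ by subtracting a heat potential of $f$ and applying the Evans--Gariepy Wiener criterion \cite{EG82} to the resulting homogeneous problem, condition~(A) making every point of $\partial_p Q$ regular for the heat equation. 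This produces a bounded solution operator $T_0\colon F_\beta(Q)\to\mathcal W(Q)$.

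\textbf{Step 2: the first-order part.} Write $\mathcal L_1u:=b_iD_iu-D_i(c_iu)$ (which reduces to $b_iD_iu$ in the non-divergence case, where $c_i\equiv0$), so that \eqref{DP} with $g\equiv0$ is equivalent to $(I+T_0\mathcal L_1)u=T_0f$ in $\mathcal W(Q)$. The crucial point is that $\mathcal L_1$ maps $\mathcal W(Q)$ boundedly into $F_\beta(Q)$, \emph{with small operator norm near the boundary}: from $|b_i|,|c_i|\le d^{-1}\gamma(d)$ one has $|d^{\,2-\beta}b_iD_iu|\le\gamma(d)\,|d^{\,1-\beta}Du|$ and likewise $|d^{\,1-\beta}c_iu|\le\gamma(d)\,|d^{-\beta}u|$, so the weighted bounds of Step~1 (together with $|u|\le C d^\beta\|u\|_{C^\beta(\bar Q)}$) show that the cutoff $\mathcal L_1^\delta$ of $\mathcal L_1$ to $\{d<\delta\}$ satisfies $\|\mathcal L_1^\delta u\|_{F_\beta(Q)}\le\varepsilon(\delta)\|u\|_{\mathcal W(Q)}$ with $\varepsilon(\delta)\to0$ as $\delta\to0$, precisely because $\gamma(d)\to0$. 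Hence $I+T_0\mathcal L_1^\delta$ is invertible on $\mathcal W(Q)$ for small $\delta$ by the contraction principle, whereas $T_0(\mathcal L_1-\mathcal L_1^\delta)$ — whose coefficients are bounded and supported in the fixed compact set $\{d\ge\delta\}\cap\bar Q$ — is compact on $\mathcal W(Q)$ by interior $W^{2,1}_{p_0}$ (resp.\ $H(Q)$) regularity and the Rellich--Kondrachov embedding. Then
\[
I+T_0\mathcal L_1=(I+T_0\mathcal L_1^\delta)\big(I+(I+T_0\mathcal L_1^\delta)^{-1}T_0(\mathcal L_1-\mathcal L_1^\delta)\big)
\]
is an invertible operator composed with identity-plus-compact, hence Fredholm of index zero and so invertible as soon as it is injective. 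Injectivity is exactly the uniqueness assertion: if $(I+T_0\mathcal L_1)u=0$ then $u\in\mathcal W(Q)$, $u=0$ on $\partial_p Q$, $Lu=0$, and the maximum principle from \eqref{sign} forces $u\equiv0$. Therefore $u=(I+T_0\mathcal L_1)^{-1}T_0f\in W(Q)$ is the unique solution of \eqref{DP}.

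\textbf{Main obstacle.} I expect the real work to lie in Step~1's weighted a priori estimate and in the verification in Step~2 that $\mathcal L_1$ lands in $F_\beta(Q)$ with the stated small-norm behaviour near $\partial_p Q$: since condition~(A) is merely an exterior \emph{measure} condition, no boundary regularity beyond the H\"older exponent $\beta_1$ of Proposition~\ref{UF} is available, so the derivative bounds must be squeezed out of interior estimates rescaled to the scale $d(X)$ — and one must check that the exponents and weights in the definition of $F_\beta(Q)$, in particular the pairing of $f_0\in F_{\beta,p_0}$ with $f_i\in F_{1+\beta,2p_0}$ in the divergence case, match exactly what $b_iD_iu$ and $D_i(c_iu)$ produce; this is where the rate $o(d^{-1})$ is seen to be optimal, consistently with the example preceding the proof. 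A secondary technical point is to choose the weighted space $\mathcal W(Q)$ so that the continuity method in Step~1 closes and the $s=0$ solution produced via \cite{EG82} genuinely belongs to it, and to accommodate $c_0\in L_{\infty,\text{loc}}(Q)$ (rather than $L_\infty(Q)$) in those estimates.
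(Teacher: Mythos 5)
There is a genuine gap, and it sits exactly where you locate the ``real work'': the claim that the first-order part $\mathcal L_1 u = b_iD_iu - D_i(c_iu)$ maps your weighted space boundedly into $F_\beta(Q)$. In the divergence case \eqref{D} the paper assumes \emph{no} regularity of $a_{ij}$ beyond measurability and ellipticity, so the only interior gradient integrability available is Meyers' $L_{2+\epsilon}$ with a small $\epsilon=\epsilon(n,\nu)$; there is no interior (let alone weighted, boundary-anchored) $L_{p_0}$ or $L_{2p_0}$ estimate for $Du$ when $p_0>2+\epsilon$, and counterexamples of Meyers type show such estimates are false. Since $F_\beta(Q)$ requires the $f_0$-component in $F_{\beta,p_0}$ with $p_0>\frac{n+2}{2}$ (this integrability is essentially forced by Lemma \ref{pointwise} and Proposition \ref{UF}, which underlie your operator $T_0$), the term $b_iD_iu$ cannot be shown to land in the data space for $n\ge 3$, and your Step~1 ``upgrade'' to the bound on $\sup_{Y_0,r}(\dashint |d^{1-\beta}Du|^{2p_0})^{1/(2p_0)}$ is not obtainable. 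Consequently $T_0\mathcal L_1$ is not a bounded operator on $\mathcal W(Q)$ and the contraction/Fredholm scheme does not start. A second, smaller but real, problem is your use of Proposition \ref{UF} to get the global bound $\sup_Q d^{-\beta}u\le N\|f\|_{F_\beta(Q)}$ for $L_0u=f$ on all of $Q$: part (i) applies only to $Q'\Subset_p Q$ with $u=0$ on $\partial_p Q'$, and part (ii) requires $f$ to \emph{vanish near} $\partial_p Q$ (and the lower-order coefficients to be bounded there), so it cannot be invoked directly for general $f\in F_\beta(Q)$; likewise the $s=0$ solvability via a ``heat potential of $f$'' plus \cite{EG82} is not justified when $f$ is allowed to blow up like $d^{\beta-2}$.

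The paper's proof is organized precisely to avoid both issues: it never estimates $Du$ near the boundary. One replaces $L$ by operators $L^k$ that coincide with the heat operator in a $1/k$-neighborhood of $\partial_p Q$ (and with $L$ in the interior) and replaces $f$ by $f^k$ vanishing near $\partial_p Q$; each approximating problem is solved by Perron's method, with barriers built from the Evans--Gariepy Wiener solution of the heat equation in $Q$ together with Krylov's smooth-minimum trick, and $W^{2,1}_{p_0}$ solvability in cylinders. Proposition \ref{UF}(ii) is then legitimately applicable to $u_k$ (data vanishing near the boundary, coefficients bounded there) and gives $|u_k|\le N d^\beta\|f\|_{F(Q)}$ uniformly in $k$; interior $W^{2,1}_{p_0}$ (resp.\ De Giorgi--Nash--Moser and energy) estimates plus Arzel\`a--Ascoli then produce the solution in the limit, with the divergence case handled by mollifying coefficients and rewriting the approximations in non-divergence form. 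If you want to salvage your perturbation scheme, it could plausibly be made to work in the non-divergence case \eqref{ND} (where VMO interior $W^{2,1}_{p_0}$ estimates at scale $d(X)$ do give the weighted bound on $Du$), but for \eqref{D} you would have to abandon feeding $b_iD_iu$ back as $F_{\beta,p_0}$-data, which is the heart of your argument.
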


It is worth noting that from the proofs below the solution $u$ is globally H\"older continuous in $\bar Q$ and, in the divergence case, $D_t(u)=g_0+D_i g_i$ for some $g_0,g_i\in L_{2,\text{loc}}(Q)$.
The corresponding results for elliptic operators are also obtained by following the proofs in this paper.

Here we illustrate the idea in the proof of Theorem \ref{main}. Our proof relies on the growth lemma, from which we deduce an a priori uniform boundary estimate in Proposition~\ref{UF}. To prove the existence result, in the non-divergence case,  first we approximate the operator $L$ by a sequence of operators $L^{k}$ which become the heat operator near the boundary and coincide with the original operator $L$ in the interior of the domain. We then find a sequence of solutions $u_k$ corresponding to the operators $L^k$ by Perron's method, which requires barrier functions and the solvability in cylindrical domains. We construct certain barrier functions by using the result of Evans and Gariepy \cite{EG82} mentioned above and an idea in Krylov \cite{Kr89}. Under Assumption \ref{RA}, the $W^{1,2}_p$ solvability of non-divergence form parabolic equations in cylindrical domains is also available in the literature.
By using the a priori boundary estimate and the interior $W^{1,2}_p$ estimate, we are able to show that along a subsequence $u_k$ converge locally uniformly to a solution $u\in W(Q)$ of the original equation. The divergence case is a bit more involved. We additionally take mollifications of the coefficients and data, and rewrite the approximating equations into non-divergence form equations, for which the solvability has already been proved. We then show the convergence of a subsequence of $u_k$ to a solution $u\in W(Q)$ of the original equation by again using the a priori boundary estimate and the interior De Giorgi--Nash--Moser estimate.

The remaining part of the paper is organized as follows. We present several auxiliary results in the next section including a version of the maximum principle for solutions in $W(Q)$. Section \ref{sec3} is devoted to a growth lemma (Lemma \ref{growth}) and a pointwise estimate (Lemma \ref{pointwise}). In Section \ref{sec4}, we obtain an a priori boundary estimate which is crucial in our argument, and in Section \ref{proofs} we complete the proof of the main results. In the Appendix, we show that any domains satisfying the exterior measure condition also satisfy Wiener's criterion, which is used in the construction of the barrier functions.

\section{Auxiliary results} \label{aux}
This section is devoted to some auxiliary results. First we recall the following standard definition.



\begin{defn} \label{parabolic_boundary}
Let $Q$ be an open set in $\mathbb R  \sp{n+1}$.
The \emph{parabolic boundary}  $\partial_p Q$ of $Q$ is the set of all points
$X_{0}=\left(x_{0},t_{0}\right) \in \partial Q$
such that there exists a continuous function $x=x\left( t\right) $ on an interval $[t_{0},t_{0}+\delta )$
with values in $\mathbb R \sp n$ satisfying $\ x\left( t_{0}\right) =x_{0}$
and $\left( x\left( t\right) ,t\right) \in Q$ for all $t\in \left(t_{0},t_{0}+\delta \right)$.
Here $x=x\left( t\right) $ and $\delta >0$ depend on $X_{0}$.
\end{defn}

We denote $\bar \partial_p Q$ to be the closure of $\partial_p Q$ in $\partial Q$. By the continuity, it is easily seen that the condition (A) is satisfied for any $X\in \bar\partial_p Q$.
The next lemma follows from Lemma 2.3 of \cite{Ch10}, which reads that $\partial Q\setminus \bar\partial_p Q$ is locally flat.

\begin{lemma}
                    \label{lem11}
Let $Q$ be an open set in $\bR^{n+1}$ and $X_0=(x_0,t_0)\in \partial Q\setminus \bar\partial_p Q$. Then there exists $r>0$ such that $B_r(x_0)\times \{t=t_0\}\subset \partial Q\setminus \bar\partial_p Q$, and
$$
B_r(x_0)\times (t_0,t_0+r^2)\subset \bR^{n+1}\setminus \bar Q,\quad B_r(x_0)\times (t_0-r^2,t_0)\subset Q.
$$
\end{lemma}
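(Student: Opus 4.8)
The plan is to reduce the assertion to the local flatness of $\partial Q\setminus\bar\partial_pQ$ furnished by \cite[Lemma 2.3]{Ch10}, and then to decide, by a connectedness argument and the definition of $\partial_pQ$, on which side of the flat piece the domain $Q$ lies.

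\emph{First}, since $\bar\partial_pQ$ is by definition closed in $\partial Q$, the set $\partial Q\setminus\bar\partial_pQ$ is relatively open in $\partial Q$; hence there is an open set $U\subset\bR^{n+1}$ containing $X_0$ with $U\cap\partial Q\subset\partial Q\setminus\bar\partial_pQ$. Applying \cite[Lemma 2.3]{Ch10} at the point $X_0\in\partial Q\setminus\bar\partial_pQ$ and shrinking if necessary, I obtain $r>0$ with $\hat C_r(X_0)=B_r(x_0)\times(t_0-r^2,t_0+r^2)\subset U$ and
\begin{equation} \label{eq11boxid}
\partial Q\cap\bigl(B_r(x_0)\times(t_0-r^2,t_0+r^2)\bigr)=B_r(x_0)\times\{t=t_0\}.
\end{equation}
Here the flat piece of $\partial Q$ is the horizontal disc through $X_0$; this is consistent with, and can in fact be re-derived from, Definition \ref{parabolic_boundary}, since a non-horizontal flat piece would contain boundary points from which one can run a continuous path forward in time into $Q$, placing such points in $\partial_pQ$ and contradicting $X_0\notin\bar\partial_pQ$. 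From \eqref{eq11boxid} we already get $B_r(x_0)\times\{t=t_0\}\subset U\cap\partial Q\subset\partial Q\setminus\bar\partial_pQ$, which is the first assertion of the lemma.

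\emph{Next}, set
\[
P^{+}:=B_r(x_0)\times(t_0,t_0+r^2),\qquad P^{-}:=B_r(x_0)\times(t_0-r^2,t_0).
\]
Each of $P^{+}$ and $P^{-}$ is connected and, by \eqref{eq11boxid}, disjoint from $\partial Q$; since $Q$ and $\bR^{n+1}\setminus\bar Q$ are disjoint open sets whose union is $\bR^{n+1}\setminus\partial Q$, each of $P^{\pm}$ is contained entirely in $Q$ or entirely in $\bR^{n+1}\setminus\bar Q$. To pin this down for $P^{+}$, I use that $X_0\notin\partial_pQ$: testing Definition \ref{parabolic_boundary} with the constant path $x(t)\equiv x_0$ shows that for no $\delta>0$ does $(x_0,t)\in Q$ hold for all $t\in(t_0,t_0+\delta)$, so there is a sequence $t_k\downarrow t_0$ with $(x_0,t_k)\notin Q$. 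For large $k$ we have $(x_0,t_k)\in P^{+}$, and since $P^{+}\cap\partial Q=\emptyset$ this forces $(x_0,t_k)\in\bR^{n+1}\setminus\bar Q$; hence $P^{+}\subset\bR^{n+1}\setminus\bar Q$. For $P^{-}$, since $X_0\in\partial Q\subset\bar Q$ the box $B_r(x_0)\times(t_0-r^2,t_0+r^2)$ contains some $Y\in Q$; in the decomposition of this box as $P^{-}\cup\bigl(B_r(x_0)\times\{t=t_0\}\bigr)\cup P^{+}$ we have $Y\notin B_r(x_0)\times\{t=t_0\}$ (as that set lies in $\partial Q$ by \eqref{eq11boxid}) and $Y\notin P^{+}$ (as $P^{+}\subset\bR^{n+1}\setminus\bar Q$), so $Y\in P^{-}$ and therefore $P^{-}\subset Q$. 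This yields the two remaining inclusions.

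\emph{Main obstacle.} All the real content sits in the local flatness statement \cite[Lemma 2.3]{Ch10}, which I am allowed to invoke; once it is in hand the present lemma is a short exercise in point-set topology. The only places calling for care are recording local flatness precisely in the form \eqref{eq11boxid} — in particular the horizontality of the flat piece at the level $\{t=t_0\}$ — and using Definition \ref{parabolic_boundary}, via the constant path, to determine that $Q$ lies on the lower side $P^{-}$ rather than on the upper side $P^{+}$.
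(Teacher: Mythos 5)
Your proposal is correct and follows essentially the same route as the paper, which proves this lemma simply by citing Lemma 2.3 of \cite{Ch10} (local flatness of $\partial Q\setminus\bar\partial_p Q$); you invoke the same lemma and merely make explicit the routine point-set topology the paper leaves implicit. Your supplementary steps are sound: the connectedness argument for $P^{\pm}$, the constant-path test of Definition \ref{parabolic_boundary} to show $Q$ lies below the flat piece, and the density argument placing $Q$ in $P^{-}$ all check out.
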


For any interior point $X\in Q$, we have the following measure condition for sufficiently large $r$.

\begin{lem}
                \label{lem2.0}
Assume that $Q$ satisfies the measure condition (A) with a constant $\theta_0 \in (0,1)$. Let $X=(x,t)\in Q$ and denote $\rho=d(X)$. Then for any $r\ge 4\rho/\theta_0$, we have
\begin{equation}
                                    \label{eq2.56}
|C_r(X)\setminus Q|>\theta_0 2^{-n-2}|C_r|.
\end{equation}
\end{lem}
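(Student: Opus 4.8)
The plan is to reduce the statement for interior points to the exterior measure condition (A) at boundary points, by choosing a suitable parabolic boundary point near $X$ and comparing cylinders. Since $\rho=d(X)=\dist(X,\partial_p Q)$, there exists (or we can approximate by) a point $X_0=(x_0,t_0)\in\bar\partial_p Q$ with $|X-X_0|=\rho$; by the remark following Definition~\ref{parabolic_boundary}, the condition (A) holds at every point of $\bar\partial_p Q$, so $|C_r(X_0)\setminus Q|>\theta_0|C_r|$ for all $r>0$. The idea is then that for $r$ large compared to $\rho$, the cylinder $C_r(X)$ contains a substantial portion of $C_{r'}(X_0)$ for some comparable radius $r'$, which lets us transfer a definite fraction of the "missing mass" $C_{r'}(X_0)\setminus Q$ into $C_r(X)\setminus Q$.

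More concretely, the first step is to locate a nearly-closest point $X_0=(x_0,t_0)\in\partial_p Q$ with $|X-X_0|\le \rho$ (using that $\bar\partial_p Q$ is closed in $\partial Q$ and that $d(X)$ is the distance to $\partial_p Q$; a limiting argument handles the infimum). Next, since $X_0\in\bar\partial_p Q$, apply (A) at $X_0$ at an appropriately chosen radius. The key geometric observation is that $X_0$ lies within parabolic distance $\rho$ of $X$ and $t_0 \le t$ (because $X_0\in\partial_p Q$ forces the domain to lie "above" $X_0$ in time near $X_0$, while $X\in Q$ with $d(X)=\rho$ means the time-interval structure of the parabolic cylinders is respected), so that the standard parabolic cylinder $C_{r-\rho}(X_0)$, or perhaps $C_{cr}(X_0)$ for a small dimensional constant $c$, is contained in $C_r(X)$ once $r\ge 4\rho/\theta_0$. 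Then
\[
|C_r(X)\setminus Q|\ge |C_{cr}(X_0)\setminus Q|>\theta_0|C_{cr}|=\theta_0 c^{n+2}|C_r|,
\]
and one checks the constant $c$ can be taken so that $c^{n+2}\ge 2^{-n-2}$, or equivalently one tracks the precise comparison to land on the claimed bound $\theta_0 2^{-n-2}|C_r|$.

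The main obstacle I expect is the geometry of standard (backward-in-time) parabolic cylinders: $C_r(X)=B_r(x)\times(t-r^2,t)$ is not symmetric in time, so the inclusion $C_{r'}(X_0)\subset C_r(X)$ requires controlling both the spatial displacement $|x-x_0|\le\rho$ and, crucially, the time displacement $t-t_0\in[0,\rho^2]$ with the correct sign. One must verify that $t_0\le t$ — this should follow because if $X_0\in\partial_pQ$ is a closest point to the interior point $X$, the approaching curve from the definition of $\partial_pQ$ goes forward in time into $Q$, and $X$ being in $Q$ at time $t$ with small $d(X)$ forces $t\ge t_0$ (otherwise a closer parabolic-boundary point would exist below). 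Granting $0\le t-t_0\le\rho^2$ and $|x-x_0|\le\rho$, one has $C_{r-2\rho}(x_0)\times(t_0-(r-2\rho)^2,t_0)$ sitting inside $B_r(x)\times(t-r^2,t)=C_r(X)$ provided $r-2\rho>0$ and $(r-2\rho)^2+\rho^2\le r^2$, both of which are easily arranged when $r\ge 4\rho/\theta_0\ge 4\rho$; then using (A) at $X_0$ with radius $r-2\rho\ge r/2$ gives $|C_r(X)\setminus Q|\ge|C_{r-2\rho}(X_0)\setminus Q|>\theta_0(r-2\rho)^{n+2}\omega_n\cdots\ge\theta_0 2^{-(n+2)}|C_r|$. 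The remaining work — writing $|C_r|=\omega_n r^{n+2}$ and chasing the constant $2^{-n-2}$ through — is routine.
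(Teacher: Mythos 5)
Your reduction works only in the case where the chosen point $X_0=(x_0,t_0)\in\bar\partial_p Q$ with $|X-X_0|\le\rho$ satisfies $t_0\le t$; there your inclusion argument is essentially the paper's first case (the paper uses $C_{r/2}(X_0)\subset C_r(X)$ and condition (A) at $X_0$ to get exactly the constant $\theta_0 2^{-n-2}$). The genuine gap is the claim that $t_0\le t$ must hold. The parabolic distance is symmetric in time while standard cylinders look only backward, so nothing in the definitions prevents the nearest point of $\bar\partial_p Q$ from lying at a strictly later time than $X$ (think of $X$ sitting just below a cavity that opens at a slightly later time); your heuristic ``the approaching curve goes forward in time, so a closer boundary point would exist below'' is not an argument, and the paper does not attempt to exclude this configuration --- it devotes a separate case to it. Note also that if $t_0>t$ your inclusion $C_{r'}(X_0)\subset C_r(X)$ fails, and the natural repair (discarding the slab $B_{r'}(x_0)\times(t,t_0)$, whose measure is at most $|B_{r'}|\rho^2$) only yields a constant strictly smaller than the asserted $\theta_0 2^{-n-2}$. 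A further warning sign: your proof only ever uses $r\ge 4\rho$, never the full hypothesis $r\ge 4\rho/\theta_0$; the factor $1/\theta_0$ is needed precisely in the case you dismiss.

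The paper's treatment of the case $t_0>t$ is as follows. If $C_{r/2}(X)\subset Q$, then $C_{r/4}(X_0)\setminus Q\subset C_{r/4}(X_0)\setminus C_{r/2}(X)$, and since $|x-x_0|\le\rho$ and $0<t_0-t\le\rho^2$ the latter set is contained in a time slab of height $\rho^2$, whose measure is at most $16\rho^2 r^{-2}|C_{r/4}|\le\theta_0^2|C_{r/4}|\le\theta_0|C_{r/4}|$ by the hypothesis $r\ge 4\rho/\theta_0$; this contradicts condition (A) at $X_0$, so $C_{r/2}(X)\setminus Q\ne\emptyset$. Picking $Y_1\in C_{r/2}(X)\setminus Q$ and taking the first point $Y_{\tau^*}$ on the segment from $X$ to $Y_1$ that leaves $Q$ (time strictly decreases along this segment, so $Y_{\tau^*}\in\partial_p Q$ by the definition of the parabolic boundary), one has $C_{r/2}(Y_{\tau^*})\subset C_r(X)$ by convexity, and condition (A) at $Y_{\tau^*}$ gives \eqref{eq2.56} with the stated constant. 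You need an argument of this kind --- some mechanism producing a parabolic boundary point at time at most $t$ within distance about $r/2$ of $X$ --- to close the gap; the claim that the nearest point itself already has this property is unsubstantiated.
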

\begin{proof}
By a scaling argument, without loss of generality we may assume that $\rho=1$. Let $Y_0=(y,s)$ be a point on $\bar\partial_p Q$ such that $d(X,Y_0)=1$. In the case when $s\le t$, we have $C_{r/2}(Y_0)\subset C_r(X)$. By using the condition (A) at $Y_0$,
\begin{equation}
                                        \label{eq3.22}
|C_r(X)\setminus Q|\ge |C_{r/2}(Y_0)\setminus Q|>\theta_0|C_{r/2}|=\theta_0 2^{-n-2}|C_r|,
\end{equation}
which gives \eqref{eq2.56}.

Next we consider the case when $s>t$.
We claim that $C_{r/2}(X)\setminus Q$ is not empty. Otherwise, we would have $C_{r/2}(X)\subset Q$. Since $\rho=1$ and $r\ge 4/\theta_0$,
$$
|C_{r/4}(Y_0)\setminus Q|\le |C_{r/4}(Y_0)\setminus C_{r/2}(X)|\le \theta_0|C_{r/4}|,
$$
which contradicts with the condition (A) at $Y_0$. Now we fix a point $Y_1\in C_{r/2}(X)\setminus Q$. Denote $\{Y_\tau\,|\,Y_\tau=(1-\tau)X+\tau Y_1,\tau\in [0,1]\}$ be the line segment connecting $X$ and $Y_1$. Let $\tau^*$ be the smallest number in $(0,1)$ such that $Y_{\tau^*}\in \bR^{d+1} \setminus Q$. Clearly, we have $Y_{\tau^*}\in \partial_p Q$ and $C_{r/2}(Y_{\tau^*})\subset C_r(X)$. By using the condition (A) at $Y_{\tau^*}$, we obtain \eqref{eq3.22} with $Y_{\tau^*}$ in place of $Y_0$. The lemma is proved.
\end{proof}

In the remaining part of this section, we do not impose the condition (A) on $Q$. The following lemma is useful in approximating $u_{+}$ by smooth functions.

\begin{lemma} \label{lemmaF}
Let $G \in C^{\infty }(\mathbb R )$ and $ u \in W(Q)$.
Then $ v: =G(u)\in W(Q)$. In addition, assume $G',G'' \geq 0$ on $\mathbb R$ and $G(0)=0$.
 Then, for a function $f$ defined in $Q$ such that $f\in L_{p_0,\text{loc}}(Q)$ in the case \eqref{ND},
or $f=f_0-D_if_i$, $f_0 \in L_{1,\text{loc}}(Q)$, $f_i \in L_{2,\text{loc}}(Q)$ in the case \eqref{D},
satisfying $ Lu\leq f$,
we have $L v \leq F$ in $Q$ where
$$
\left.
\begin{aligned}
&F:=F_0-D_iF_i,
\\
&F_0 = G ' (u)f_0 - G '' (u) a_{ij} D_iuD_j u +G''(u)f_iD_iu \in L_{1,\text{loc}}(Q),
\\
&F_i = G'(u)f_i\in L_{2,\text{loc}}(Q),
\end{aligned}
\right\}
\quad
\text{for } \eqref{D}
$$
and
\begin{equation*} 
F:=G ' (u)f - G '' (u)a_{ij} D_iuD_ju \quad \text{ for } \eqref{ND} .
\end{equation*}
In particular, we have $L v\leq 0$ in $ Q$ provided that $L u\leq 0$ in $Q$.
\end{lemma}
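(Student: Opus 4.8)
The plan is to split the argument into two independent parts: (i) the membership $v=G(u)\in W(Q)$, which uses only $G\in C^\infty$; and (ii) the differential inequality $Lv\le F$, which uses $G',G''\ge0$, $G(0)=0$, and the standing sign hypothesis \eqref{sign} on $L$ (i.e.\ \eqref{D-sign_01} in the divergence case and $c_0\ge0$ in the non-divergence case). The observation that makes everything routine is that, since $u\in W(Q)\subset C(\bar Q)$, $u$ is locally bounded, so $G(u),G'(u),G''(u)$ are locally bounded on $Q$; moreover on any subdomain $Q'\Subset_p Q$ the coefficients $b_i,c_i$ are bounded (because $d=d(X)$ is bounded below there) and $c_0\in L_{\infty,\text{loc}}$, so all lower-order coefficients are locally bounded.

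For part (i) I would justify the chain rule by mollification. On a subdomain compactly contained in $Q$ set $u^\epsilon=u\ast\eta_\epsilon$; then $u^\epsilon\to u$ locally uniformly, $D_iu^\epsilon\to D_iu$ in $L_{2,\text{loc}}$ (and, in the case \eqref{ND}, also $D_{ij}u^\epsilon,D_tu^\epsilon$ converge in $L_{p_0,\text{loc}}$), while in the case \eqref{D} one has $D_tu^\epsilon=g_0^\epsilon+D_ig_i^\epsilon$ with $g_0^\epsilon\to g_0$ in $L_{1,\text{loc}}$ and $g_i^\epsilon\to g_i$ in $L_{2,\text{loc}}$. Passing to the limit in the classical chain-rule identities for the smooth functions $G(u^\epsilon)$ yields $D_iv=G'(u)D_iu$, and, in the case \eqref{ND}, $D_tv=G'(u)D_tu$ and $D_{ij}v=G'(u)D_{ij}u+G''(u)D_iuD_ju$; the only non-immediate integrability is $D_iuD_ju\in L_{p_0,\text{loc}}$, which follows from the parabolic Sobolev embedding $W^{2,1}_{p_0}\hookrightarrow W^{1,0}_{2p_0}$ (locally), valid precisely because $p_0>\frac{n+2}{2}$. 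In the case \eqref{D} the same limiting procedure gives $D_tv=h_0+D_ih_i$ with $h_i=G'(u)g_i\in L_{2,\text{loc}}$ and $h_0=G'(u)g_0-G''(u)g_iD_iu\in L_{1,\text{loc}}$; together with $D_iv=G'(u)D_iu\in L_{2,\text{loc}}$ and $v\in L_2(Q)$ (as $v$ is bounded) this gives $v\in H(Q)$, hence $v\in W(Q)$ in both cases.

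For part (ii) introduce $K(s):=sG'(s)-G(s)$, so that $K(0)=0$, $K'(s)=sG''(s)$, and $K\ge0$ everywhere: since $G$ is convex with $G(0)=0$, its tangent line at $s$ lies below the graph, evaluated at $0$ this is $0\ge G(s)-sG'(s)$. In the case \eqref{ND}, the chain rule gives a.e.\ in $Q$
\[
Lv=G'(u)\big(Lu-c_0u\big)+c_0G(u)-G''(u)a_{ij}D_iuD_ju;
\]
using $G'(u)\ge0$ with $Lu\le f$, then $c_0\ge0$ with $K(u)\ge0$, one gets $Lv\le G'(u)f-c_0K(u)-G''(u)a_{ij}D_iuD_ju\le F$, and the choice $f=0$ yields $Lv\le0$ by \eqref{UE}. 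In the case \eqref{D}, I would first extend the defining weak inequality for $u$ (after rewriting $\int -uD_t\phi$ via $D_tu=g_0+D_ig_i$) and the sign condition \eqref{D-sign_01}, by mollifying the test function (which preserves nonnegativity), to all $\phi\ge0$ with compact support in $Q$ satisfying $\phi\in L_\infty$, $D_i\phi\in L_{2,\text{loc}}$. Testing with $\phi=G'(u)\psi$, $\psi\in C_0^\infty(Q)$, $\psi\ge0$, and expanding with the chain rule and the identities $D_iv=G'(u)D_iu$, $uG'(u)=K(u)+v$, $c_iuD_i\phi+c_0u\phi=c_iD_i\!\big(K(u)\psi\big)+c_0K(u)\psi+c_ivD_i\psi+c_0v\psi$, and $\int(-vD_t\psi)\,dX=\int\big((G'(u)g_0-G''(u)g_iD_iu)\psi-G'(u)g_iD_i\psi\big)\,dX$, the inequality for $u$ becomes exactly the weak form of $Lv\le F_0-D_iF_i$ plus the term $\int_Q\big(c_iD_i(K(u)\psi)+c_0K(u)\psi\big)\,dX$; the latter is $\ge0$ by the extended \eqref{D-sign_01} applied to $\varphi=K(u)\psi\ge0$, so it may be dropped, and $f_0=f_i=0$ gives $Lv\le0$.

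The main obstacle I expect is the divergence case: carefully justifying the extension of the weak inequality and of \eqref{D-sign_01} to non-smooth, compactly supported test functions, and then tracking every term produced by the chain rule so that the leftover collapses exactly into the nonnegative expression $\int_Q\big(c_iD_i(K(u)\psi)+c_0K(u)\psi\big)\,dX$. The non-divergence case and the regularity claim $v\in W(Q)$ are comparatively routine once the mollification bookkeeping is in place.
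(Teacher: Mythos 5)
Your proposal is correct and follows essentially the same route as the paper: the chain rule plus the parabolic Sobolev embedding for $v\in W(Q)$, the convexity inequality $G(u)\le uG'(u)$ combined with the sign hypotheses, and, in the divergence case, testing with $G'(u)\psi$ justified by mollification, with the leftover lower-order term absorbed by \eqref{D-sign_01} applied to the nonnegative function $(uG'(u)-G(u))\psi$. The only difference is organizational: the paper peels off the $c_i,c_0$ terms first and then passes to the limit with the mollified test functions $G'(u^{(\varepsilon)})\phi$, whereas you extend the weak formulation to non-smooth test functions before substituting $G'(u)\psi$ — the same argument in a different order.
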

\begin{proof}
Clearly, in both cases $v\in C(\bar Q)$. First we consider the non-divergence case. We have
$$
D_i v=G'(u)D_i u,\quad v_t=G'(u)u_t,
\quad D_{ij}v=G'(u)D_{ij}u+G''(u)D_iuD_j u.
$$
Since $G'(u), G''(u)\in C(\bar Q)$, we get $D_i v,v_t\in L_{p_0,\text{loc}}(Q)$.
By the parabolic Sobolev embedding theorem (see, for instance, \cite[Chap. II]{LSU}), $D_i u\in L_{p,\text{loc}}(Q)$ with
$$
p=p_0 (n+2)/(n+2-p_0)>2p_0,
$$
which implies that $D_{ij}v\in L_{p_0,\text{loc}}(Q)$. Therefore, $v\in W(Q)$.
Since $G'\ge 0$ and $c_0\ge 0$, we get
$$
Lv=LG(u)=G'(u)Lu-G''(u)a_{ij} D_iuD_j u +c_0(G(u)-G'(u)u)\le F\quad\text{in}\,\,Q,
$$
where we have used the simple inequality
\begin{equation}
                                \label{eq1.57}
G(u)\le G'(u)u
\end{equation}
because $G(0)=0$ and $G$ is convex.

In the divergence case, by the definition of the space $H(Q)$, we have $D_i v=G'(u)D_i u\in L_{2,\text{loc}}(Q)$ and
$$
D_t (v) =G'(u)u_t=G'(u)(g_0+D_ig_i)
$$
provided that  $ D_t (u)   = g_0 + D_i g_i$ for some $g_0\in L_{1,\text{loc}}(Q)$ and $g_i\in L_{2,\text{loc}}(Q)$.
Then
$$
D_t (v)=\tilde g_0+D_i\tilde g_i,
$$
where
$$
\tilde g_0=G'(u)g_0-G''(u) g_iD_iu,\quad \tilde g_i=G'(u)g_i.
$$
It is easily seen that $\tilde g_0\in L_{1,\text{loc}}(Q)$ and $\tilde g_i\in L_{2,\text{loc}}(Q)$. Therefore, $v\in W(Q)$. To show the desired inequality, it suffices to prove that for any positive $\phi\in C_0^\infty(Q)$,
\begin{multline*}
\int_Q  - G(u) \phi_t + G'(u)\big(a_{ij} D_i u D_j \phi+ b_i  D_i u \phi\big)+c_iG(u)D_i\phi\\
+c_0G(u)\phi-F_0 \phi - F_i D_i \phi \, d X\le 0.
\end{multline*}
From \eqref{D-sign_01} and \eqref{eq1.57}, we have
\begin{multline*}
\int_Q c_iG(u)D_i\phi+c_0G(u)\phi\,dX=\int_Q c_iD_i(G(u)\phi)-c_iG'(u)\phi D_i u+c_0G(u)\phi\,dX\\
\le \int_Q c_iD_i(G'(u)u\phi)-c_i G'(u)\phi D_i u+c_0G'(u)u\phi\,dX\\
=\int_Q c_iuD_i(G'(u)\phi)+c_0G'(u)u\phi\,dX.
\end{multline*}
Therefore, we only need to show that
\begin{equation}
                                    \label{eq4.32}
\int_Q   -G(u) \phi_t + a_{ij}D_iuD_j\psi+ b_i \psi D_i u +c_i uD_i\psi+c_0u\psi -  f_0\psi - f_iD_i\psi \, d X\le 0,
\end{equation}
where $\psi:=G'(u) \phi$. We use a standard mollification argument.  Define $\psi^\varepsilon=G'(u^{(\varepsilon)}) \phi$, where $u^{(\varepsilon)}$ is the standard mollification of $u$. By the definition of a weak solution, \begin{equation}
							\label{eq0807_1}
\int_Q   -u D_t \psi^{\varepsilon} + a_{ij}D_iuD_j\psi^{\varepsilon}+ b_i \psi^{\varepsilon} D_i u +c_i uD_i\psi^{\varepsilon}+c_0u\psi^{\varepsilon} -  f_0\psi^{\varepsilon} - f_iD_i\psi^{\varepsilon} \, d X\le 0.
\end{equation}
Let $Q'=\text{supp} \psi \Subset_p Q$. Since $u^{(\varepsilon)}\to u$ in $C(\bar Q')$ and $Du^{(\varepsilon)}\to Du$ in $L_2(Q')$, we have
$$
\psi^\varepsilon\to \psi\quad\text{in}\,\,C(\bar Q'),\quad
D\psi^\varepsilon\to D\psi\quad \text{in}\,\,L_2(Q').
$$
From this together with $u\in W(Q)$, we see that the left-hand side of \eqref{eq0807_1}
converges to that of \eqref{eq4.32} as $\varepsilon \to 0$.
In particular,
\begin{align*}
&- \int_Q u D_t \psi^{\varepsilon} \, dX
= \int_Q g_0 \psi^{\varepsilon} \, dX - \int_Q g_i D_i \psi^{\varepsilon} \, dX\\
&= \int_Q \left( G'(u^{(\varepsilon)}) g_0  -  G''(u^{(\varepsilon)}) g_i D_i u^{(\varepsilon)} \right) \phi \, dX
- \int_Q  G'(u^{(\varepsilon)}) g_i D_i \phi \, dX\\
&\to \int_Q \left(\tilde{g}_0 \phi - \tilde{g}_i D_i \phi\right) \, dX
= - \int_Q G(u) \phi_t \, dX.
\end{align*}
This completes the proof of \eqref{eq4.32}. The second assertion follows from the first one by taking $f=0$ and using $G''\ge 0$ and the ellipticity condition.
The lemma is proved.
\end{proof}

The following lemma allows us to reduce our consideration to functions
defined on a standard cylinder $C_{r}(X_{0})$
rather than on a general open set $Q\subset \mathbb R \sp{n+1}$. For operators without lower-order terms, a similar result is claimed in Theorem 2.6 of \cite{Ch10}, the proof of which, however, contains a flaw.

\begin{lemma} \label{ext}
Let $Q$ be an open set in $\mathbb R \sp{n+1}$ and $u \in W(Q)$ satisfy $Lu\leq 0$ in $Q$ with an operator $L$
in the form \eqref{ND} or \eqref{D}.
Suppose $u\leq 0$ on $\bar C_r  \cap \bar\partial_p  Q$,
where $C_r:=C_r(X_{0})$, $X_{0}\in Q$.
Then for any $\varepsilon >0$, there exists a function $u_{\varepsilon}\in W(C_{r})$ which vanishes in a neighborhood of $\bar C_r \cap \partial Q$ and satisfies
\begin{equation*}
u_{\varepsilon }\geq 0, \;
Lu_{\varepsilon }\leq 0\quad \text{in}\,\,C_r,\quad
u_{\varepsilon }\equiv 0\quad \text{in}\,\,C_{r}\setminus Q,
\end{equation*}
and
\begin{equation*} 
(u-\varepsilon )_{+}(X_{0})\leq u_{\varepsilon }(X_{0}),
\quad
u_\varepsilon  \le u_+  \quad \text{in} \,\, Q.
\end{equation*}
\end{lemma}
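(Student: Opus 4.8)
The plan is to build $u_\varepsilon$ by first truncating $u$ to make room near $\partial Q$, then extending by zero outside $Q$, and finally smoothing the result so that it lives in $W(C_r)$. First I would introduce a smooth convex approximation of $(\,\cdot\,-\varepsilon)_+$: pick $G=G_\delta\in C^\infty(\bR)$ with $G,G',G''\ge 0$, $G(0)=0$, $G(s)=0$ for $s\le \varepsilon/2$, and $(s-\varepsilon)_+\le G(s)\le (s-\varepsilon/2)_+$ on $\bR$ (one gets such a $G$ by mollifying $(s-3\varepsilon/4)_+$ at scale $\varepsilon/8$). By Lemma \ref{lemmaF}, $w:=G(u)\in W(Q)$ and $Lw\le 0$ in $Q$; moreover $0\le w\le u_+$ in $Q$, and since $u\le 0$ on $\bar C_r\cap\bar\partial_p Q$ we have, by continuity of $u$ on $\bar Q$, that $w\equiv 0$ in a neighborhood (relative to $\bar Q$) of $\bar C_r\cap\bar\partial_p Q$. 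Also $(u-\varepsilon)_+(X_0)\le w(X_0)$.

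Next I would extend $w$ by zero. Define $\tilde w = w$ on $\bar C_r\cap Q$ and $\tilde w=0$ on $C_r\setminus Q$. The point of the truncation is that $w$ vanishes near $\bar C_r\cap\bar\partial_p Q$, so the ``bad'' part of $\partial Q$ inside $C_r$ where $\tilde w$ could jump is contained in $\partial Q\setminus\bar\partial_p Q$. By Lemma \ref{lem11}, near each such point $\partial Q$ is a piece of a horizontal hyperplane $\{t=t_0\}$ with $Q$ lying on the side $t<t_0$; this is exactly the geometry under which extension by zero across $\{t=t_0\}$ preserves membership in $W^{2,1}_{p_0,\text{loc}}$ (resp. in $H$) and preserves the differential inequality $L\tilde w\le 0$, because crossing such a face only involves the time derivative hitting a function that is zero on the far side — no spurious surface measure appears from the elliptic part since $\tilde w$ and its spatial gradient vanish there. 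So $\tilde w\in W(\cO)$ with $L\tilde w\le 0$, where $\cO$ is a neighborhood of $\bar C_r$ minus the (closed) set $\bar C_r\cap\bar\partial_p Q$ near which $\tilde w\equiv 0$ anyway; a partition-of-unity patching of the local extensions of Lemma \ref{lem11} with the trivial extension in the region where $w\equiv 0$ gives a globally defined $\tilde w\in W(C_r)$, $\tilde w\ge 0$, $\tilde w\le u_+$ in $Q$, $\tilde w\equiv 0$ in $C_r\setminus Q$, and $\tilde w(X_0)\ge (u-\varepsilon)_+(X_0)$.

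Finally, to get a function that vanishes in a full neighborhood of $\bar C_r\cap\partial Q$ (not merely in $C_r\setminus Q$), I would compose once more with a convex cutoff of the same type at level $\varepsilon/4$: since $\tilde w$ is continuous on $\overline{C_r}$ and equals $0$ on $\bar C_r\cap\partial Q$, the open set $\{\tilde w<\varepsilon/4\}$ is a neighborhood of $\bar C_r\cap\partial Q$; replacing $\tilde w$ by $\bar G(\tilde w)$ with $\bar G(s)=0$ for $s\le\varepsilon/4$, convex, $\bar G(s)\le s$, keeps $L(\bar G(\tilde w))\le0$ by Lemma \ref{lemmaF}, keeps $0\le \bar G(\tilde w)\le \tilde w\le u_+$, and now $\bar G(\tilde w)\equiv 0$ near $\bar C_r\cap\partial Q$; choosing $\varepsilon$ in the statement appropriately (or rescaling the construction), one still has $\bar G(\tilde w)(X_0)\ge (u-\varepsilon)_+(X_0)$ after relabeling. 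Setting $u_\varepsilon:=\bar G(\tilde w)$ finishes the proof.

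\medskip

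\textbf{Main obstacle.} The only delicate point is the zero-extension step: verifying that $\tilde w$ genuinely belongs to $W(C_r)$ and that $L\tilde w\le 0$ survives across $\partial Q\cap C_r$. Away from $\bar\partial_p Q$ this is handled by the flat geometry of Lemma \ref{lem11}; the role of the initial truncation $w=G(u)$ is precisely to confine any possible loss of regularity to that flat part of the boundary, where extension by zero is harmless (this is also where the flawed proof of \cite[Theorem 2.6]{Ch10} presumably went wrong). Everything else — the properties of $G$, the bound $u_\varepsilon\le u_+$, and the value at $X_0$ — is routine.
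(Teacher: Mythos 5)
Your first step (the convex truncation $w=G_\varepsilon(u)$ via Lemma \ref{lemmaF}) is exactly the paper's, but there is a genuine gap at the zero-extension step. You assume that after this truncation the only boundary pieces where anything can go wrong are the flat faces of $\partial Q\setminus\bar\partial_p Q$, and that extending $w$ by zero across such a face $S_\alpha\times\{t=t_\alpha\}$ is harmless because ``$\tilde w$ and its spatial gradient vanish there.'' But the hypothesis of the lemma controls $u$ only on $\bar C_r\cap\bar\partial_p Q$; on the non-parabolic flat pieces inside $C_r$ there is no control whatsoever, and in general $w=G_\varepsilon(u)$ tends to a strictly positive limit as $t\uparrow t_\alpha$ (the positivity can come from portions of $\partial_p Q$ lying outside $\bar C_r$). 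Consequently $\tilde w$ has a jump across $\{t=t_\alpha\}$: it is not continuous on $\bar C_r$, its distributional time derivative contains a surface measure on $\{t=t_\alpha\}$ which cannot be written as $g_0+D_ig_i$ with $g_0\in L_{1,\text{loc}}$, $g_i\in L_{2,\text{loc}}$ (nor does $D_t\tilde w$ lie in $L_{p_0,\text{loc}}$ in the non-divergence case), so $\tilde w\notin W(C_r)$. (The jump happens to have the favorable sign for the distributional inequality $L\tilde w\le 0$, but membership in $W(C_r)$ and continuity are what the lemma asserts and what the growth lemma later needs.) This also invalidates your final step: the composition $\bar G(\tilde w)$ via Lemma \ref{lemmaF} requires $\tilde w\in W(C_r)$, and the claim ``$\tilde w$ is continuous on $\overline{C_r}$ and equals $0$ on $\bar C_r\cap\partial Q$'' is precisely the unproved (and generally false) assertion; this is essentially the flaw in \cite[Theorem 2.6]{Ch10} that the paper points out.

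The missing idea, which is the heart of the paper's proof, is to force $v_\varepsilon=G_\varepsilon(u)$ to vanish \emph{before} reaching the flat faces, and to do so without destroying the subsolution property. Using Lemma \ref{lem11} and compactness, one covers the relevant part of $\partial Q\setminus\bar\partial_p Q$ by finitely many flat pieces $S_{\alpha_k}\times\{t=t_{\alpha_k}\}$ and multiplies $v_\varepsilon$ by a cutoff $\eta$ which, in thin strips $S^\delta_{\alpha_k}\times(t_{\alpha_k}-\delta_1^2,t_{\alpha_k})$, depends only on $t$, is non-increasing in $t$, and vanishes on the upper half of each strip, while $\eta\equiv 1$ elsewhere (in particular at $X_0$); where $\eta$ is not a function of $t$ alone one is within distance $\delta$ of $\bar\partial_p Q$, where $v_\varepsilon\equiv 0$. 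Since $D\eta=0$ and $D_t\eta\le 0$ on $\{v_\varepsilon>0\}$, one gets $L(v_\varepsilon\eta)=\eta Lv_\varepsilon+v_\varepsilon D_t\eta\le 0$, the product vanishes in a genuine neighborhood of $\bar C_r\cap\partial Q$, and only then is the extension by zero to $C_r\setminus Q$ trivial. Your outline skips exactly this cutoff-in-time construction, so as written it does not prove the lemma.
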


\begin{proof}
The idea of the proof is to modify $u$ so that it vanishes near $\bar C_r\cap \partial Q$.
For $\varepsilon>0$, we choose a convex non-decreasing nonnegative function $G_\varepsilon\in C^\infty(\bR)$ such that $(s-\varepsilon)_+\le G_\varepsilon(s)\le (s-\varepsilon/2)_+$ on $\bR$.
We first modify $u$ near $\bar C_r\cap \bar\partial_p Q$. By Lemma \ref{lemmaF}, we have $v_\varepsilon:=G_\varepsilon(u)\in W(Q)$ and it satisfies
$$
v_\varepsilon\ge 0,\quad Lv_\varepsilon\le 0,\quad (u-\varepsilon)_+\le v_\varepsilon\le (u-\varepsilon/2)_+\quad\text{in}\,\, Q.
$$
Since $u\le 0$ on $\bar C_r\cap \bar\partial_p Q$, $v_\varepsilon$ vanishes in a neighborhood of $\bar C_r\cap \bar\partial_p Q$ in $\bar Q$. Now we modify $u$ near $\bar C_r\cap \big(\partial Q\setminus \bar\partial_p Q\big)$. Thanks to Lemma \ref{lem11},
$$
\partial Q\setminus \bar\partial_p Q=\bigcup_{\alpha\in \cA}S_\alpha\times \{t=t_\alpha\},
$$
where $\cA$ is an index set. Here, for each $\alpha\in \cA$, $S_\alpha$ is an open set in $\bR^n$, $t_\alpha\in \bR$, and
$$
\partial S_\alpha\times \{t=t_\alpha\}\subset \bar\partial_p Q.
$$
In fact, $\cA$ is at most countable (see Remark \ref{rem1119}). However, we will not use this in the proof below.
Since $u$ is uniformly continuous in $\bar Q$ and $u\le 0$ on $\bar C_r\cap \bar\partial_p Q$, we can find $\delta>0$ such that $u\le \varepsilon/2$ in $\{X\in \bar C_r\cap \bar Q\,|\,\dist(X,\bar C_r\cap \bar \partial_p Q)<\delta\}$. Clearly, the set
$$
\{X\in \bar C_r\cap \partial Q\,|\,\dist(X,\bar C_r\cap \bar \partial_p Q)\ge \delta/2\}\subset \partial Q\setminus \bar \partial_p Q
$$ is compact, which has a finite covering by $S_{\alpha_k}\times \{t=t_{\alpha_k}\}$, where $\alpha_1,\ldots,\alpha_M\in \cA$ and $M\in \bN$. Denote
$$
S^\delta_{\alpha}=\big\{X\in S_\alpha\times \{t=t_\alpha\}\cap \bar C_r\,|\,\dist(X,\bar C_r\cap \bar \partial_p Q)\ge \delta/2\big\}.
$$
By Lemma \ref{lem11}, there is a small constant $\delta_1\in (0,\delta/2)$ such that
$$
S^\delta_{\alpha_k}\times [t_{\alpha_k}-\delta_1^2,t_{\alpha_k})\subset \bar C_r \cap Q,\quad k=1,\ldots,M,
$$
and these sets do not intersect each other. We choose a smooth function $\eta=\eta(x,t)$ in $\bar C_r\cap Q$ satisfying the following three properties:\\
(i) $0\le \eta\le 1$ in $\bar C_r\cap Q$; \\
(ii) For each $k=1,\dots,M$, in $S^\delta_{\alpha_k}\times (t_{\alpha_k}-\delta_1^2,t_{\alpha_k})$ the function $\eta$ is independent of $x$, non-increasing in $t$, and $\eta=0$ in $S^\delta_{\alpha_k}\times (t_{\alpha_k}-\delta_1^2/2,t_{\alpha_k})$;\\
(iii) $\eta=1$ in $\bar C_r\cap Q\setminus \bigcup_{k=1}^M \big(S_{\alpha_k}\times (t_{\alpha_k}-\delta_1^2,t_{\alpha_k})\big)$, which contains $X_0$.\\
Observe that $D_t\eta\le 0$ and $D\eta=0$ in
$$
\{X\in \bar C_r\cap  Q\,|\,\dist(X,\bar C_r\cap \bar \partial_p Q) \ge \delta\}\supset \{X\in \bar C_r\cap Q\,|\,v_\varepsilon(X)> 0\}.
$$
Consequently, $u_\varepsilon:=v_\varepsilon \eta$ satisfies $Lu_\varepsilon=\eta Lv_\varepsilon+v_\varepsilon D_t \eta\le 0$ in $\bar C_r\cap Q$. Noting that $u_\varepsilon$ vanishes in a neighborhood of $\bar C_r\cap \partial Q$ in $\bar Q$, we can extend $u_\varepsilon$ to be zero in $C_r\setminus Q$. It is now straightforward to check that $u_\varepsilon$ satisfies all the properties in the lemma.
\end{proof}

\begin{rem}
							\label{rem1119}
One example of space-time domains with infinitely many flat portions of the non-parabolic boundary can be obtained by connecting a sequence of shrinking cubes by triangular prisms as in Figure \ref{fig01} infinitely many times.
Note that $\partial Q\setminus \bar\partial_p Q$ is a countable union of the top surfaces of these cubes and the domain satisfies the exterior measure condition.
\begin{figure}[h]
\begin{center}
\includegraphics[scale=0.35]{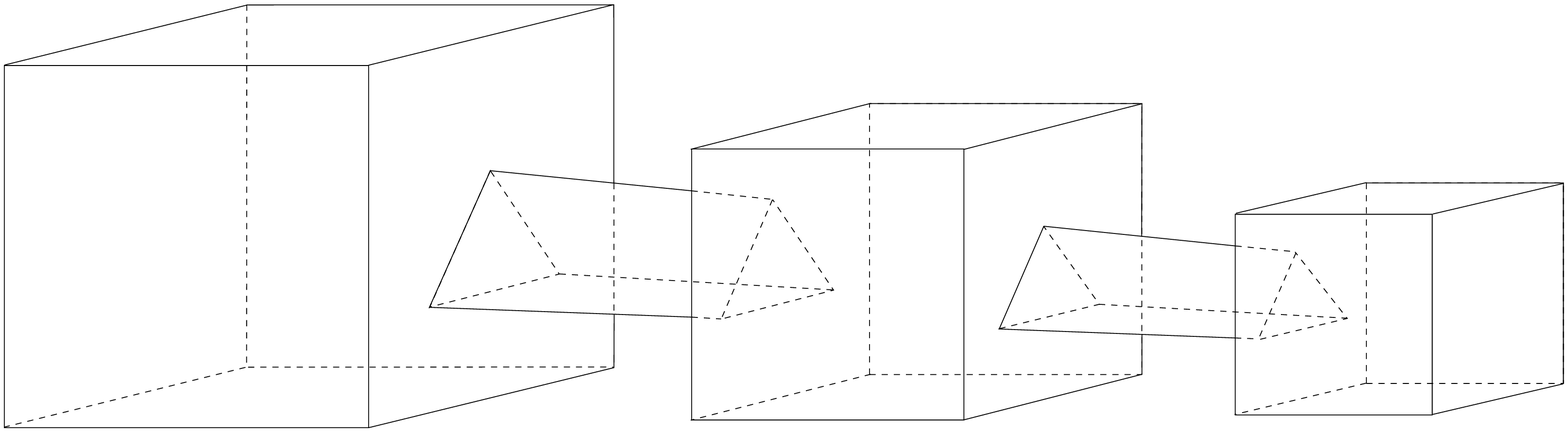}
\end{center}
\caption{}\label{fig01}
\end{figure}
\end{rem}

Finally, we prove a version of the maximum principle for solutions in $W(Q)$.

\begin{lem}[Maximum principle] \label{max}
Let $u \in W(Q )$ and
 $Lu \le 0$ in $Q$, where $L$ is   a uniformly parabolic operator   in either divergence \eqref{D}
or non-divergence form \eqref{ND}
satisfying \eqref{sign} with locally bounded lower order coefficients (i.e., bounded on sets $Q' \Subset_p Q$).
For the non-divergence case, we further assume that the coefficients $a_{ij}$ satisfy Assumption \ref{RA}.
Then
\begin{equation}
                            \label{eq11.45}
\sup_Q u
\le
\sup_{\partial_p Q}
u\vee 0.
\end{equation}
Similarly, if $Lu \ge 0$, then
$$
\inf_Q u \ge
\inf_{\partial_p Q}
u \wedge 0.
$$
\end{lem}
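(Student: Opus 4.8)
The plan is to use Lemma~\ref{ext} to transplant the problem from the (possibly very irregular, time-varying) domain $Q$ onto a single standard parabolic cylinder, where the maximum principle is available: classical for weak solutions in the divergence case and, under Assumption~\ref{RA}, part of the known $W^{1,2}_{p_0}$ theory in the non-divergence case.

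\emph{Step 1 (reduction).} It suffices to prove: if $u\in W(Q)$, $Lu\le 0$ in $Q$, and $u\le 0$ on $\partial_p Q$, then $u\le 0$ in $Q$. Indeed, set $M:=\big(\sup_{\partial_p Q}u\big)\vee 0$, which is finite (since $u\in C(\bar Q)$ and $Q$ is bounded) and nonnegative. Then $v:=u-M\in W(Q)$ (constants lie in $W(Q)$), $v\le 0$ on $\partial_p Q$, and, because $M\ge 0$ and \eqref{sign} holds, $Lv=Lu-M\,L1\le 0$ in the appropriate (a.e., resp.\ weak) sense; applying the reduced statement to $v$ gives \eqref{eq11.45}. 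The last assertion of the lemma follows by applying the first part to $-u$, noting $-u\in W(Q)$, $L(-u)\le 0$ when $Lu\ge 0$, and that $L$ still satisfies \eqref{sign}. So from now on assume $u\le 0$ on $\partial_p Q$, hence, by continuity of $u$ on $\bar Q$, on $\bar\partial_p Q$.

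\emph{Step 2 (localization and the cylinder maximum principle).} Fix $X_0=(x_0,t_0)\in Q$ and $\varepsilon>0$. Since $Q$ is bounded, choose $r>0$ so large that $r>\sup_{(x,t)\in\bar Q}|x-x_0|$ and $r^2> t_0-\inf_{(x,t)\in\bar Q}t$; then the parabolic boundary of $C_r:=C_r(X_0)$ (made up of the lateral surface and the bottom slice $\{t=t_0-r^2\}$, but \emph{not} the terminal face $\{t=t_0\}$, where $C_r$ may meet $\bar Q$) satisfies $\partial_p C_r\cap\bar Q=\emptyset$. Since $u\le 0$ on $\bar C_r\cap\bar\partial_p Q$, Lemma~\ref{ext} yields $u_\varepsilon\in W(C_r)$ with $u_\varepsilon\ge 0$, $Lu_\varepsilon\le 0$ in $C_r$, $u_\varepsilon\equiv 0$ in $C_r\setminus Q$ and near $\bar C_r\cap\partial Q$, and $(u-\varepsilon)_+(X_0)\le u_\varepsilon(X_0)$. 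Because $\partial_p C_r\subset\bar C_r\setminus\bar Q$ and $u_\varepsilon$ is continuous on $\bar C_r$ and vanishes on $C_r\setminus Q$, it vanishes on all of $\bar C_r\setminus\bar Q$, in particular on $\partial_p C_r$; moreover $\{u_\varepsilon\neq 0\}\subset\{u>0\}$ has closure in $\bar C_r$ contained in $Q$ at positive distance from $\bar\partial_p Q$, so the lower-order coefficients appearing in $Lu_\varepsilon\le 0$ are bounded on the relevant set. The maximum principle in the standard cylinder $C_r$ (classical in the divergence case; under Assumption~\ref{RA}, available for $W^{1,2}_{p_0}$ solutions in the non-divergence case), together with $u_\varepsilon=0$ on $\partial_p C_r$ and $L1\ge 0$, gives $\sup_{C_r}u_\varepsilon\le 0$; as $u_\varepsilon\ge 0$, this forces $u_\varepsilon\equiv 0$ in $C_r$, so $u_\varepsilon(X_0)=0$ by continuity, whence $u(X_0)\le\varepsilon$. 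Letting $\varepsilon\downarrow 0$ and then varying $X_0\in Q$ gives $u\le 0$ in $Q$.

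\emph{Main obstacle.} There is no essential difficulty beyond two bookkeeping points: (i) verifying that shifting by $M$ keeps $L$ a subsolution, which is precisely where \eqref{sign} and $M\ge 0$ are used; and (ii) choosing the cylinder $C_r$ so that $u_\varepsilon$ vanishes on the \emph{entire} parabolic boundary $\partial_p C_r$, which is where the boundedness of $Q$ and the causal (backward-in-time) shape of $C_r(X_0)$ enter. The genuinely nontrivial ingredient, Lemma~\ref{ext}, is already in hand, and the cylinder maximum principle is standard in both settings, so the proof is essentially a matter of assembling these pieces.
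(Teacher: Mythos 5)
Your argument is correct in outline, but it takes a different route from the paper's, and one step you label ``standard'' is in fact the paper's main technical point. The paper treats the divergence case by directly citing the classical weak maximum principle on general domains, and in the non-divergence case it never uses Lemma~\ref{ext}: it first proves the maximum principle on a smooth bounded cylinder with \emph{bounded} coefficients for $W^{2,1}_{p_0}$ subsolutions, by adapting Krylov's $W^2_p$-based argument (Theorem 11.8.1 of his book) with the VMO $W^{2,1}_p$ estimates of \cite{DK09,DY10}, and then reduces the general case to this one by a contradiction argument: perturb $u$ by $-\delta(t-t_0)$, locate an interior maximum point $X_0\in\bar Q\setminus\bar\partial_p Q$ (using Lemma~\ref{lem11} to handle points of $\partial Q\setminus\bar\partial_p Q$), and compare $v+\varepsilon\eta$ with the constant $M$ on a small cylinder $C_r(X_0)\Subset_p Q$, where $u$ is globally $W^{2,1}_{p_0}$ and the lower-order coefficients are bounded. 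Your route instead transplants the problem via Lemma~\ref{ext} onto one large cylinder $C_r$ with $\partial_p C_r\cap\bar Q=\emptyset$; this is a legitimate alternative and has the merit of treating both cases uniformly, at the price of invoking Lemma~\ref{ext} even in the divergence case where the paper needs nothing.

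The caveat is your appeal to ``the maximum principle in the standard cylinder, available for $W^{1,2}_{p_0}$ solutions in the non-divergence case.'' For $p_0\in(\tfrac{n+2}{2},n+1)$ this is \emph{not} a textbook fact (the parabolic Aleksandrov--Bakelman--Pucci estimate requires $p\ge n+1$); it is exactly the nontrivial cylinder case that the paper establishes from the VMO theory, so you cannot cite it as standard without at least indicating that derivation. Fortunately your construction does reduce to precisely that statement: since $u_\varepsilon$ vanishes on a full neighborhood of $\partial_p C_r$ (because $\dist(\bar Q,\partial_p C_r)>0$) and its support is a compact subset of $Q$ at positive distance from $\bar\partial_p Q$, one has $u_\varepsilon\in W^{2,1}_{p_0}(C_r)$ globally, and after redefining $b_i$, $c_i$, $c_0$ to vanish off a neighborhood of that support one obtains an operator on $C_r$ with bounded lower-order coefficients still satisfying \eqref{sign}, of which $u_\varepsilon$ remains a subsolution. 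You should make this coefficient modification explicit (it is what turns ``bounded on the relevant set'' into hypotheses a cylinder maximum principle can actually use), and then justify the cylinder maximum principle itself either by the Krylov-type argument the paper indicates or by reference to it; with those two points supplied, your proof is complete and parallels the paper's in its essential ingredient while packaging the localization differently.
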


\begin{proof}
In the divergence case, this is classical. See, for instance, \cite[Sec. 6.7]{Li}.

Next we treat the non-divergence case.
It suffices to prove \eqref{eq11.45}. Due to \eqref{sign}, we may assume $\sup_{\partial_p Q}u\le 0$.
We first consider the special case when $Q=\Omega\times (0,T)$ is a cylindrical domain, $\Omega$ is a bounded $C^{1,1}$ domain in $\bR^n$, and the coefficients are all bounded. In this case, similar estimates can be found in \cite[Theorem 11.8.1]{Kr08} for elliptic operators with continuous coefficients.
The proof there is based on the $W^2_p$ estimate for elliptic operators.
Since the $W^{2,1}_p$ estimate for parabolic operators in cylindrical domains with VMO coefficients is available (cf. \cite{DY10} and \cite{DK09}), by the same argument one can derive the maximum principle for $L$. It is standard to extend to the general case by a contradiction argument. Suppose that $\sup_Q u>0$. Since $Q$ is bounded, we may assume that $Q\subset \{t>t_0\}$ for some $t_0\in \bR$. Then for $\delta>0$ sufficiently small, $v:=u-\delta(t-t_0)$ attains its maximum $M>0$ at a point $X_0\in \bar Q\setminus \bar\partial_p Q$ and satisfies $Lv\le -\delta$ in $Q$. Take a small $r>0$ such that  $C_r(X_0)\Subset_p Q$ and a smooth function $\eta$ such that $\eta(X_0)=1$ and $\eta=0$ on $\partial_p C_r(X_0)$. It is easily seen that for sufficiently small $\varepsilon>0$, we have
$$
L(v+\varepsilon \eta)\le 0\quad \text{in}\quad C_r(X_0),\quad
v+\varepsilon \eta\le M \quad\text{on}\quad \partial_p C_r(X_0).
$$
By the maximum principle in cylindrical domains proved above, we have
$$
M+\varepsilon=v(X_0)+\varepsilon \eta(X_0)\le M;
$$
a contradiction. The lemma is proved.
\end{proof}

\begin{cor}[Comparison principle] \label{comp}
Let $u,v \in W(Q)$ and,
under the same assumptions  on $L$ as in Lemma \ref{max},
$Lu \ge Lv$ in $Q$,
$u \ge v $ on $\partial_p Q$.
Then $u \ge v $ in $Q$.
\end{cor}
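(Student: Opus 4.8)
The plan is to deduce the comparison principle directly from the maximum principle, Lemma \ref{max}, applied to the difference $w:=v-u$. First I would note that $w\in W(Q)$: in the non-divergence case $W^{2,1}_{p_0,\text{loc}}(Q)\cap C(\bar Q)$ is a linear space, and in the divergence case the space $H(Q)$ is defined so that if $D_t(u)=g_0^u+D_ig_i^u$ and $D_t(v)=g_0^v+D_ig_i^v$ with $g_0^\bullet\in L_{1,\text{loc}}(Q)$, $g_i^\bullet\in L_{2,\text{loc}}(Q)$, then $D_t(w)=(g_0^v-g_0^u)+D_i(g_i^v-g_i^u)$ has the same structure; also $w\in C(\bar Q)$. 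Hence $w\in W(Q)$, and the standing hypotheses on $L$ (uniform parabolicity, \eqref{sign}, local boundedness of the lower-order coefficients, and Assumption \ref{RA} in the case \eqref{ND}) are exactly those under which Lemma \ref{max} is stated; they do not involve $u$ or $v$, so nothing changes.

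Next I would verify the two hypotheses of Lemma \ref{max} for $w$. Since $L$ is linear, $Lw=Lv-Lu\le 0$ in $Q$. In the non-divergence case this holds a.e.\ in $Q$; in the divergence case, where $Lu\ge Lv$ means that for every nonnegative $\phi\in C_0^\infty(Q)$
\[
\int_Q\bigl(-uD_t\phi+a_{ij}D_iu\,D_j\phi+b_iD_iu\,\phi+c_iuD_i\phi+c_0u\phi\bigr)\,dX\ \ge\ \int_Q\bigl(\text{same with }v\bigr)\,dX,
\]
subtracting the two inequalities against the same fixed $\phi$ gives the weak inequality $Lw\le 0$. On the parabolic boundary, $w=v-u\le 0$ on $\partial_p Q$ by assumption.

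Finally, applying Lemma \ref{max} to $w$ yields
\[
\sup_Q w\ \le\ \Bigl(\sup_{\partial_p Q}w\Bigr)\vee 0\ \le\ 0,
\]
since $\sup_{\partial_p Q}w\le 0$. Therefore $w\le 0$ in $Q$, i.e.\ $u\ge v$ in $Q$, which is the claim. I do not anticipate any genuine obstacle here: the only points needing a line of care are the closure of $W(Q)$ under differences and the (routine) passage from $Lu\ge Lv$ to $Lw\le 0$ in the weak formulation, both of which are immediate from linearity of $L$ and the definitions.
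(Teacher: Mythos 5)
Your proposal is correct and is essentially the paper's proof: the paper likewise applies Lemma \ref{max} to the difference of the two functions (there stated as applying it to $u-v$, which is the mirror image of your $w=v-u$), and the linearity/weak-formulation details you spell out are exactly the routine verifications implicit in that one-line argument.
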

\begin{proof}
We apply Lemma \ref{max} to $u-v$.
\end{proof}

By the comparison principle, it is immediate to see that a solution to \eqref{DP} is unique
if it exists.

\section{Growth lemma and pointwise estimates}   \label{sec3}

The following \emph{growth lemma} is
in the spirit of the book by Landis \cite[Section 1.4]{La}.
In one form or another, the growth lemma was used in the proofs of Harnack inequalities for solutions to elliptic and parabolic equations. See, for instance, \cite{Sa} and the references therein.

\begin{lem}[Growth Lemma]  \label{growth}
Let $r_0\in (0,\infty)$ be a constant and $Q \subset \mathbb R \sp{n+1}$ be a bounded domain. Suppose $X_{0}=(x_{0},t_{0})\in \bar Q \setminus \bar\partial_p Q$ and
\begin{equation*} 
|C_{r}\setminus Q|>\theta |C_{r}|,\quad 0<\theta <1, \quad C_{r}:=C_{r}(X_{0}) \quad
\text{for some }
r\in (0,r_0] .
\end{equation*}
Also, let $L$  be the uniformly parabolic  operator in  \eqref{D} or \eqref{ND}
satisfying \eqref{sign}
with $|b_i|, |c_i|\le K_0$.

Then for any function $u\in W( Q_{r})$,  $Q_{r}:=C_r \cap Q$, satisfying
\begin{equation*} 
Lu \leq 0 \, \text{ in }Q_{r}  \quad \text{and} \quad u\leq 0 \,\, \text{on} \,\,
C_r \cap  \partial_p   Q,
\end{equation*}
we have
\begin{equation}\label{GL}
u(X_0) \leq \beta_0 \, \sup_{C_{r}}u_+\end{equation}
with constant
\begin{equation*}
\beta_0 =\beta_0 (n,\nu ,\theta, K_0, r_0 ) \in (0,1).
\end{equation*}
Assume that $u$ is extended as $u\equiv 0$ on
$C_{r}\setminus Q$, so that the right-hand side of \eqref{GL} is
always well defined.
\end{lem}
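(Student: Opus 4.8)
The plan is to reduce, via Lemma~\ref{ext}, to a subsolution defined on the whole cylinder $C_r$ that vanishes on $C_r\setminus Q$, and then to derive the decay \eqref{GL} from a quantitative version of the strong minimum principle (a Krylov--Safonov / De~Giorgi--Nash--Moser type measure estimate) applied to the "reflected" function $M-u$, where $M:=\sup_{C_r}u_+$.

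\emph{Reductions.} We may assume $M>0$ and $u(X_0)>0$, since otherwise $u(X_0)\le 0<\beta_0 M$ for any $\beta_0\in(0,1)$. By a routine approximation we may also assume $X_0\in Q$: replace $X_0$ by $X_0^{(j)}=(x_0,t_0-1/j)$, which lies in $Q$ for $j$ large by Lemma~\ref{lem11} when $X_0\in\partial Q$, and $r$ by $r_j\uparrow r$ chosen so that $\bar C_{r_j}(X_0^{(j)})\subset C_r(X_0)$; the measure condition holds on these smaller cylinders for $j$ large by dominated convergence, $u$ is continuous up to $\overline{C_r\cap Q}$, the hypothesis $u\le 0$ on $C_r\cap\partial_p Q$ propagates by continuity to $\bar C_{r_j}(X_0^{(j)})\cap\bar\partial_p Q$, and $u(X_0^{(j)})\to u(X_0)$, $\sup_{C_{r_j}(X_0^{(j)})}u_+\le\sup_{C_r(X_0)}u_+$. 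Thus Lemma~\ref{ext} provides, for each $\varepsilon>0$, a function $u_\varepsilon\in W(C_r)$ with
\[
0\le u_\varepsilon\le M\ \text{in }C_r,\qquad u_\varepsilon\equiv 0\ \text{in }C_r\setminus Q,\qquad Lu_\varepsilon\le 0\ \text{in }C_r,\qquad u(X_0)-\varepsilon\le u_\varepsilon(X_0).
\]
It therefore suffices to produce $\delta_0=\delta_0(n,\nu,\theta,K_0,r_0)\in(0,1)$ with $u_\varepsilon(X_0)\le(1-\delta_0)M$ for all small $\varepsilon$; letting $\varepsilon\to 0$ and setting $\beta_0:=1-\delta_0$ then finishes the proof.

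\emph{Passing to a supersolution and the measure estimate.} Put $v:=M-u_\varepsilon$. Then $v\in W(C_r)$, $0\le v\le M$, and, since $M\ge 0$ and $L1\ge 0$ by \eqref{sign}--\eqref{D-sign_01}, one has $Lv\ge 0$ in $C_r$; moreover $v\equiv M$ on the set $F:=\overline{C_r\setminus Q}\cap C_r$, which by continuity of $u_\varepsilon$ satisfies $|F|\ge|C_r\setminus Q|>\theta|C_r|$. So the claim reduces to the estimate $v(X_0)\ge\delta_0 M$ for a nonnegative supersolution $v$ of $L$ on $C_r=B_r(x_0)\times(t_0-r^2,t_0)$ which equals $M$ on a subset of relative measure $>\theta$. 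Since $C_r$ lies entirely in the past of $X_0$, with $X_0$ on its top, this is precisely a quantitative strong minimum principle for parabolic supersolutions. Concretely: subdivide $(t_0-r^2,t_0)$ into finitely many equal pieces and use the pigeonhole principle to find one sub-cylinder on which $\{v\ge M\}$ still has relative measure $>\theta$; apply one step of the measure/growth estimate (Krylov--Safonov in the case \eqref{ND}, De~Giorgi--Nash--Moser in the case \eqref{D}, both valid under \eqref{UE} with $|b_i|,|c_i|\le K_0$, whence the dependence on $K_0$ and $r_0$) to get $v\ge c_1 M$ on a spatial ball at a later time; then propagate this bound forward in time up to $X_0$ by comparing, via Corollary~\ref{comp} (or Lemma~\ref{max}), $v$ from below with the solution of $Lw=0$ in a coaxial sub-cylinder having lateral data $0$ and bottom data $c_1 M$, whose value at the top vertex is a fixed fraction of $c_1 M$ by the strong maximum principle together with a scaling/compactness argument. (Alternatively, one compares $v$ from below with $M(1-h)$, where $Lh=0$ in $C_r\setminus F'$ for a suitable closed $F'\subset F$, $h=1$ on $\partial F'$, $h=0$ on $\partial_p C_r$, and bounds $h(X_0)$ below by the same estimates.)

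\emph{Main obstacle.} The technical heart is exactly this measure estimate. Two points make it nontrivial. First, the exceptional set $F$ may be concentrated in a thin time slab, so a single application of a measure estimate does not reach $X_0$: one must chain in time and control the propagation of positivity all the way to the top vertex of $C_r$, with constants that do not degenerate. Second, the estimate must be established uniformly for both the divergence form \eqref{D} (no regularity on $a_{ij}$) and the non-divergence form \eqref{ND} (under Assumption~\ref{RA}), and must absorb the merely bounded lower-order coefficients $b_i,c_i$ into the Krylov--Safonov / De~Giorgi--Nash--Moser machinery; this last point is responsible for $\beta_0$ depending on $K_0$ and $r_0$ in addition to $n,\nu,\theta$.
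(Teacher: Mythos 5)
Your overall strategy (reduce via Lemma~\ref{ext} to a subsolution on all of $C_r$ vanishing on $C_r\setminus Q$, then deduce \eqref{GL} from a measure-to-point growth estimate applied to $M-u_\varepsilon$) is the same skeleton as the paper's proof, and your reductions are fine. The genuine gap is in the step you yourself flag as the "technical heart": you invoke the Krylov--Safonov measure estimate for the non-divergence case as if it applied directly to $v=M-u_\varepsilon$, but $u_\varepsilon$ is only in $W^{2,1}_{p_0,\mathrm{loc}}$ with $p_0>(n+2)/2$, which may well be below $n+1$. The Krylov--Safonov machinery for operators with merely measurable coefficients rests on the Alexandrov--Bakelman--Pucci/Krylov--Tso estimate and is available for $W^{2,1}_{n+1,\mathrm{loc}}$ strong (sub/super)solutions; it is not applicable at the regularity level $p_0<n+1$, and the constant must moreover be independent of any VMO modulus of $a_{ij}$ (the lemma does not assume Assumption~\ref{RA}, and $\beta_0$ must depend only on $n,\nu,\theta,K_0,r_0$ for the later use in Proposition~\ref{UF}). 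This is exactly why the paper's proof is two-tiered: first it proves the estimate when $u$ and the coefficients are smooth, quoting the growth theorem of Ferretti--Safonov \cite[Corollary 5.4]{FeS} for operators without lower-order terms and absorbing the bounded drift by the dimension-augmentation trick $v(x_0,x,t)=e^{x_0+At}\bigl(\sup_{C_1}u-u(x,t)\bigr)$ with an augmented elliptic matrix; second, it removes the smoothness by an approximation scheme, solving auxiliary Dirichlet problems $L^\varepsilon v^\varepsilon=(Lu)^{(\varepsilon)}$ with mollified coefficients and showing $v^\varepsilon\to u$ (uniformly via Lemma~\ref{pointwise} in the non-divergence case; in $L_2$ and a.e.\ via the energy estimate, a Chebyshev argument, and the maximum principle in the divergence case). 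Your sketch contains no analogue of this approximation step, and without it the key estimate is not "a citation away" in the \eqref{ND} case. (In the divergence case your black box is closer to legitimate, since $v$ is a genuine weak supersolution with $Dv\in L_{2,\mathrm{loc}}$ and bounded lower-order coefficients, but you would still need to quote or prove a measure-to-point estimate reaching the top vertex.)

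Two smaller points. First, the difficulty you describe about the exceptional set being concentrated in a thin time slab near $t_0$ has a much simpler resolution than chaining: since $|C_r\setminus Q|>\theta|C_r|$, a slab $B_r\times(t_0-\delta r^2,t_0)$ together with a thin spatial shell can contain at most $\theta|C_r|/2$ of this set once $\delta$ is small depending on $n,\theta$; hence one keeps a measure fraction $\theta/2$ inside a coaxial sub-cylinder ending strictly before $t_0$, leaving a fixed time gap over which positivity is propagated to $X_0$ -- this is precisely how the paper arranges the hypothesis of \cite[Corollary 5.4]{FeS} with the shifted cylinder $\tilde C_{1-\delta_0}(0,-s)$. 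Second, rather than carrying $c_i,c_0$ through the measure estimate for $M-u_\varepsilon$, the paper makes the cleaner reduction to $c_0=c_i=0$ by applying the operator $L-c_0+D_ic_i$ to the nonnegative function $u_\varepsilon$, using \eqref{sign}; if you keep your route you must check that the supersolution estimate you quote tolerates the $D_i(c_i\,\cdot)$ and $c_0$ terms, which is routine but should be said.
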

\begin{proof}
First we make some reductions. By considering slightly smaller cylinders $C_{r-\varepsilon}(x_0,t_0-\varepsilon^2)$ and letting $\varepsilon\to 0$, without loss of generality we may assume that
$u\le 0$ on $\bar C_r\cap \bar \partial_p Q$ and $X_0 \in Q$. In particular, if $X_0 \in \partial Q \setminus \bar\partial_p Q$, then by Lemma \ref{lem11} $(x_0, t_0-\varepsilon^2) \in Q$ with a sufficiently small $\varepsilon>0$.
Next by a scaling and a translation, we further assume $X_0=(0,0)$ and $r=1$. Note that the dependence of $\beta_0$ on $r_0$ comes from the scaling argument.
Thanks to Lemma \ref{ext} with $Q= Q_1$,
there exists $u_\varepsilon$ such that
$$
(u-\varepsilon)_+ (X_0) \le u_\varepsilon (X_0),
\quad u_\varepsilon  \le u_+  \quad \text{ in } Q_1
$$
$$
u_\varepsilon\in W(C_1),\quad
u_\varepsilon \ge 0,\quad
Lu_\varepsilon\le 0\,\;\text{in}\,\,C_1,\quad
u_\varepsilon\equiv 0\,\;\text{in}\,\,C_1\setminus Q.
$$
We claim that $u_\varepsilon (X_0) \le \beta_0 \, \sup_{C_1} (u_\varepsilon)_+ $.
Once this is proved, using $ u_\varepsilon \le u_+$ in $Q_1$ and $ u_\varepsilon \equiv 0$ in $C_1 \setminus Q_1$,
we obtain
$\sup_{C_1} (u_\varepsilon)_+  \le \sup_{C_1}  u_+$.
Taking  $\varepsilon \to 0^+$, we will eventually  get \eqref{GL}.
Now
$u_\varepsilon$  will be denoted by $u$ for the rest of proof.
Moreover, since $u\ge 0$ in $C_1$ and $L1\ge 0$, by considering
$$
L-c_0+D_ic_i=D_t-D_j(a_{ij}D_i)+(b_i-c_i)D_i
$$
instead of $L$, we only need to treat the case $c_0=c_i=0$.

We first treat the case when $u$ and the coefficients are smooth. In this case, for operators without the lower-order terms, the lemma was proved in \cite{FeS} under a slightly weaker condition. The general case follows by using an idea in Remark 6.2 there. Indeed, we introduce a new variable $x_0\in \bR$ and define
$$
v(x_0,x,t)=e^{x_0+At}\big(\sup_{C_1}u-u(x,t)\big),\quad
\tilde C_r=(-r,r)\times C_r,\quad \tilde Q=\bR\times Q.$$
Set
$$
a_{0i}=0,\quad a_{i0}=-b_i,\quad i=1,\ldots,n,\quad a_{00}=A
$$
in the divergence case, and
$$
a_{0i}=a_{i0}=-b_i/2,\quad i=1,\ldots, n,\quad a_{00}=A
$$
in the non-divergence case. Here we choose $A=A(n,\nu,K_0)>0$ sufficiently large such that the $(n+1)\times (n+1)$ matrix $(a_{ij})_{i,j=0}^n$ satisfies \eqref{UE} with a possibly smaller constant $\nu$. Then it is easy to check that $v$ satisfies
$$
L^0 v=v_t-\sum_{i,j=0}^n D_j(a_{ij}D_i v)\ge 0
$$
in the divergence case, and
$$
L^0 v=v_t-\sum_{i,j=0}^n a_{ij}D_{ij} v\ge 0
$$
in the non-divergence case.
Now \cite[Corollary 5.4]{FeS} is applicable to $v$. Indeed, we can find a small constant $\delta_0\in (0,1/4)$ depending only on $n$ and $\theta$ such that
$$
2\delta_0^2+(1-\delta_0)^2\le 1,\quad |\tilde C_{1-\delta_0}(0,-s)\setminus \tilde Q|>\theta|\tilde C_1|/2,
$$ where $s=1-(1-\delta_0)^2$.
Moreover, since $u=0$ in $C_1\setminus Q$,
we have $v\ge e^{-1-A}   \sup_{C_1}u$ in $\tilde C_{1-\delta_0}(0,-s)\setminus \tilde Q$.
Thus we
get
$$
v\ge \beta e^{-1-A}\sup_{C_{1}}u \quad \text{in}\,\, \tilde C_{\delta_0}
$$ for some $\beta=\beta(n,\nu,\theta,K_0)\in (0,1)$, which implies \eqref{GL} with $\beta_0=1-\beta e^{-1-A}$.
We note that the proof above only requires
\begin{equation}
                    \label{eq10.32}
Lu\le 0\,\;\text{in}\,\,C_1,\quad
u\le 0\,\;\text{in}\,\,C_1\setminus Q.
\end{equation}

Next we remove the smoothness assumption on $u$ and the coefficients by using an approximation argument. We take a small $\delta$ depending on $n$ and $\theta$ such that $|C_{1-\delta}\setminus Q|\ge \theta|C_{1-\delta}|/2$. For $\varepsilon\in (0,\delta)$, let $u^{(\varepsilon)}$, $a^{(\varepsilon)}_{ij}$, and $b^{(\varepsilon)}_i$ be the standard mollification of $u$, $a_{ij}$, and $b_i$.

By the Lebesgue lemma,
$$
(a^{(\varepsilon)}_{ij},b_i^{(\varepsilon)})\to (a_{ij},b_i)\,\;\text{a.e. in}\,\,C_{1-\delta}.
$$

{\em Non-divergence case:} Clearly, $Lu\in L_{p_0,\text{loc}}(C_1)$. Since $a^{(\varepsilon)}_{ij}$, $b_i^{(\varepsilon)}$, and $u^{(\varepsilon)}$ are smooth, the Dirichlet problem
$$
L^\varepsilon v^\varepsilon:=v^\varepsilon_t-a_{ij}^{(\varepsilon)} D_{ij} v^\varepsilon+b_i^{(\varepsilon)} D_i v^\varepsilon=(Lu)^{(\varepsilon)} \quad \text{in}\,\,C_{1-\delta}
$$
with the boundary condition $v^\varepsilon=u^{(\varepsilon)}$ on $\partial_p C_{1-\delta}$ has a unique smooth solution $v^\varepsilon$ in $C_{1-\delta}$. Moreover,
$$
L^\varepsilon(v^\varepsilon-u^{(\varepsilon)})=(Lu)^{(\varepsilon)}-Lu+Lu-L^\varepsilon u^{(\varepsilon)}\to 0\quad\text{in}\,\,L_{p_0}(C_{1-\delta}).
$$
By Lemma \ref{pointwise} below and the uniform continuity of $u$, for any $h>0$ we have $|v^\varepsilon-u|\le h$ in $C_{1-\delta}$ provided that $\varepsilon$ is sufficiently small. Because
$$
L^\varepsilon (v^\varepsilon-h)=(Lu)^{(\varepsilon)}\le 0 \quad  \text{ in } C_{1-\delta}
$$
and
$$
v^\varepsilon-h\le u=0 \quad  \text{ in } C_{1-\delta}\setminus Q,
$$
\eqref{eq10.32} is satisfied with $v^\varepsilon-h$ in place of $u$ and $C_{1-\delta}$ in place of $C_1$.
Since $v^\varepsilon-h$, $a^{(\varepsilon)}_{ij}$, and $b^{(\varepsilon)}_i$ are smooth, by the proof above, we get
$$
u-2h\le v^\varepsilon -h\le \beta_0 \sup_{C_{1-\delta}}(v^\varepsilon -h)\le \beta_0\sup_{C_1}u_+\quad \text{in}\,\,C_{\delta_0(1-\delta)}.
$$
Letting $h\to 0$ and $\delta\to 0$ gives \eqref{GL}.

{\em Divergence case:}
Let
\[
D_t (u) - D_j ( a_{ij} D_i u ) + b_i D_i u  =: f_0 - D_i f_i    .
\]
By a property of convolution, we have
\[
D_t u^{(\varepsilon)} - D_j \left( ( a_{ij} D_i u )^{(\varepsilon)}  \right) + (b_i D_i u)^{(\varepsilon)}
 = f_0^{(\varepsilon)} - D_i f_i^{(\varepsilon)}    .
\]
Similar to the non-divergence case, let $v^\varepsilon$ be the weak solution of
\[
L^\varepsilon v^\varepsilon : =  D_t v^\varepsilon - D_j \left( a_{ij}^{ (\varepsilon) } D_i v^\varepsilon \right)
+ b_i^{(\varepsilon)}  D_i v^\varepsilon = f_0^{(\varepsilon)} - D_i f_i^{(\varepsilon)}
\quad  \text{ in } C_{1-\delta}
\]
with the boundary condition $v^\varepsilon = u^{(\varepsilon )} $ on
$\partial_p C_{1-\delta} $.
Note that
\[
L^\varepsilon u^{(\varepsilon )}  =  D_j \left( ( a_{ij} D_i u )^{(\varepsilon)} \right)
 - (b_i D_i u)^{(\varepsilon)} + f_0^{(\varepsilon)} - D_i f_i^{(\varepsilon)}
 - D_j \left( a_{ij}^{ (\varepsilon) } D_i u^{(\varepsilon)} \right)
+ b_i^{(\varepsilon)}  D_i u^{(\varepsilon )}.
\]
Thus,
\[
L^\varepsilon ( v^\varepsilon - u^{(\varepsilon)} ) =
D_j \left( a_{ij}^{ (\varepsilon) } D_i u^{(\varepsilon )} - (a_{ij} D_i u)^{(\varepsilon)}  \right) + (b_i D_i u )^{(\varepsilon)} - b_i^{(\varepsilon)} D_i u^{(\varepsilon)}    .
\]
It is easy to see that
\[
a_{ij}^{ (\varepsilon) } D_i u^{(\varepsilon )}  - (a_{ij} D_i u)^{(\varepsilon )} \to 0, \quad
 (b_i D_i u )^{(\varepsilon )}  - b_i^{(\varepsilon )} D_i u^{(\varepsilon )} \to 0
\]
in $L_2(C_{1-\delta})$.
Due to the energy inequality, we have $v^\varepsilon\to u$ in $L_2(C_{1-\delta})$, and, after extracting a subsequence, $v^{\varepsilon_k} \to u$ (a.e.) in $C_{1-\delta}$ for a decreasing sequence $\varepsilon_k \searrow 0$.
Fix a small $h>0$, denote
$$
S_{\varepsilon,h,1-\delta}:=\{v^\varepsilon>h,u\le 0\}\cap C_{1-\delta}.
$$
By Chebyshev's inequality, as $\varepsilon\to 0$,
$$
|S_{\varepsilon,h,1-\delta}|\le
 \frac{1}{h^2}
\int_{C_{1-\delta}}(v^\varepsilon-u)^2\,dX\to 0.
$$
Thus, for $\varepsilon$ sufficiently small, we have
$$
|\{v^\varepsilon\le h\}\cap C_{1-\delta}|
 \ge |\{u\le 0\}\cap C_{1-\delta}|-|S_{\varepsilon,h,1-\delta}|>\frac \theta 4 |C_{1-\delta}|.
$$
Since $v^\varepsilon$, $a^{(\varepsilon)}_{ij}$, and $b_i^{(\varepsilon)}$ are smooth, and $(Lu)^{(\varepsilon)}\le 0$ in $C_{1-\delta}$, from the proof above and the maximum principle, we reach
$$
\sup_{C_{\delta_0(1-\delta)}}(v^\varepsilon -h)\le
\beta_0
\sup_{C_{1-\delta}}(v^\varepsilon -h)\le
\beta_0
\sup_{\partial_p C_{1-\delta}}(v^\varepsilon -h)  =
\beta_0
\sup_{\partial_p C_{1-\delta}}u^{(\varepsilon)} -h
\le
\beta_0
\sup_{C_1}u.
$$
Bearing in mind that $v^{\varepsilon_k} \to u$ (a.e.) in $C_{1-\delta}$ as $k\to \infty$ and $u$ is continuous, we get
$$
\sup_{C_{\delta_0(1-\delta)}}(u -h)\le \beta_0
\sup_{C_1}u.
$$
Since $h$ is an arbitrary positive constant, we obtain the desired estimate.
\end{proof}

\begin{rem} \label{GLremark}
If we additionally assume $u=0$ on $\partial_p Q$, then under the conditions of Lemma \ref{growth}, from $u(X_0) \le \beta_0 \sup_{C_r} u_+ $ we can derive
\begin{equation*} 
\sup_{Q \cap \hat C_{\delta_0 r}(X_0)}u
\leq \beta_0 \, \sup_{Q \cap \hat C_{r}(X_0)} u_+\end{equation*}
with constant
\begin{equation*}
\delta_0=\delta_0(n,\theta)\in (0,1/4),   \
\beta_0 =\beta_0 (n,\nu ,\theta/2, K_0, r_0 ) \in (0,1).
\end{equation*}
Moreover, in this case $X_0$ can be any point on $\bar Q$.
For this, note that the measure condition
$$
|C_r (X) \setminus Q | >
\frac{\theta}{2} |C_r|
$$
holds  for any $X \in \hat C_{\delta_0 r}(X_0)$
with small $\delta_0$ depending on $n$ and $\theta$.
\end{rem}

In the sequel, $N=N(\cdots )$ denotes a constant depending only on the prescribed quantities,
such as $n$, $\nu$, etc., which are specified in the parentheses,  and the value of $N$ may change from line to line.

\begin{lemma}[Pointwise estimate] \label{pointwise}
Let $p_0\in (\frac {n+2} 2,\infty)$, $R_1 > 0$,
and $L$ be a uniformly parabolic operator (in the form \eqref{D} or \eqref{ND}) defined on a cylinder
$C_{R}:=C_{R}(X_{0}) \subset \mathbb{R}^{n+1}$, $R \le R_1$, which satisfies \eqref{sign} and $|b_i|, |c_i|, |c_0| \le K_0$.
In the non-divergence case we further assume that $a_{ij}$ satisfy Assumption \ref{RA}.
Then for any $f=f_0-D_if_i$, where $f_0 \in L_{p_0}(C_R)$ and  $f_i \in L_{2p_0}(C_R)$ for the divergence case,
and $f \in L_{p_0}(C_R)$ for the non-divergence case,
there exists a unique solution $w\in W(C_{R})$ to the equation
$$
L w = f  \text{ in  } C_{R}, \quad
w = 0  \text{ on  } \partial_p C_R.
$$
Moreover, $w$ is H\"older continuous in $C_R$ and
\begin{equation}   \label{15}
| w | \leq N_1 \left( \|f_0\|_{L_{p_0}(C_R) } + \| f_i \|_{L_{2p_0}(C_R)} \right) \; \text{ for the divergence case},
\end{equation}
where $N_{1}=N_1(n,\nu, K_0, R_1, p_0)$,
and
\begin{equation}   \label{16}
| w | \leq N_2  \|f\|_{L_{p_0}(C_R)}  \; \text{ for the non-divergence case},
\end{equation}
where $N_2=N_2(n,\nu, K_0, R_1, p_0, \omega_a)$.
\end{lemma}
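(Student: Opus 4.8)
The plan is to reduce, by a parabolic dilation, to the unit cylinder $C_1$, and then to read off existence, regularity, and the pointwise bound from the known solvability theory for parabolic equations in cylindrical domains: the $W^{2,1}_p$-theory with VMO leading coefficients in the non-divergence case, and the classical $L_2$/$L_p$-theory for divergence-form parabolic equations in the divergence case. Uniqueness is immediate and does not require scaling: if $w_1,w_2\in W(C_R)$ both solve the problem, then $w:=w_1-w_2$ satisfies $Lw=0$ in $C_R$ and $w=0$ on $\partial_p C_R$, so $w\equiv 0$ by the maximum principle (Lemma~\ref{max}), whose hypotheses hold since $L$ satisfies \eqref{sign}, the coefficients $b_i,c_i,c_0$ are bounded, and, in the non-divergence case, Assumption~\ref{RA} is in force. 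It remains to produce a solution with the stated properties.

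For the reduction I would apply the dilation $X=(x,t)\mapsto(x_0+R^{-1}(x-x_0),\,t_0+R^{-2}(t-t_0))$, which carries $C_R(X_0)$ onto $C_1(X_0)$. The rescaled leading coefficients keep the ellipticity constants of \eqref{UE}; in the non-divergence case they remain VMO in $x$ with a modulus dominated by $\bar\omega(\rho):=\omega_a(R_1\rho)$, which is independent of $R\in(0,R_1]$ and tends to $0$ as $\rho\to0^+$. The rescaled lower-order coefficients are bounded by $K_0R_1+K_0R_1^2$. Finally, the data norms transform by the factors $R^{\,2-(n+2)/p_0}$ for $f_0$ (and for $f$ in the non-divergence case) and $R^{\,1-(n+2)/(2p_0)}$ for the $f_i$; both exponents are positive because $p_0>(n+2)/2$, so these factors are bounded by a constant depending only on $n,p_0,R_1$. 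Hence everything reduces to proving the estimate on $C_1$ with constants depending only on $n,\nu,K_0,R_1,p_0$ (and $\omega_a$ in the non-divergence case); the appearance of $R_1$ in $N_1,N_2$ is exactly because the bounds $|b_i|,|c_i|,|c_0|\le K_0$ are not scale invariant.

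On $C_1$ in the non-divergence case, the leading coefficients are VMO in $x$ and merely measurable in $t$, the spatial cross-section $B_1$ is smooth, and the zero boundary data trivially satisfies all compatibility conditions; hence the problem $Lw=f$ in $C_1$, $w=0$ on $\partial_p C_1$, has a unique solution $w\in W^{2,1}_{p_0}(C_1)$ with $\|w\|_{W^{2,1}_{p_0}(C_1)}\le N\|f\|_{L_{p_0}(C_1)}$ by the $W^{2,1}_p$-solvability theory for VMO coefficients (cf. \cite{DY10,DK09}). Since $p_0>(n+2)/2$, the parabolic Sobolev embedding (see \cite[Chap.~II]{LSU}) gives $W^{2,1}_{p_0}(C_1)\hookrightarrow C^{\alpha}(\bar C_1)$ with $\alpha=2-(n+2)/p_0$, so $w$ is H\"older continuous up to $\bar C_1$, hence $w\in W(C_1)$, and $\|w\|_{L_\infty(C_1)}\le N\|w\|_{W^{2,1}_{p_0}(C_1)}\le N\|f\|_{L_{p_0}(C_1)}$; undoing the scaling yields \eqref{16}. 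In the divergence case, existence of a weak solution $w\in H(C_1)$ with $w=0$ on $\partial_p C_1$ follows from the standard $L_2$-based weak solvability (Galerkin or Lax--Milgram, the bounded lower-order terms handled in the usual way); see \cite[Chap.~III]{LSU} or \cite{Y89}. Because $f_0\in L_{p_0}(C_1)$ with $(n+2)/(2p_0)<1$ and $f_i\in L_{2p_0}(C_1)$ with $(n+2)/(4p_0)<1/2$, the classical $L_\infty$- and interior/boundary H\"older estimates for divergence-form parabolic equations (\cite[Chap.~III]{LSU}, \cite{Li}) show that $w$ is H\"older continuous up to $\bar C_1$ and that $\sup_{C_1}|w|\le N\big(\|f_0\|_{L_{p_0}(C_1)}+\|f_i\|_{L_{2p_0}(C_1)}\big)$; from the equation one checks $D_t(w)=g_0+D_ig_i$ with $g_0\in L_{1,\text{loc}}$ and $g_i\in L_{2,\text{loc}}$, so $w\in W(C_1)$, and rescaling gives \eqref{15}.

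The analytic content is essentially imported, so I expect the main work to be bookkeeping in the reduction step: one must check carefully that the rescaled operator still meets the hypotheses with constants controlled by $n,\nu,K_0,R_1,p_0$ --- in particular that the VMO modulus of the rescaled leading coefficients is dominated by the single fixed modulus $\bar\omega$ --- and that the rescaled data norms carry only nonnegative powers of $R$. A secondary point is that the solvability results one invokes are usually stated for the whole space, the half space, or $C^{1,1}$/Reifenberg-flat domains, so one should verify that the smooth cylinder $C_1$ is covered; and in the divergence case one should note that $C_R$, being a smooth cylinder (in particular satisfying an exterior cone condition), is regular enough both for the boundary H\"older estimate and for the boundary data to be attained continuously.
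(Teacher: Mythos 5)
Your proposal is correct and follows essentially the same route as the paper: in the divergence case, classical $L_2$ weak solvability combined with the De Giorgi--Nash--Moser $L_\infty$/H\"older estimates (noting $f_0\in L_{p_0}$ with $p_0>(n+2)/2$ and $f_i\in L_{2p_0}$), and in the non-divergence case, the $W^{2,1}_{p_0}$ solvability in cylinders for coefficients VMO in $x$ and measurable in $t$ (\cite{DK09}) followed by the parabolic Sobolev embedding. Your explicit dilation to $C_1$ and the maximum-principle uniqueness argument are just bookkeeping that the paper leaves implicit, not a different method.
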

\begin{proof}
In the divergence case, the unique existence of $w$ in $H(C_R)$ is classical by noting that $f_i\in L_2(C_R)$ and $f_0\in L_{2(n+2)/(n+4)}(C_R)$. The H\"older continuity and \eqref{15} are due to the parabolic De Giorgi--Nash--Moser estimate. See, for instance, \cite[Chap. 6]{Li}. In the non-divergence case, for operators without the lower-order terms, the unique solvability was first established in \cite{BC93}
under the assumption that the leading coefficients are VMO with respect to both $x$ and $t$. In the general case, the unique solvability in $W^{1,2}_{p_0}(C_R)$ can be found in \cite{DK09}; see Theorem 6 and Remark 1 there. Recall that $p_0>(n+2)/2$. The H\"older continuity and \eqref{16} then follow from the parabolic Sobolev embedding theorem.
The lemma is proved.
\end{proof}

We remark that if $p_0\ge n+1$, by the parabolic Alexandrov--Bakelman--Pucci estimate (cf. \cite{Kr76,T85}), the constant $N_2$ can be taken to be independent of the regularity of $a_{ij}$.

\section{Weighted uniform estimate}   \label{sec4} 

The a priori estimate in Proposition \ref{UF} below is a key ingredient in the proof of the existence of solutions.
The idea of the proof is based on \cite[Theorem 3.5]{CS}.
Our case is much more involved because the coefficients of the lower-order terms may blow up near the boundary. We use a re-scaling argument, for which we introduce some additional notation. For any $\rho>0$, we denote
$$
u^\rho (X) = u (\rho x,\rho^2 t) \quad \text{in}\,\,
\rho^{-1}Q:= \{(\rho^{-1}x,\rho^{-2}t) \,|\,  (x,t)  \in Q \}.
$$
For the divergence case \eqref{D}, by a simple scaling,
$$
L^\rho u^\rho := D_t ( u^\rho )-  D_j(a^\rho_{ij} D_i u^\rho) +  \rho b^\rho_i D_i u^\rho- \rho D_i (c^\rho_i u^\rho)
 + \rho^2 c^\rho_0 u^\rho = \rho^2 f^\rho_0-\rho D_if^\rho_i,
$$
where $a_{ij}^\rho (X)= a_{ij} (\rho x,\rho^2 t)$, etc.
For the non-divergence case \eqref{ND}, we have
\begin{align*} 
L^\rho u^\rho :=
 u_t^\rho -a_{ij}^\rho  D_{ij} u^\rho + \rho b_i^\rho D_i u^\rho+\rho^2 c_0^\rho u^\rho=\rho^2 f^\rho.
\end{align*}
Denote
$$
d_\rho (X) = \text{dist} (X, \partial_p(\rho^{-1} Q) ) = \rho^{-1} d( \rho x,\rho^2 t).
$$

Next we modify the operators near the parabolic boundary. For any $\rho>0$,
take a smooth non-negative function $\eta^\rho$ such that $\eta^\rho=0$ in $\{d< \rho/2\}$, $\eta^\rho=1$ in $\{d> \rho\}$, and its modulus of continuity $\omega_\eta(r)\le Nr/\rho$. For $\varepsilon\in (0,1)$, denote
$$
a^{\rho,\varepsilon}_{ij}=\eta^{\rho\varepsilon}(\rho x,\rho^2 t) a^\rho_{ij}+\big(1-\eta^{\rho\varepsilon}(\rho x,\rho^2 t)\big)\delta_{ij},
$$
which satisfy  \eqref{UE}, and the modulus of continuity $\omega_{a^{\rho,\varepsilon}}$ has an upper bound
$$
\omega_{a^{\rho,\varepsilon}}(r)\le N(\omega_a(\rho r)+r/\varepsilon).
$$
Note that this is independent of $\rho$ for $\rho\in (0,1)$.
Let
$$b_i^{\rho,\varepsilon}=\eta^{\rho\varepsilon}(\rho x,\rho^2 t)b^\rho_i,\quad c_i^{\rho,\varepsilon}=\eta^{\rho\varepsilon}(\rho x,\rho^2 t)c^\rho_i,
$$
$$
c_0^{\rho,\varepsilon}=\eta^{\rho\varepsilon}(\rho x,\rho^2 t)c^\rho_0+D_i\eta^{\rho\varepsilon}(\rho x,\rho^2 t)c^\rho_i,\quad f^{\rho,\varepsilon}=\eta^{\rho\varepsilon}(\rho x,\rho^2 t)f^\rho.
$$
Then $b_i^{\rho,\varepsilon},c_i^{\rho,\varepsilon}, c^{\rho,\varepsilon}_0 \in L_\infty(\bR^{n+1})$.
We define an operator $L^{\rho,\varepsilon}$ on $\bR^{n+1}$:
$$
L^{\rho,\varepsilon} w := D_t ( w )-  D_j(a^{\rho,\varepsilon}_{ij} D_i w) +  \rho b^{\rho,\varepsilon}_i D_i w- \rho D_i (c^{\rho,\varepsilon}_i w)
 + \rho^2 c^{\rho,\varepsilon}_0 w
$$
in the divergence case, and
$$
L^{\rho,\varepsilon} w :=
 w_t -a_{ij}^{\rho,\varepsilon}  D_{ij} w + \rho b_i^{\rho,\varepsilon} D_i w+\rho^2 c_0^{\rho,\varepsilon} w
$$
in the non-divergence case. It is easily seen that $L^{\rho,\varepsilon}1\ge 0$.

\begin{pro} \label{UF}
Let $p_0\in (\frac {n+2} 2,\infty)$ be a constant, $Q$ be a bounded domain in $\mathbb{R}^{n+1}$ satisfying the measure condition (A)
with a constant $\theta_0 >0$,
and $L$ be a uniformly parabolic operator in the form of \eqref{D} or \eqref{ND}
satisfying the same assumptions for the coefficients $a_{ij}$, $b_i$, $c_i$, and $c_0$ as in Theorem \ref{main}.
Then there exists a constant $\beta_1 = \beta_1(n,\nu,\theta_0) \in (0,1]$
such that the following hold true:

(i) For $\beta \in (0,\beta_1)$ and
 an open subset $Q' \Subset_p Q$, if  $u \in W(Q')$,  $f \in F(Q)$,  and
\[
Lu=f \text{ in } Q',\quad  u=0 \quad \text{ on } \partial_p Q',
\]
then
\[
\sup_{Q'} d^{-\beta} u \le N
\|f\|_{ F(Q) },
\]
where $d= d(X)$,
$N=N(n,\nu, \theta_0,\gamma, \diam(Q), \beta, p_0)$, which also depends on $\omega_a^*$ in the
non-divergence case.
Here
$$
\omega_a^*=\max\big\{\sup_{\rho\in (0,\rho_0)}\omega_{a^{\rho,\varepsilon}}, \,\, \omega_{a^{1,\rho_0/4}}\big\},
$$
where $\varepsilon\in (0,1)$ depending only on $n$, $\nu$, $\theta_0$, and $\beta$, and $\rho_0\in (0,1)$ depending only on the same parameters and $\gamma$.

(ii) The same estimate holds true for $Q'=Q$ if in addition we assume that
$f$ vanishes near $\partial_p Q$, and $b_i$ and $c_i$ are bounded (for instance, by $1$) near $\partial_p Q$.
\end{pro}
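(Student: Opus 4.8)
The plan is to prove part (i) by a rescaling-and-iteration argument based on the growth lemma (Lemma \ref{growth}), following the strategy of \cite[Theorem 3.5]{CS}, and then to deduce part (ii) by a limiting/absorption argument that handles the full domain $Q'=Q$. The core difficulty, and the reason the statement is subtler than in \cite{CS}, is that $b_i,c_i$ may blow up like $o(d^{-1})$ near $\partial_p Q$, so a naive application of the growth lemma on a cylinder $C_r(X)$ with $r\sim d(X)$ fails because $K_0$ there would be of order $r^{-1}$, which does not fit the hypothesis $|b_i|,|c_i|\le K_0$ of Lemma \ref{growth}. The remedy is precisely the rescaling $u\mapsto u^\rho$ and the modified operators $L^{\rho,\varepsilon}$ introduced just before the statement: after rescaling by $\rho\sim d(X)$, the lower-order coefficients $\rho b_i^\rho,\rho c_i^\rho$ are controlled by $\gamma(\rho)\to 0$, which can be made as small as we like, and the modification by $\eta^{\rho\varepsilon}$ turns the operator into the heat operator in an $\varepsilon\rho$-neighborhood of the parabolic boundary while keeping $L^{\rho,\varepsilon}1\ge 0$ and preserving the VMO bound $\omega_a^*$ uniformly in $\rho$.

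\emph{Step 1 (choice of $\beta_1$ and the decay estimate).} First I would fix a point $X_*\in Q'$, set $\rho = d(X_*)$ (or a comparable dyadic value), and consider the rescaled function $u^\rho$ on $\rho^{-1}Q$, which satisfies $L^\rho u^\rho=\rho^2 f^\rho$ with $d_\rho(\rho^{-1}X_*)=1$. Iterating the growth lemma in the form of Remark \ref{GLremark} on a chain of shrinking cylinders $\hat C_{\delta_0^k}(\cdot)$ emanating from the parabolic boundary — using that $Q$ (hence $\rho^{-1}Q$) satisfies condition (A) with $\theta_0$, and using Lemma \ref{lem2.0} to get the measure condition at interior points for large radii — yields a geometric decay $\sup_{Q_r}u_+ \le C\beta_0^{k} \sup_{Q_1}u_+ + (\text{contribution of }f)$ on cylinders of radius $r=\delta_0^k$. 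Choosing $\beta_1=\beta_1(n,\nu,\theta_0)$ so that $\beta_0^{k}\le \delta_0^{k\beta_1}$ (this is where $\beta_1$ depends only on $n,\nu,\theta_0$ via the growth-lemma constant $\beta_0$ with the degenerate coefficient bound $K_0=0$ after rescaling, more precisely $K_0\le$ a fixed constant once $\rho\le\rho_0$), I get the oscillation-type bound $u(X)\le N d(X)^\beta(\sup u_+ + \|f\|)$ for $\beta<\beta_1$. The forcing term is controlled because $f\in F_\beta(Q)$ means exactly that $\dashint_{\hat C_r(Y_0)\cap Q}|d^{2-\beta}f|^{p}$ is bounded, so Lemma \ref{pointwise} applied to the rescaled, modified equation on a unit cylinder contributes a term of the right homogeneity; summing the geometric series in $k$ converges precisely because $\beta<\beta_1\le 2-\beta$-type exponents are favorable, and the scaling factors $\rho^2 f^\rho$, $\rho f^\rho$ combine with $d^{2-\beta}$ weights to produce $\|f\|_{F(Q)}$.

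\emph{Step 2 (from oscillation decay to the sup bound, and the constant's dependence).} Having $\sup_{Q'}d^{-\beta}u\le N(\|f\|_{F(Q)} + \sup_{Q'}u_+)$ is not yet the claim; I must absorb $\sup_{Q'}u_+$. Since $u=0$ on $\partial_p Q'$ and $Q'\Subset_p Q$, the maximum principle (Lemma \ref{max}) together with the interior pointwise estimate (Lemma \ref{pointwise}, applied after freezing the lower-order coefficients on compact subsets where they are bounded) gives $\sup_{Q'}u_+\le N\|f\|$; more carefully, one runs the growth-lemma iteration from every point and uses that $d^{-\beta}$ is bounded below by a constant depending on $\diam(Q)$ on the region where $d$ is bounded away from $0$, so the interior contribution is absorbed into $N\|f\|_{F(Q)}$ with $N$ depending on $\diam(Q)$. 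The dependence of $N$ on $\gamma$ enters through the choice of $\rho_0$: we need $\rho\le\rho_0$ small enough that $\gamma(\rho_0)$ (hence the rescaled lower-order bound) is below the threshold at which the growth-lemma constant $\beta_0$ still gives exponent $\ge\beta_1$; the dependence on $\omega_a^*$ enters only through the non-divergence pointwise estimate \eqref{16}, which is why $N_2$ and hence $N$ depends on it, and the specific form of $\omega_a^*$ (a max over the rescaled moduli $\omega_{a^{\rho,\varepsilon}}$ and $\omega_{a^{1,\rho_0/4}}$) is dictated by which rescaled equations actually get used in the iteration. Running the same argument for $-u$ gives the two-sided bound, completing (i).

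\emph{Step 3 (part (ii), the full domain).} For $Q'=Q$, under the extra hypotheses that $f$ vanishes near $\partial_p Q$ and $b_i,c_i$ are bounded near $\partial_p Q$, the solution $u\in W(Q)$ vanishes on $\partial_p Q$ by definition, so I can exhaust $Q$ by subdomains $Q_m\Subset_p Q$ with $Q_m\uparrow Q$, apply part (i) on each $Q_m$ to get $\sup_{Q_m}d_{Q_m}^{-\beta}u\le N_m\|f\|_{F(Q)}$, and check that the constants $N_m$ can be taken uniform in $m$: this is exactly where the two extra assumptions are used — near $\partial_p Q$ the operator already has bounded lower-order coefficients (so no rescaling is needed there, only the finite chain of growth-lemma steps reaching the genuine boundary $\partial_p Q$), and $f\equiv 0$ near $\partial_p Q$ kills the boundary forcing contribution. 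Since $d_{Q_m}\to d_Q$ locally uniformly, passing to the limit $m\to\infty$ gives $\sup_Q d^{-\beta}u\le N\|f\|_{F(Q)}$ with $N$ as in (i). I expect the main obstacle to be Step 1: making the bookkeeping of scaling factors rigorous so that the geometric series over dyadic scales converges with the right power $\beta<\beta_1$ and the forcing term assembles into $\|f\|_{F(Q)}$ uniformly — in particular verifying that the modified operators $L^{\rho,\varepsilon}$ satisfy the growth lemma with a constant $\beta_0$ independent of $\rho\in(0,\rho_0)$, which hinges on the uniform-in-$\rho$ VMO bound $\omega_{a^{\rho,\varepsilon}}(r)\le N(\omega_a(\rho r)+r/\varepsilon)$ and the uniform-in-$\rho$ smallness $\rho|b_i^\rho|,\rho|c_i^\rho|\le\gamma(\rho)\le\gamma(\rho_0)$.
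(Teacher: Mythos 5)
Your overall toolkit (rescaling by $\rho\sim d$, the modified operators $L^{\rho,\varepsilon}$, Lemma \ref{growth}/Remark \ref{GLremark}, Lemma \ref{lem2.0}, and Lemma \ref{pointwise} for the forcing term) is the right one, but there are two genuine gaps. The main one is in your Step 2: the claim that the maximum principle plus Lemma \ref{pointwise} gives $\sup_{Q'}u_+\le N\|f\|_{F(Q)}$ with $N$ independent of $Q'$ is not justified and is essentially circular. Near $\partial_p Q$ the lower-order coefficients are unbounded and $f$ is only controlled in the weighted sense ($d^{2-\beta}f\in L_{p_0}$), so a direct barrier/comparison bound for $\sup u_+$ there is exactly the content of the proposition; if instead you restrict to an interior region where the coefficients are bounded, the boundary values of $u$ on that region are only controlled by your Step 1 estimate, whose right-hand side contains $\sup u_+$ again — the circle does not close without an extra quantitative absorption mechanism. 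The paper breaks this circle differently: it evaluates at the point $X_0$ where $d^{-\beta}u$ attains its maximum $M$ (continuity up to $\bar{Q'}$ holds because $Q'\Subset_p Q$), rescales by $\rho=d(X_0)$, applies the growth lemma \emph{once} to $v^\rho=u^\rho-(\varepsilon\rho)^\beta M-w-\sup|w|$ on the set $V=\rho^{-1}Q'\cap\{d_\rho>\varepsilon\}\cap\{v^\rho>0\}$, and absorbs $M$ using the arithmetic choice $\varepsilon^\beta+(R+1)^\beta\beta_0<1$ (with $R=4/\theta_0$, $\beta_1=-\log_{2R}\beta_0$); the case $\rho\ge\rho_0$ is absorbed via the factor $4^{-\beta}<1$ coming from $\sup_{\partial_p Q^\rho}u_+\le(\rho/4)^\beta M$. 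Your dyadic-chain iteration can in principle be made to work, but only with an analogous two-scale absorption argument that you have not supplied; as written, the step ``interior contribution is absorbed into $N\|f\|_{F(Q)}$'' is the missing heart of the proof.

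The second gap is in Step 3: you cannot ``apply part (i) on each $Q_m\Subset_p Q$'' because part (i) requires $u=0$ on $\partial_p Q_m$, and the solution of (ii) vanishes only on $\partial_p Q$, not on the parabolic boundaries of the exhausting subdomains. The hypotheses of (ii) ($f=0$ and $|b_i|,|c_i|\le 1$ near $\partial_p Q$) are not there to make constants uniform along an exhaustion; in the paper they are used, via Remark \ref{GLremark} and an iteration with $\beta_1=\log_{\delta_0}\beta_0$, to prove the a priori decay $|u|\le N d^{\beta_1}$ up to $\partial_p Q$, which guarantees that $d^{-\beta}u$ extends continuously to $\bar Q$, vanishes on $\partial_p Q$, and attains its maximum; one then reruns the proof of (i) verbatim with $Q'=Q$. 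Your parenthetical remarks (finite growth-lemma chain reaching $\partial_p Q$, no forcing there) point in this direction, but the exhaustion framing itself does not work and should be replaced by this continuity-of-$d^{-\beta}u$ argument.
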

\begin{proof}
We first prove the assertion (i).
We assume that the set $Q'\cap \{u>0\}$ is
nonempty; otherwise there is nothing to prove. The assumption ${\rm
dist}\,(Q',\partial_p Q)>0$ allows us to claim
that $d^{-\beta }u\in C(\bar{Q'})$, and hence there
is a point $X_0\in \bar{Q'}$ at which
\begin{equation} \label{MU}
d^{-\beta } (X_0)  u(X_{0}) =
M:=\sup_{Q'}d^{-\beta }u>0.
\end{equation}
Set $\rho:=d(X_{0}):={\rm dist}\,(X_{0},\partial_p Q )>0$,
and choose a point $Y_{0}\in \bar\partial_p Q$ for which $d(X_{0})=|X_{0}-Y_{0}|$.
Without loss of generality, we assume $X_0=(0,0)$ and $Y_0=(y_0,s_0)$.

Now we set $R:=4/\theta_0$ and
$\beta_1 :=- \log_{2R} \beta_0 >0$, where
$$
\beta_0=\beta_0(n,\nu,\theta_0/2^{n+2},1,4/\theta_0)
$$
is the constant from  Lemma~\ref{growth}.
Since $0 < \beta < \beta_1$ and $R > 1$, we have
$$
1 - (R+1)^\beta \beta_0 >  1 - (R+1)^{\beta-\beta_1}> 0.$$
Now we take $\varepsilon\in (0,1)$, depending only on $n$, $\nu$, $\theta_0$, and $\beta$,  such that
\begin{equation}
							\label{eq0728}
\varepsilon^\beta + (R+1)^\beta \beta_0 < 1.
\end{equation}

Denote $\rho^{-1}Q':= \{(\rho^{-1}x,\rho^{-2}t) \,|\,  (x,t)  \in Q' \}$ and $Q_r=C_r\cap (\rho^{-1}Q)$.
Observe that
$$
|\rho b_i^\rho (X) |, \, |\rho c_i^\rho (X) | \le \rho\, o \big(d^{-1} (\rho x,\rho^2 t )\big) =
 \rho\, o ( \rho^{-1} d^{-1}_\rho (x,t) ) \le \varepsilon^{-1} \gamma\left((R+1)\rho\right)
$$
in  $\{X\in Q_R\,|\,d_\rho(X) \ge \varepsilon\}$, where we used $Q_R \subset \{X\in \rho^{-1}Q\,|\,d_\rho(X)<R+1\}$.

We choose $\rho_0\in (0,1)$ depending only on $n$, $\nu$, $\theta_0$,  $\beta$, and $\gamma$ such that
$$
\varepsilon^{-1} \gamma\left((R+1)\rho_0\right) \le 1.
$$
Next, we consider two cases: (i) $\rho < \rho_0$ and (ii) $\rho \ge \rho_0$.

{\em Case 1: $\rho< \rho_0$.} We note that $f^{\rho,\varepsilon}  \in L_{p_0} (C_R)$ in the non-divergence case, and $f_0^{\rho,\varepsilon}\in L_{p_0} (C_R)$, $f_i^{\rho,\varepsilon}\in L_{2p_0} (C_R)$ in the divergence case.
By Lemma~\ref{pointwise}, there exists a unique solution $w \in W(C_R)$
to $L^{\rho,\varepsilon} w = \rho^2 f^{\rho,\varepsilon}$ in the non-divergence case (or $L^{\rho,\varepsilon} w = \rho^2 f_0^{\rho,\varepsilon}-\rho D_i(f_i^{\rho,\varepsilon})$ in the divergence case) with the zero boundary condition $w=0$ on $\partial_p C_R$.
Moreover, $w$ satisfies  the property \eqref{15} (or \eqref{16}).

Consider the function
\begin{equation}
v^\rho (X):=u^\rho (X)- (\varepsilon\rho)^{\beta}M - w (X)-\sup_{C_R}|w|
\quad\text{on}\quad \rho^{-1} Q'\cap C_R,  \label{vu}
\end{equation}
which may be extended continuously outside $\rho^{-1}Q' \cap C_R$,
and define
\begin{equation*}
V:=\rho^{-1} Q'\cap \{ d_\rho > \varepsilon \}\cap \{v^\rho >0\}.
\end{equation*}
We see that
\[
M = \sup_{X \in Q'} d^{-\beta} (X) u(X)
= \rho^{-\beta} \sup_{ X  \in \rho^{-1} Q'} d_\rho^{-\beta} (X) u^\rho (X),
\]
which implies that
\[
u^\rho = d_\rho^\beta d_\rho^{-\beta} u^\rho \le d_\rho^\beta \rho^\beta M \le (\varepsilon \rho)^\beta M ,
\quad \text{ when } d_\rho \le \varepsilon.
\]

We first assume $v^\rho (0)>0$.
By Lemma \ref{lem2.0} applied to $\rho^{-1}Q$, we have
$$
|C_R \setminus V| \ge |C_R \setminus \rho^{-1}Q| \ge 2^{-n-2}\theta_0|C_R|.
$$
From $u^\rho=0$ on $\partial_p({\rho}^{-1} Q') $ and
$u^\rho \leq (\varepsilon\rho)^\beta M $ on $\{d_\rho \leq \varepsilon \}$,
it follows $v^\rho=0$ on $C_R \cap \partial_p V$.
Moreover, $0 \in \bar{V} \setminus \bar\partial_p V$ because $0\in \overline{\rho^{-1}Q'}\cap \{ d_\rho > \varepsilon \}\cap \{v^\rho >0\}$ and $0\notin \bar\partial_p \left(\rho^{-1} Q'\right)$.
We also observe that
$L^{\rho,\varepsilon} v^\rho \le 0$
in $V$, where we use the condition \eqref{sign}, the definition of $w$, and fact that $L^{\rho,\varepsilon}$ coincides with $L^\rho$ in $\{d_\rho\ge \varepsilon\}\cap \rho^{-1}Q$.
Then by Lemma ~\ref{growth} with $Q = V$,
we obtain
\begin{equation*}
v^\rho (0)
\leq \beta_0 \, \sup_{V\cap C_R}v^\rho
\leq \beta_0 \, \sup_{V\cap C_R}u^\rho,
\end{equation*}
where $\beta_0 =\beta_0 (n,\nu ,\theta _{0}/2^{n+2},1, R)\in (0,1)$.
Since $d_\rho(X) \le R+1$ on $C_R$, we see that
$u^\rho \leq \left( (R+1) \rho\right)^\beta M$
on $V\cap C_R$, which implies $v^\rho (0)\leq  \beta_0 \left((R+1)\rho\right)^{\beta}  M$.
Of course, the last estimate also
holds in the case $v^\rho(0)\leq 0$. From this estimate,
together with \eqref{MU} and \eqref{vu}, it follows that
\begin{equation*}
\rho^{\beta} M = u(0)= u^\rho(0)
\leq \rho^{\beta } \left[ \varepsilon^{\beta }+ (R+1)^\beta\beta_0\right] M
+ 2\sup_{C_R}|w|.
\end{equation*}
By the property \eqref{15} or \eqref{16} of the function $w$ on $C_R$
we have
$$
|w|\leq N_{1}
\|\rho^2 \tilde f_0^\rho\|_{L_{p_0} (C_R)}+N_1\sum_{i=1}^n\|\rho \tilde f_i^\rho\|_{L_{2p_0} (C_R)}
$$
in the divergence case, where
$N_1=N_1(n,\nu, \theta_0, p_0)>0$, and
\begin{equation*}
|w| \leq N_1
\|\rho^2 \tilde f^\rho\|_{L_{p_0} (C_R)}
\end{equation*}
in the non-divergence case, where
$N_1=N_1(n,\nu, \theta_0, p_0, \omega_{a^{\rho,\varepsilon}})>0$.

A direct computation shows that
\begin{align*}
\|\rho^2 \tilde f^\rho\|_{L_{p_0} (C_R)} &= \rho^2 \left( \int_{ Q_{R} \cap \{ d_\rho \ge \varepsilon\}} |f^\rho (Y) |^{p_0} \, dY \right)^{1/{p_0}}
\\
&=  \rho^2 \left( \rho^{-n-2} \int_{ Q_{R\rho} \cap \{ d \ge \varepsilon\rho \}} |f (X) |^{p_0} \, dX \right)^{1/{p_0}}
\\
&\le  \rho^{2-(n+2)/p_0 +\beta -2} \varepsilon^{\beta-2}\left( \int_{ Q_{R \rho} \cap \{ d \ge \varepsilon\rho \}}  | d^{2-\beta} (X)f (X) |^{p_0} \, dX \right)^{1/p_0}
\\
& \le  \rho^{\beta -(n+2)/p_0  } \varepsilon^{\beta-2}\|f\|_{F_{\beta, {p_0}}(Q)} \,
( 2|C_{(R+1)\rho}|)^{1/p_0}.
\end{align*}
Similarly, in the divergence case,
\begin{align*}
\|\rho^2 f_0^\rho\|_{L_{p_0} (Q_2)}&\le \rho^{\beta -(n+2)/p_0  }
\varepsilon^{\beta-2}\|f_0\|_{F_{\beta, {p_0}}(Q)} \,  (2|C_{(R+1)\rho}|)^{1/p_0} ,\\
\|\rho f_i^\rho\|_{L_{2p_0} (Q_2)}&\le \rho^{\beta -(n+2)/(2p_0)  }
\varepsilon^{\beta-1}\|f_i\|_{F_{\beta+1, {2p_0}}(Q)} \,  (2|C_{(R+1)\rho}|)^{1/(2p_0)}.
\end{align*}
Therefore,
\begin{equation*} 
M \leq \left[  \varepsilon^{\beta }+ (R+1)^{\beta } \beta_0 \right] M
+ N_1 \varepsilon^{\beta-2 }
\|f\|_{F(Q)}.
\end{equation*}
From this inequality and
\eqref{eq0728}, 
it follows that $M\leq N
\|f\|_{F_{\beta, q}(Q)}$, where $N=N(n,\nu, \theta_0, \beta, p_0)$, which also depends on $\omega_{a^{\rho,\varepsilon}}$ in the non-divergence case.

{\em Case 2: $\rho\ge \rho_0$.} In this case, it suffices to bound $|u(0)|$ in terms of $f$.
Let $R_0=\diam(Q)$ and
$$
Q^\rho=Q'\cap \{X\in Q\,:\,\dist(X,\partial_p Q)>  \rho/4\}.
$$
In the non-divergence case, let $w$ be the unique solution in $W(C_{R_0})$ of
$$
L^{1,\rho_0/4}w=f^{1,\rho_0/4}\,\,  \text{in}\,\, C_{R_0}, \quad
w = 0\,\,\text{on}\,\,\partial_p C_{R_0}.
$$
By Lemma \ref{pointwise}, we have
\begin{equation}
							\label{eq1110_1}
|w|\le N\|f\|_{L_{p_0}(Q^\rho)},
\end{equation}
where $N=N(n,\nu, \theta_0, \gamma, \beta, p_0,\diam(Q),\omega_{a^{1,\rho_0/4}})$.

Because $L^{1,\rho_0/4}$ coincides with $L$ in $Q^\rho$,
$$
L\Big(\sup_{\partial_p Q^\rho}u^+ +w+N\|f\|_{L_{p_0}(Q^\rho)}\Big)\ge f\quad \text{in}\,\,Q^\rho.
$$
Moreover,
$$
\sup_{\partial_p Q^\rho}u^+ +w+N\|f\|_{L_{p_0}(Q^\rho)} \ge u
\quad\text{on}\,\,\partial_p Q^\rho.
$$
By the comparison principle Corollary \ref{comp} and \eqref{eq1110_1}, we have
$$
u(0)\le \sup_{\partial_p Q^\rho}u^+ + N\|f\|_{L_{ p_0} (Q^\rho)}.
$$
Since $u = 0$ on $\partial_p Q'$, we observe that
$$
\sup_{\partial_p Q^\rho}u^+\le (\rho/4)^\beta M.
$$
Thus,
\begin{align*}
M = \rho^{-\beta}u(0)&\le 4^{-\beta}M+N\rho^{-\beta}\|f\|_{L_{p_0} (Q^\rho) }\\
&\le 4^{-\beta}M+
N\rho^{-\beta}
\left(\frac{\rho}{4}\right)^{\beta-2}
\|f \|_{F(Q)}
| Q |^{1/p_0}\\
&\le  4^{-\beta}M+ N\rho_0^{-2} \|f \|_{F(Q)},
\end{align*}
which gives the desired estimate.

The divergence case is similar.
This finishes the proof of the first assertion.

For the assertion (ii),
we find $r \in (0,1)$ such that $f = 0$ and $|b_i|, |c_i| \le 1$ on $\{X \in Q: d(X) \le r\}$.
Then we take $\beta_1:= \log_{\delta_0} \beta_0$, where $\beta_0 = \beta_0(n,\nu,\theta_0/2,1,1)$ and $\delta_0 = \delta_0(n,\theta_0)$ are the constants from Remark \ref{GLremark}.
We observe that by Remark \ref{GLremark} and an iteration argument, we have
$$
|u(X)|\le N(d(X))^{\beta_1}\quad \forall X\in \bar Q.
$$
Thus, for any $\beta\in (0,\beta_1)$
and $X_0\in \partial_p Q$,
$$
\lim_{Q\ni X\to X_0}d^{-\beta}u(X)=0,
$$
and $d^{-\beta}u\in C(\bar Q)$.  Then we argue as in the
proof of the first assertion with $Q$ in place of $Q'$ with the smaller $\beta_1$ between the above $\beta_1$ and the one from the proof of assertion (i). The lemma is proved.
\end{proof}

The following 1-D example suggests that the growth condition on $b_i$
and $c_i$  near the boundary in the proposition above is in some sense optimal even in the elliptic case.
\begin{ex}
We take a nonnegative smooth  function $\eta$ on $\bR$ such that $\eta(x)=1$ for $x\le 1/3$ and $\eta=0$ for $x\ge 1/2$.
Consider $u(x) := - \eta(x)(\ln x)^{-1}$. Clearly, $u\in C([0,1/2])\cap C^\infty((0,1/2))$ and $u(0)=u(1/2)=0$. By a direct computation, it is easily seen that $u$ satisfies
\[
u'' + bu'=f\quad \text{in}\,\, (0,1/2),
\]
where $ b(x):= x^{-1} \left( 1 + 2(\ln x)^{-1} \right)  $ for $x\in (0,1/3)$ and $f\in L_\infty([0,1])$.
Note that $|b| \sim  x^{-1}$ near 0 and $\lim_{x\searrow 0}d^{-\beta} u=\infty$ for any $\beta>0$.
\end{ex}

\section{Proofs of the main theorems}   \label{proofs}

Now we are ready to prove the main results of the paper, Theorem~\ref{main}.
\begin{proof}[Proof of Theorem~\ref{main}]
We first prove the non-divergence case. Recall the notation introduced at the beginning of the previous section. For $k=1,2,\ldots$, denote
$$
Q_k=\{X\in Q\,|\,\dist(X,\partial_p Q)>1/k\},
$$
$L^k=L^{1,1/k}$, and $f^k=f^{1,1/k}$.
We shall find a unique solution $u_k$ to the equation
\begin{equation}
						\label{eq1023_1}
L^k u_k =f^k
\quad \text{in} \,\, Q,
\quad
u_k|_{\partial_p Q} = 0
\end{equation}
using Perron's method.
Since the $W^{2,1}_p$-solvability in cylindrical domains for parabolic operators with VMO coefficients is available (cf. \cite{DK09}) and $f^k$ are bounded in $L_{p_0} (Q)$,
in order to find a unique solution $u_k\in W(Q)$ to the equation \eqref{eq1023_1} using Perron's method, it suffices to construct a barrier function $\psi_k\in W(Q)$ satisfying
\begin{equation}
                                        \label{eq12.48}
L^k\psi_k\ge 1\quad\text{in}\,\,Q,\quad \psi_k=0\quad\text{on}\,\,\partial_p Q.
\end{equation}
For this purpose, we use an idea in \cite{Kr89}. Let $w\in W(Q)$ be the solution to the heat equation
\begin{equation}
                                            \label{eq11.20}
w_t-\Delta w=1\quad \text{in}\,\,Q,\quad
w=0\quad \text{on} \,\,\partial_p Q.
\end{equation}
The existence of such $w$ is due to \cite{EG82} and the fact that any domain satisfying the exterior measure condition also satisfies Wiener's criterion. See Appendix \ref{(A)domain}.
By the maximum principle, $w$ is strictly positive in $Q$. Denote $R=\diam(Q)$ and without loss of generality, we assume $Q\subset C_R$. Let $v=\cosh(\mu R)-\cosh(\mu|x|)$. Then $v\ge 0$ in $Q$, and a straightforward calculation shows that
\begin{equation}
                            \label{eq12.46}
L^k v\ge 1\quad\text{in}\,\,Q
\end{equation}
for $\mu$ sufficiently large. Denote $F(x,y):=\min\{x,y\}$ and let $F^{(\varepsilon)}$ be the standard mollification of $F$. Clearly, for any $\varepsilon>0$, $F^{(\varepsilon)}$ is a smooth function and
\begin{equation}
                                \label{eq12.45}
D_xF^{(\varepsilon)}\ge 0, \quad D_yF^{(\varepsilon)}\ge 0,\quad
D_xF^{(\varepsilon)}+D_yF^{(\varepsilon)}=1,\quad D^2 F^{(\varepsilon)}\le 0.
\end{equation}
Now we choose $\lambda$ sufficiently large such that $\lambda w\ge v$ in $Q_{2k}$, and let
$$
\psi_k=F^{(\varepsilon)}((1+\lambda)w,v).
$$
Then using the definition of $L^k$, \eqref{eq11.20}, \eqref{eq12.46}, and \eqref{eq12.45}, it is easily seen that $\psi_k$ satisfies \eqref{eq12.48} if $\varepsilon$ is sufficiently small.

By Proposition~\ref{UF} (ii),
there exists $N=N(n,\nu, \theta_0, \gamma, \diam(Q), \beta, p_0, \omega_{a_k}^*)
$ such that, for any $\beta \in (0, \beta_1)$,
\begin{equation*}
 \sup_{Q} d^{-\beta} u_k
\le N \|f\|_{ F(Q)},
\end{equation*}
where  $\beta_1 =\beta_1(n,\nu, \theta_0) \in (0,1]$ is the constant from
Proposition~\ref{UF}.
Note that by the definition of $L^k$ the choice of $\beta_1$ can be made independent of $k \in \bN$.
By using the simple equality
$$
\chi_2\big(\chi_1 A+(1-\chi_1)B\big)+(1-\chi_2)B=\chi_1\chi_2 A+(1-\chi_1\chi_2)B
$$
and the definition of $\omega_{a_k}^*$,
we see that $\omega_{a_k}^*$ has an upper bound
$$
\omega_{a_k}^*(r)\le N(\omega_a(r)+r/\varepsilon+r/\rho_0),
$$
which is independent of $k$. Here $\varepsilon$ and $\rho_0$ are constants from Proposition~\ref{UF}.
With $-u_k$ in place of $u_k$, we  obtain
$$
\inf_Q d^{-\beta} u_k \ge - N
\|f\|_{F(Q)}.
$$
Therefore, for any $X \in Q$, we have
\begin{equation}	\label{eq1001}
|u_k| \le N d^\beta \|f\|_F(Q),
\end{equation}
where $N$ is independent of $k$.

For an arbitrary parabolic cylinder $C \Subset_p Q$, thanks to the local $W^{1,2}_p$ estimate and the Sobolev embedding theorem, each $u_k$ is H\"older continuous in $C$ with a uniform H\"older norm.
Then by applying the Arzela--Ascoli theorem to the sequence $\{u_k\}$ on each $Q_m$, $m \in \bN$,
and using Cantor's diagonal argument, we find a subsequence, still denoted by $\{u_k\}$, converging locally uniformly to a function $u$ in $Q$.
Moreover, by the inequality \eqref{eq1001},
  $u \in C(\bar{Q})$ with $u = 0$ on $\partial_p Q$.
Now we show that $Lu = f$ in $C$.
One can find another parabolic cylinder $C'$ such that $C \Subset_p
 C' \Subset_p Q$.
Then for sufficiently
large $k$, we have
$$
L(u_{k+1}-u_k) = 0
$$
in $C'$.
By the  interior $W^{1,2}_p$ estimate, we have
$$
\|u_{k+1} - u_k\|_{W^{1,2}_{p_0}(C)} \le N \|u_{k+1} - u_k\|_{L_{p_0}(C')}.
$$
From this inequality and the fact that $u_k$ converges uniformly to $u$ in $C'$,
it follows that $u \in W_{p_0}^{1,2}(C)$ and $Lu = f$ in $C$.
Since $C \Subset_p Q$ is arbitrary, we have proved that $u \in W(Q)$ and $Lu = f$ in $Q$.
The uniqueness assertion follows immediately from the comparison principle, Corollary~\ref{comp}.

Now we deal with the divergence case.
Take the standard convolutions of coefficients and data functions $f$ with a non-negative mollifier, so that the mollifications $a_{ij}^{(k)}$, $b_i^{(k)}$, $c_i^{(k)}$,  $c_0^{(k)}$, $f_0^{(k)}$, and $f_i^{(k)}$ are in $C^\infty(\bar{Q}_{2k})$.
In particular, the mollified coefficients converge to their original ones locally in $L_2(Q)$. Define
$$
a_{ij}^k :=\big(a_{ij}^{(k)}\big)^{1,1/k},\quad
b^k_i:=\big(b_i^{(k)}\big)^{1,1/k},
$$
$$
c^k_i:=\big(c_i^{(k)}\big)^{1,1/k},
\quad
c^k_0:=\big(c_0^{(k)}\big)^{1,1/k},
$$
$$
f^k_0:=\big(f_0^{(k)}\big)^{1,1/k},
\quad
f^k_i:=\big(f_i^{(k)}\big)^{1,1/k},
$$
where each term is in $C^\infty(\bar{Q})$.
Let $L^k$ be the divergence type operator with the coefficients $a_{ij}^k$, $b_i^k$, $c_i^k$, and $c_0^k$.
Then we turn $L^k$ into a non-divergence type operator $\tilde{L}^k$ by defining
$$
\tilde{L}^k = \frac{\partial}{\partial t} - \tilde{a}_{ij}^k D_{ij} + \tilde{b}_i^kD_i + \tilde{c}_0^k,
$$
where
$$
\tilde{a}_{ij}^k = a_{ij}^k,
\quad
\tilde{b}_i^k = - D_j a_{ij}^k + b_i^k - c_i^k,
\quad
\tilde{c}_0^k = c_0^k - D_i c_i^k.
$$
Note that, due to the condition \eqref{D-sign_01}, we have $\tilde{c}_0^k \ge 0$. Indeed,
for any nonnegative function $\phi \in C_0^\infty(Q)$,
\begin{align*}
\int_Q \tilde{c}_0^k \phi \, dX
&= \int_Q \left(c_0^k \phi + c_i^k D_i \phi\right) \, dX\\
&= \int_Q \left(c_0^{(k)}\eta^{1/k} \phi +c_i^{(k)} D_i \left(\eta^{1/k}\phi \right) \right) \, dX \ge 0,
\end{align*}
which implies $\tilde{c}_0^k \ge 0$ on $Q$.
Then using the argument described above for the non-divergence case, we find a unique $u_k$ satisfying $\tilde{L}^k u_k = f_0^k - D_i f_i^k$ in $Q$ and $u_k|_{\partial_p Q} = 0$.
Since the coefficients and $f_0^k - D_i f_i^k$ are infinitely differentiable, $u_k$ is infinitely differentiable in $Q$ and continuous on $\bar{Q}$. Furthermore, $u_k$ also satisfies the divergence type equation $L^k u_k = f_0^k - D_i f_i^k$ in $Q$. That is,
\begin{multline}
							\label{eq1024_1}
\int_Q \left( - u_k D_t \phi  + a_{ij}^k   D_i u_k  D_j \phi+
   b_i^k D_i u_k \phi +  c_i^k u_k D_i \phi +c_0^k u_k \phi \right) \, dX
\\
= \int_Q \left(f_0^k \phi +  f_i^k D_i \phi \right)  \, d X
\end{multline}
for any $ \phi \in C_0^{\infty}(Q)$.
Similarly as in the non-divergence case above, with the De Giorgi--Nash--Moser estimate we find $u \in C(\bar{Q})$ with $u = 0$ on $\partial_p Q$, which is the local uniform limit of $\{u_k\}$.
Let us now prove that $u$ belongs to $H(Q)$ and satisfies the desired equation.
For cylinders $C \Subset_p C' \Subset_p Q$, by the standard local $L_2$-estimate,
$$
\|Du_k\|_{L_2(C)} \le N \|u_k\|_{L_2(C')} + N \|f_0^k\|_{L_{2(n+2)/(n+4)}(C')} + N \|f_i^k\|_{L_2(C')},
$$
where $N$ is independent of $k \in \bN$. Moreover, for sufficiently large $k$, the right-hand side is dominated by a quantity independent of $k$, so after taking a subsequence, $Du_k$ converges to $Du$ weakly. Then
$$
\int_Q a_{ij}^k D_i u_k D_j \phi \, dX \to \int_Q a_{ij} D_i u D_j \phi \, dX,
$$
where $\phi\in C_0^\infty(C)$.
More precisely,
$$
\left|\int_Q a_{ij}^k D_i u_k D_j \phi \, dX - \int_Q a_{ij} D_i u_k D_j \phi \, dX\right|
\le \|a_{ij}^k - a_{ij}\|_{L_2(C)}
\|Du_k D_j \phi\|_{L_2(C)} \to 0
$$
because $a_{ij}^k$ converges to $a_{ij}$ locally in $L_2$ and the $L_2(C)$-norm of $Du_k D_j \phi$ is uniformly bounded, and
$$
\int_Q a_{ij} D_i u_k D_j \phi \, dX \to
\int_Q a_{ij} D_i u D_j \phi \, dX
$$
because $D u_k$ converges to $Du$ weakly and $a_{ij} D_j \phi \in L_2(C)$.
The same reasoning is applied to the other terms
in \eqref{eq1024_1}.
Therefore,  by letting $k \to \infty$ in \eqref{eq1024_1}, we see that $u$ satisfies the divergence type equation $L u = f_0 - D_i f_i$. The fact that $u \in H(Q)$ follows from $Du \in L_{2,\text{loc}}(Q)$ and the equation.
The uniqueness follows from Corollary \ref{comp} as in the non-divergence case.
\end{proof}

\begin{rem}
For the non-divergence case, Assumption~\ref{RA}  is needed for the existence of the solution $u_k$
during the proof.
If one  can generalize the solvability in  smooth domains with more general coefficients (see, for instance, \cite{Kim08, DY10, Dg08}), our main results can also be improved.
\end{rem}

\begin{rem}
Following the proof of Theorem  \ref{main}, one can obtain a corresponding result for elliptic equations in a domain $\Omega$ in $\bR^n$ satisfying the exterior measure condition. The solution space is $W(\Omega):=W^2_{p_0}(\Omega)\cap C(\bar\Omega)$ in the non-divergence case for $p_0\in (n/2,\infty)$, and $W(\Omega):=W^1_{2}(\Omega)\cap C(\bar\Omega)$ in the divergence case. We leave the details to the interested reader.
\end{rem}

\appendix
\section{Regularity of (A) domains}
\label{(A)domain}
In this appendix, we will show that any (A) domain is regular, i.e., it satisfies Wiener's criterion.
We recall the definition of (A) domains:
\begin{defn} 
An open set $Q \subset \mathbb R  \sp {n+1}$ satisfies the condition (A)
if there exists a constant $\theta _{0}\in (0,1)$
such that for any $Y_0 \in \partial_p Q $ and $r>0$,
we have  $|C_{r}(Y_0)\setminus Q |>\theta _{0}|C_{r}|$.
\end{defn}
Now we introduce some standard notation and definitions.
One may consult, for instance,  \cite{Ev}.

Let $F$ be  the fundamental solution of the heat equation, namely,
$$
F(x,t) =
\left\{
\begin{aligned}
(4 \pi t)^{-n/2} \exp \left( - \frac{|x|^2}{4t} \right ), \quad t > 0,
\\
0, \quad t \le 0 ,
\end{aligned}
\right.
$$
and $E(x,t,r)$ be a heat ball of level set $r$ centered at $(x,t)$,
\begin{align*}
E(x,t,r):=\{ (y,s) \in \mathbb{R}^{n+1} \,|\,    F(x-y, t-s) \ge r \} .
\end{align*}
Now we list some useful properties of heat balls:
$$
E(r):= E(0,0,r) =  \{ (-x,-t) \in \bR^{n+1} | F(x,t) \ge r \},
$$
$$
F(r^{-1/n}x, r^{-2/n} t) = r F(x,t),
\quad
|E(r)| = r^{-1-2/n} |E(1)|<\infty,
$$
\begin{equation}
                            \label{eq12.12}
(y,s) \in E(1) \Leftrightarrow (x,t) \in E(r),
\quad
(x,t) = (r^{-1/n}y, r^{-2/n}s),
\end{equation}
$$
E(1) =  \Big\{(x,t) \in \bR^{n+1} | t < 0, (- 4\pi t)^{-n/2}
e^{\frac{|x|^2}{4t}}  \ge 1 \Big\} .
$$

\begin{defn} 
An open set $Q \subset \mathbb R  \sp {n+1}$ satisfies the condition (B)
if there exists a constant $\theta _{1}\in (0,1)$
such that, for any $Y_0 \in \partial_p Q $ and $\lambda >1$, there exists $k_0 \in \mathbb{N}$ satisfying
$$
| \{ (\lambda^{k_0})^{k+1} \ge F(Y_0 -Y) \ge (\lambda^{k_0})^k  \}  \setminus Q |
 >\theta _{1}   | \{ (\lambda^{k_0})^{k+1} \ge F(Y_0 -Y) \ge (\lambda^{k_0})^k  \}|
$$
for all $k \in \bN$.
\end{defn}

We will show in Theorem \ref{thmA5} that the condition (A)  implies the condition (B).

For each $ -\frac{1}{4 \pi} <  t <0 $, $(t,x) \in  E(1)$ if and only if
\begin{align*}
|x|^2 \le  2 n t \ln (-4 \pi t).
\end{align*}
This implies that in a neighborhood of the origin, the boundary of $E(1)$ can be represented as $t=\Phi(x)$, where $\Phi$ is a $C^2$ function and satisfies
\begin{equation}
                                    \label{eq4.21}
\Phi(0)=|D\Phi(0)|=|D^2\Phi(0)|=0.
\end{equation}

\begin{lemma}
\label{lem0510_1}
For any $\theta_0 >0$,
there exists $\theta = \theta(n, \theta_0) \in (0, 1 )$
 such that, for any $r > 0$,
\begin{equation}
							\label{eq0509_3}
\left| C_{\theta r^{-1/n}} \setminus E(r) \right| \le \frac{\theta_0}{4}
\left| C_{\theta r^{-1/n}} \right|.
\end{equation}
\end{lemma}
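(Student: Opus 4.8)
The plan is to remove the scaling parameter $r$ first, and then to reduce the inequality to a one–dimensional estimate obtained by slicing in time. By the scaling relation \eqref{eq12.12}, the linear map $T_r\colon (y,s)\mapsto (r^{-1/n}y,\,r^{-2/n}s)$ carries $E(1)$ bijectively onto $E(r)$ and carries the standard cylinder $C_\theta=B_\theta\times(-\theta^2,0)$ onto $C_{\theta r^{-1/n}}$, with constant Jacobian $r^{-1-2/n}$. Hence $|C_{\theta r^{-1/n}}\setminus E(r)|=r^{-1-2/n}\,|C_\theta\setminus E(1)|$ and $|C_{\theta r^{-1/n}}|=r^{-1-2/n}\,|C_\theta|$, so \eqref{eq0509_3} for all $r>0$ is equivalent to the single inequality
\[
|C_\theta\setminus E(1)|\le \tfrac{\theta_0}{4}\,|C_\theta|,
\]
which I only need to verify for a suitable $\theta=\theta(n,\theta_0)\in(0,1)$.

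For the slicing step, recall that for $-\tfrac1{4\pi}<t<0$ one has $E(1)\cap\{t=\mathrm{const}\}=\{\,|x|^2\le 2nt\ln(-4\pi t)\,\}$; writing $t=-\tau$, this is the ball $B_{\rho(\tau)}$ with $\rho(\tau):=\bigl(2n\tau\ln\tfrac1{4\pi\tau}\bigr)^{1/2}$, which is positive for $\tau\in(0,\tfrac1{4\pi})$. I would impose $\theta\le \tfrac1{4\pi}$ so that $\theta^2<\tfrac1{4\pi}$ and this description is valid on all of the interval $(0,\theta^2)$. Then Fubini's theorem gives, with $\omega_n:=|B_1|$,
\[
|C_\theta\setminus E(1)|=\int_0^{\theta^2}\bigl|B_\theta\setminus B_{\rho(\tau)}\bigr|\,d\tau
\le \omega_n\theta^{\,n}\,\bigl|\{\,\tau\in(0,\theta^2):\rho(\tau)<\theta\,\}\bigr|,
\]
since $|B_\theta\setminus B_{\rho(\tau)}|$ vanishes once $\rho(\tau)\ge\theta$ and is at most $\omega_n\theta^{\,n}$ otherwise.

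It remains to show the ``bad'' time set is very short. If $\tau\in(0,\theta^2)$ and $\rho(\tau)<\theta$, then $2n\tau\ln\tfrac1{4\pi\tau}<\theta^2$; since $\tau<\theta^2\le\theta/(4\pi)$ (using $\theta\le \tfrac1{4\pi}$) we have $\ln\tfrac1{4\pi\tau}\ge\ln\tfrac1\theta>0$, hence $2n\tau\ln\tfrac1\theta<\theta^2$, i.e. $\tau<\theta^2/\bigl(2n\ln(1/\theta)\bigr)$. Thus that set has measure at most $\theta^2/\bigl(2n\ln(1/\theta)\bigr)$, and since $|C_\theta|=\omega_n\theta^{\,n+2}$ we obtain
\[
\frac{|C_\theta\setminus E(1)|}{|C_\theta|}\le\frac{1}{2n\ln(1/\theta)}.
\]
Choosing $\theta:=\min\{\,e^{-2/(n\theta_0)},\ 1/(4\pi)\,\}\in(0,1)$, which depends only on $n$ and $\theta_0$, makes the right-hand side at most $\theta_0/4$, and undoing the scaling of the first step yields \eqref{eq0509_3} for every $r>0$.

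The only substantive point — everything else being routine bookkeeping — is the geometric fact exploited in the third step: near its vertex at the origin the heat ball $E(1)$ widens in the spatial variable like $\sqrt{\tau\ln(1/\tau)}$, i.e. strictly faster than the $\sqrt\tau$ growth of a parabolic cap. Consequently it already engulfs the fixed-radius slice $B_\theta$ after a time of order $\theta^2/\ln(1/\theta)$ rather than $\theta^2$, and this logarithmic gain is precisely what forces $|C_\theta\setminus E(1)|/|C_\theta|\to0$ as $\theta\to0^+$; a crude ``parabolic'' comparison alone would only give an $O(1)$ bound, which would not suffice. (Equivalently, one may phrase this via the graph $t=\Phi(x)$ of the upper boundary of $E(1)$ from \eqref{eq4.21} and estimate $\int_{B_\theta}|\Phi(x)|\,dx$, but the slicing computation above is self-contained.)
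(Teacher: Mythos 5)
Your proof is correct, and it follows essentially the same route as the paper: reduce to $r=1$ by the parabolic scaling \eqref{eq12.12}, then exploit the super-parabolic flatness of $E(1)$ at its vertex. The only difference is that you make the second step quantitative by slicing in time with the explicit radius $\rho(\tau)=\bigl(2n\tau\ln\frac{1}{4\pi\tau}\bigr)^{1/2}$ and exhibit an explicit $\theta$, whereas the paper simply cites the qualitative graph statement \eqref{eq4.21}; as you note yourself, the two formulations are equivalent.
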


\begin{proof}
Due to the scaling property of $ x \to rx$, $ t \to r^2 x$
of the standard cylinder and the heat ball,
it suffices to prove \eqref{eq0509_3} when $r = 1$.
In this case, \eqref{eq0509_3} follows immediately from \eqref{eq4.21}.
\end{proof}

\begin{lemma}
          			\label{lem0510_2}
Let $\lambda > 1$ and $\theta \in (0,1)$ be fixed numbers.
Then one can choose a large $k_0 = k_0(n, \lambda, \theta, \theta_0) \in \bN$
such that, for any $k \in \bN$,
$$
\left| E \left( (\lambda^{k_0})^{k+1}\right) \right|
\le \frac{\theta_0}{4} \left| C_{\theta \lambda^{-k_0 k/n}} \right|.
$$
\end{lemma}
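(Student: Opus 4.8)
The plan is to reduce the claimed inequality to one that does not involve $k$, by computing both volumes explicitly with the scaling identities already recorded in the excerpt, and then to choose $k_0$ large enough to handle the remaining $k$-free estimate.

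First I would recall that $|E(r)| = r^{-1-2/n}|E(1)|$ and that for a standard parabolic cylinder $|C_\rho| = |B_1|\,\rho^{n+2}$. Setting $r = (\lambda^{k_0})^{k+1} = \lambda^{k_0(k+1)}$, the left-hand side becomes
\[
\bigl|E(\lambda^{k_0(k+1)})\bigr| = \lambda^{-k_0(k+1)(n+2)/n}\,|E(1)|,
\]
while the right-hand side is
\[
\frac{\theta_0}{4}\bigl|C_{\theta\lambda^{-k_0 k/n}}\bigr| = \frac{\theta_0}{4}\,|B_1|\,\theta^{n+2}\,\lambda^{-k_0 k(n+2)/n}.
\]
Dividing the desired inequality through by the common factor $\lambda^{-k_0 k(n+2)/n}$, it becomes equivalent to
\[
\lambda^{-k_0(n+2)/n}\,|E(1)| \le \frac{\theta_0}{4}\,|B_1|\,\theta^{n+2},
\]
which is the key point: the dependence on $k$ has cancelled entirely.

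Since $\lambda > 1$ and $n+2 > 0$, the left-hand side of this reduced inequality tends to $0$ as $k_0 \to \infty$, so it suffices to pick any integer
\[
k_0 \ge \frac{n}{n+2}\,\log_\lambda\!\left(\frac{4\,|E(1)|}{\theta_0\,|B_1|\,\theta^{n+2}}\right),
\]
which depends only on $n$ (through $|E(1)|$ and $|B_1|$), $\lambda$, $\theta$, and $\theta_0$. This $k_0$ then works simultaneously for every $k \in \bN$, completing the proof. There is no substantial obstacle here; the only care needed is in bookkeeping the exponents of $\lambda$ and verifying that the $k$-dependent factors on the two sides match exactly so that they cancel.
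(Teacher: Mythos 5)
Your proposal is correct and follows essentially the same route as the paper: both compute $|E(r)|$ and $|C_\rho|$ via the scaling identities, observe that the $k$-dependent factor $\lambda^{-k_0 k(n+2)/n}$ cancels from the ratio, and then choose $k_0$ large using $\lambda>1$. The only cosmetic difference is that you make the threshold for $k_0$ explicit (and write $|B_1|$ where the paper writes $|C_1|$, which are equal), but the argument is the same.
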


\begin{proof}
Set $r : = \lambda^{k_0}$.
Then
$$
\left| E \left( (\lambda^{k_0})^{k+1} \right) \right|
= \left| E \left( r^{k+1} \right) \right|
= r^{(k+1)(-1-2/n)} |E(1)|
$$
and
$$
\left| C_{\theta \lambda^{-k_0 k/n}} \right| =
\left| C_{\theta r^{-k/n}} \right| = \left(\theta r^{-k/n}\right)^{n+2}|C_1|.
$$
Thus,
$$
\frac{\left| E \left( r^{k+1} \right) \right|}{\left| C_{\theta r^{-k/n}} \right|}
= \frac{r^{-1-2/n} | E(1)|}{\theta^{n+2} |C_1|}
= \frac{1}{(\lambda^{k_0})^{1+2/n}} \, \frac{|E(1)|}{\theta^{n+2} |C_1|},
$$
which can be made small if $\lambda > 1$ and $k_0$ is sufficiently large.
The lemma is proved.
\end{proof}

\begin{theorem}[Condition (A) implies Condition (B)]            \label{thmA5}
Let $Q$ be  a (A)-domain in $\bR^{n+1}$. For any $Y_0 \in \partial_p Q$, and
$\lambda > 1$, there exist $k_0 = k_0(n, \lambda, \theta_0) \in \bN$
and  $\theta_1 = \theta_1(n, \theta_0) \in (0,1)$ such that
$$
\left| \left\{ (\lambda^{k_0})^{k+1} \ge F(Y_0-Y)
\ge (\lambda^{k_0})^k \right\} \setminus Q \right|
\ge \theta_1 \left| \left\{ (\lambda^{k_0})^{k+1}
\ge F(Y_0-Y) \ge (\lambda^{k_0})^k \right\} \right|
$$
for all $k \in \bN$.
\end{theorem}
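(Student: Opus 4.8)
The plan is to obtain condition (B) directly from condition (A) at the point $Y_0$, by comparing, for each $k\in\bN$, the ``heat annulus''
\[
A_k:=\{Y\,:\,(\lambda^{k_0})^k\le F(Y_0-Y)\le(\lambda^{k_0})^{k+1}\}
\]
with a suitably dilated standard cylinder $C_{\rho_k}(Y_0)$, $\rho_k:=\theta(\lambda^{k_0})^{-k/n}$, and invoking Lemmas~\ref{lem0510_1} and~\ref{lem0510_2}. The constants are selected in this order: first fix $\lambda>1$; then take $\theta=\theta(n,\theta_0)\in(0,1)$ from Lemma~\ref{lem0510_1}; then choose $k_0=k_0(n,\lambda,\theta,\theta_0)=k_0(n,\lambda,\theta_0)$ from Lemma~\ref{lem0510_2} applied with this $\theta$; finally $\theta_1$ is read off at the end and depends only on $n$ and $\theta_0$. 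Writing $E(Y_0,s)=Y_0+E(s)$ for the heat ball of level $s$ centered at $Y_0$, one has, up to a null set, $A_k=E(Y_0,(\lambda^{k_0})^k)\setminus E(Y_0,(\lambda^{k_0})^{k+1})$; in particular $A_k\subset E(Y_0,(\lambda^{k_0})^k)$.

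The key step is a three-way decomposition of $C_{\rho_k}(Y_0)\setminus Q$: for a.e.\ $Y$ in this set, either $F(Y_0-Y)<(\lambda^{k_0})^k$, so $Y\in C_{\rho_k}(Y_0)\setminus E(Y_0,(\lambda^{k_0})^k)$; or $F(Y_0-Y)>(\lambda^{k_0})^{k+1}$, so $Y\in E(Y_0,(\lambda^{k_0})^{k+1})$; or $Y\in A_k\setminus Q$. Hence
\begin{multline*}
|C_{\rho_k}(Y_0)\setminus Q|\le|C_{\rho_k}(Y_0)\setminus E(Y_0,(\lambda^{k_0})^k)|\\
+|E(Y_0,(\lambda^{k_0})^{k+1})|+|A_k\setminus Q|.
\end{multline*}
Condition (A) at $Y_0$, applied with radius $\rho_k>0$, bounds the left side below by $\theta_0|C_{\rho_k}|$. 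Lemma~\ref{lem0510_1}, translated to $Y_0$ and applied with level $s=(\lambda^{k_0})^k$ (so that $\theta s^{-1/n}=\rho_k$), bounds the first term on the right by $\frac{\theta_0}{4}|C_{\rho_k}|$. Lemma~\ref{lem0510_2} bounds the second term, $|E(Y_0,(\lambda^{k_0})^{k+1})|=|E((\lambda^{k_0})^{k+1})|$, by $\frac{\theta_0}{4}|C_{\rho_k}|$. Subtracting then gives $|A_k\setminus Q|>\frac{\theta_0}{2}|C_{\rho_k}|$.

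It remains to replace $|C_{\rho_k}|$ by a constant multiple of $|A_k|$. Using $|E(s)|=s^{-1-2/n}|E(1)|$ and $(n+2)/n=1+2/n$, we have $|A_k|\le|E((\lambda^{k_0})^k)|=(\lambda^{k_0})^{-k(1+2/n)}|E(1)|$, while $|C_{\rho_k}|=\theta^{n+2}(\lambda^{k_0})^{-k(n+2)/n}|C_1|$, so $|C_{\rho_k}|\ge\theta^{n+2}|C_1|\,|E(1)|^{-1}|A_k|$. Therefore $|A_k\setminus Q|>\theta_1|A_k|$ with
\[
\theta_1:=\min\left\{\frac12,\ \frac{\theta_0\,\theta^{n+2}|C_1|}{2|E(1)|}\right\}\in(0,1),
\]
which depends only on $n$ and $\theta_0$; since none of $\theta$, $k_0$, $\theta_1$ depends on $k$, the estimate holds for every $k\in\bN$, which is condition (B).

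I do not anticipate a serious obstacle: once Lemmas~\ref{lem0510_1} and~\ref{lem0510_2} are in place the argument is essentially bookkeeping. The two points that need care are (a) the order of the choices — $\theta$ must be fixed before $k_0$, since Lemma~\ref{lem0510_2} takes $\theta$ as an input — and (b) checking that the two ``error'' heat-ball terms are \emph{simultaneously} $\le\frac{\theta_0}{4}|C_{\rho_k}|$ with constants \emph{uniform in $k$}: the first because $E(1)$ is $C^2$ and flat at its vertex (this is Lemma~\ref{lem0510_1}, and the reason \eqref{eq4.21} was recorded), the second because the relative volume $|E((\lambda^{k_0})^{k+1})|/|C_{\rho_k}|$ equals $(\lambda^{k_0})^{-(1+2/n)}|E(1)|/(\theta^{n+2}|C_1|)$, independent of $k$, and hence is killed by taking $k_0$ large (Lemma~\ref{lem0510_2}).
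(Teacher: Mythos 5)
Your proposal is correct and follows essentially the same route as the paper: the same choice order ($\theta$ from Lemma \ref{lem0510_1}, then $k_0$ from Lemma \ref{lem0510_2}), condition (A) applied at radius $\theta(\lambda^{k_0})^{-k/n}$, both heat-ball error terms absorbed at level $\frac{\theta_0}{4}|C_{\rho_k}|$, and the same final constant $\theta_1=\frac{\theta_0\theta^{n+2}}{2}\,\frac{|C_1|}{|E(1)|}$ (your cap at $\tfrac12$ is harmless). The only difference is cosmetic bookkeeping: you package the paper's two two-way estimates into a single three-way decomposition of $C_{\rho_k}(Y_0)\setminus Q$.
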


\begin{proof}
Without loss of generality,  we assume that $Y_0 = (0,0) \in \partial_p Q$.
For a given $\lambda > 1$, take $k_0$ from Lemma \ref{lem0510_2},
where $\theta=\theta(n,\theta_0)$ is a number from Lemma \ref{lem0510_1}.
Set $r = \lambda^{k_0}$. Then
$$
\left\{ (\lambda^{k_0})^{k+1} > F(-Y) \ge (\lambda^{k_0})^k \right\}
= \left\{ r^{k+1} > F(-Y) \ge r^k \right\}
= E(r^k) \setminus E(r^{k+1}).
$$
Thus
$$
\left\{ (\lambda^{k_0})^{k+1} > F(-Y) \ge (\lambda^{k_0})^k \right\} \setminus Q
= \left[E(r^k) \setminus Q\right] \setminus \left[E(r^{k+1}) \setminus Q \right]
$$
and
\begin{multline}
							\label{eq0510_3}
\left| \left\{ (\lambda^{k_0})^{k+1} \ge F(-Y) \ge (\lambda^{k_0})^k \right\}
\setminus Q \right|
\ge \left|E(r^k) \setminus Q\right| - \left|E(r^{k+1}) \setminus Q \right|
\\
\ge \left|E(r^k) \setminus Q\right| - \left|E(r^{k+1}) \right|
\ge \left|E(r^k) \setminus Q\right| - \frac{\theta_0}{4} \left| C_{\theta r^{-k/n}}
\right|,
\end{multline}
where the last inequality is due to Lemma \ref{lem0510_2}.

On the other hand,
$$
\left|C_{\theta r^{-k/n}} \setminus Q\right|
= \left| \left(C_{\theta r^{-k/n}} \cap E(r^k) \right) \setminus Q \right| + \left| \left(C_{\theta r^{-k/n}}\setminus E(r^k) \right) \setminus Q \right|
$$
$$
\le \left| E(r^k) \setminus Q \right|
+ \left| C_{\theta r^{-k/n}}\setminus E(r^k) \right|
\le \left| E(r^k) \setminus Q \right|
+ \frac{\theta_0}{4} \left| C_{\theta r^{-k/n}} \right|,
$$
where the last inequality is due to Lemma \ref{lem0510_1}.
Along with \eqref{eq0510_3} and Definition \ref{A},
\begin{align*}
&\left| \left\{ (\lambda^{k_0})^{k+1} \ge F(-Y) \ge (\lambda^{k_0})^k \right\}
\setminus Q \right|
\ge \left|C_{\theta r^{-k/n}} \setminus Q\right| - \frac{\theta_0}{2}
\left| C_{\theta r^{-k/n}} \right|\\
&\,\ge \frac{\theta_0}{2} \left| C_{\theta r^{-k/n}} \right|
= \frac{\theta_0}{2} \,
\frac{\left| C_{\theta r^{-k/n}} \right|}{\left|E(r^k)\right|}
\, \left|E(r^k)\right|
= \frac{\theta_0  \theta^{n+2} }{2} \,  \frac{|C(1)|}{|E(1)|}
\, \left|E(r^k)\right|\\
&\,\ge \theta_1 \left| \left\{ (\lambda^{k_0})^{k+1} \ge F(Y_0-Y)
\ge (\lambda^{k_0})^k \right\} \right|,
\end{align*}
where
$$
\theta_1 = \frac{\theta_0 \theta^{n+2} }{2} \, \frac{|C(1)|}{|E(1)|}.
$$
That is, $\theta_1$ depends only on $n$ and $\theta_0$.
\end{proof}

We denote
$$
V_2( \mathbb{R}^{n+1} )=\{u\,|\,\nabla u\in L_2(\bR^{n+1}),\|u(\cdot,t)\|_{L_2(\bR^n)}\in L_\infty(\bR)\}.
$$
For a compact set $K$ in $\mathbb{R}^{n+1}$, recall the thermal capacity:
$$
\text{cap} (K)=\sup\{\mu(\bR^{n+1})\,|\,\mu\in M(K),F* \mu\le 1\},
$$
where $M(K)$ is the set of all nonnegative Radon measure supported in $K$, and the parabolic capacity:
\begin{align*}
\Gamma (K) = \inf \left\{  \sup_t \int_{\mathbb{R}^n}  u^2 (x,t)\,dx +
\int_{ \mathbb{R} } \int_{  \mathbb{R}^n } |\nabla u|^2 \,dX
\right\} ,
\end{align*}
where the function $u$ is taken over all functions in $V_2 ( \mathbb{R}^{n+1} )$
with compact support such that  $K \subset \text{int} \{ X : u(X) \ge 1 \}$.

The following result can be found in \cite{GZ}.

\begin{lemma}
                        \label{lemA6}
For any compact set $K$ in $\mathbb{R}^{n+1}$,
\begin{align*}
\text{cap} (K) \ge \frac{1}{2} \Gamma (K).
\end{align*}
\end{lemma}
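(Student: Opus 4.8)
The claim is equivalent to $\Gamma(K)\le 2\,\text{cap}(K)$, so the plan is to produce, for each $\epsilon>0$, an admissible competitor $u$ in the definition of $\Gamma(K)$ whose functional $\sup_t\int_{\bR^n}u^2(x,t)\,dx+\int_{\bR^{n+1}}|\nabla u|^2\,dX$ is at most $2\,\text{cap}(K)+\epsilon$. First I would fix a compact neighborhood $L$ of $K$ that is a finite union of closed parabolic cylinders, with $K\subset\operatorname{int}L$ and $\text{cap}(L)\le\text{cap}(K)+\epsilon$; such $L$ exists because $\text{cap}$ is continuous from above along decreasing sequences $L_j\downarrow K$ of compact sets (the maximizing measures $\mu_j$ for $L_j$ have uniformly bounded mass, so a weak-$*$ limit $\mu$ is admissible for $K$ by lower semicontinuity of $\nu\mapsto F*\nu$, and $\mu(\bR^{n+1})=\lim\text{cap}(L_j)$). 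Let $\mu$ be a maximizer for $\text{cap}(L)$ and set $u:=F*\mu$; then $\operatorname{supp}\mu\subset L$, $\mu(\bR^{n+1})=\text{cap}(L)$, $0\le u\le 1$, and $F*\mu=1$ quasi-everywhere on $L$ by the equilibrium property of the capacitary measure. Since $L$ is a union of cylinders, the parabolic maximum principle upgrades this to $u\equiv 1$ on the open set $\operatorname{int}L$, which contains $K$, so $u$ satisfies the containment condition.

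The core of the argument is two energy bounds. For the $L^2$-term, $0\le u\le 1$ gives $u^2\le u$, and by Tonelli $\int_{\bR^n}u(x,t)\,dx=\int_{\bR^{n+1}}\bigl(\int_{\bR^n}F(x-y,t-s)\,dx\bigr)\,d\mu(y,s)=\mu(\{s<t\})\le\text{cap}(L)$, using $\int_{\bR^n}F(\cdot,\tau)\,dx=1$ for $\tau>0$; hence $\sup_t\|u(\cdot,t)\|_{L^2(\bR^n)}^2\le\text{cap}(L)$. For the gradient term, $u$ solves $u_t-\Delta u=\mu$ distributionally, so multiplying by $u$ and integrating over $\bR^{n+1}$ — justified by mollifying $\mu$ and passing to the limit — gives $\int_{\bR^{n+1}}|\nabla u|^2\,dX=\int u\,d\mu\le\mu(\bR^{n+1})=\text{cap}(L)$, the time-derivative term dropping out since $u(\cdot,t)\to0$ in $L^2(\bR^n)$ as $t\to-\infty$ (where $u\equiv0$ below $\operatorname{supp}\mu$) and as $t\to+\infty$ (heat-semigroup decay). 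Combining, the $\Gamma$-functional of $u$ is at most $2\,\text{cap}(L)\le 2\,\text{cap}(K)+2\epsilon$.

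It remains to make $u$ compactly supported. I would cut off with a smooth $\chi_R$ equal to $1$ on $C_R(0)$ — with $R$ large enough that $C_R(0)\supset L$, so the containment condition survives — supported in $C_{2R}(0)$, with $|\nabla\chi_R|\le N/R$. The $L^2$-term only decreases, and the only new contribution to the gradient term is $\int u^2|\nabla\chi_R|^2\,dX$, which tends to $0$ as $R\to\infty$: above $\operatorname{supp}\mu$ one has $u(\cdot,t)=e^{(t-t_0)\Delta}u(\cdot,t_0)$ with $u(\cdot,t_0)\in L^1\cap L^\infty$, so $\|u(\cdot,t)\|_{L^2}^2\lesssim t^{-n/2}$, while $\int_{C_{2R}\setminus C_R}|\nabla\chi_R|^2$ is of order $R^{-2}$ over a time range of length $\sim R^2$, and in every dimension $n\ge1$ the resulting product goes to $0$ (the Gaussian spatial tails of $u$ are handled the same way). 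Thus $\chi_R u$ is admissible for $\Gamma(K)$ with functional $\le 2\,\text{cap}(K)+2\epsilon+o_R(1)$; letting $R\to\infty$ and then $\epsilon\to0$ yields $\Gamma(K)\le 2\,\text{cap}(K)$, which is Lemma~\ref{lemA6}.

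The energy identity and the $L^2$-bound are soft; the genuine obstacle is the containment requirement $K\subset\operatorname{int}\{u\ge1\}$, because the equilibrium potential of an arbitrary compact set equals $1$ only quasi-everywhere, so one cannot take $L=K$. The device of thickening $K$ to a regular compact neighborhood $L$ on whose interior the capacitary potential genuinely equals $1$, at the arbitrarily small capacity cost $\text{cap}(L)-\text{cap}(K)\le\epsilon$, is what circumvents this; the truncation for compact support is then routine once the heat-kernel decay of the potential is noted.
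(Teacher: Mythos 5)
The paper itself offers no proof of this lemma (it simply refers to Gariepy--Ziemer \cite{GZ}), so the only question is whether your argument stands on its own. Its architecture is the standard one and most of it is sound: the two energy bounds are correct and are exactly where the factor $2$ comes from (the $L^2$ bound via $u^2\le u$ and $\int_{\bR^n}F(\cdot,\tau)\,dx=\mathbf{1}_{\tau>0}$, and $\int|\nabla u|^2=\int u\,d\mu\le\mu(\bR^{n+1})$ via mollification and the decay of $u(\cdot,t)$ as $t\to\pm\infty$); the outer-regularity step and the cutoff step are also fine, modulo the cross term in $\nabla(\chi_R u)$ which Cauchy--Schwarz handles, and modulo the observation that you never actually need $\mu$ to be a maximizer: any measure supported in $L$ with $F*\mu\le 1$ automatically has mass at most $\text{cap}(L)$ by the very definition of $\text{cap}$.

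The genuine gap is the containment step, which you correctly identify as the crux but then settle by appealing to a statement that is false for thermal capacity: that the capacitary potential equals $1$ quasi-everywhere on the compact set. The heat kernel is not symmetric in time, and $F*\nu$ only feels the part of $\nu$ lying strictly in the past. A horizontal plate $K=\bar B_1\times\{0\}$ has positive thermal capacity (Lebesgue measure on it is admissible), yet $F*\nu\equiv 0$ on $K$ for every measure $\nu$ supported in $K$, since $F(\cdot,0)=0$; moreover every piece of the plate of positive $n$-dimensional area has positive capacity, so ``quasi-everywhere'' cannot absorb the failure. Thus the equilibrium property you invoke does not hold for general compact sets; the correct statement (Watson's heat potential theory, r\'eduite/balayage) is that the capacitary potential equals $1$ at points where the set is non-thin from the past --- in particular at interior points of your union of parabolic cylinders $L$ --- and that is precisely the nontrivial potential-theoretic input which must be proved or cited. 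It can be supplied, e.g., by an explicit barrier construction for a cylinder ($u\equiv 1$ on the closed cylinder, caloric continuation elsewhere, check that this is a supertemperature whose Riesz measure is carried by the cylinder, so $u=F*\mu$ with $\mu$ admissible), or by quoting the regular-point theorem for heat potentials; your subsequent upgrade from ``$=1$ a.e.\ on $\operatorname{int}L$'' to ``$\equiv 1$ on $\operatorname{int}L$'' via the super--mean value property over small heat balls is fine once that input is in place. So: right strategy and correct energy estimates, but the key equilibrium/regular-point fact for the heat kernel is asserted in a form that is wrong in general and left unproven in the case you need.
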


As a consequence, we have
\begin{lemma}
                            \label{lemA7}
For any  set $K$   in $\mathbb{R}^{n+1}$,
we have, for some $N>0$,
\begin{align*}
\text{cap} (K) \ge N |K |^{\frac{n}{n+2}} .
\end{align*}
\end{lemma}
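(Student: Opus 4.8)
The plan is to bound the thermal capacity of $K$ from below by a constant multiple of $|K|^{n/(n+2)}$, using Lemma~\ref{lemA6} to reduce everything to a lower bound on the parabolic capacity $\Gamma(K)$. By monotonicity of both capacities we may assume $K$ is compact (and, if $|K|=0$, there is nothing to prove). So it suffices to show $\Gamma(K)\ge N|K|^{n/(n+2)}$ for a compact set $K$.

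To estimate $\Gamma(K)$ from below, I would start from an arbitrary admissible function $u\in V_2(\bR^{n+1})$ with compact support and $K\subset \operatorname{int}\{u\ge 1\}$, and show that
\[
\sup_t \int_{\bR^n} u^2(x,t)\,dx + \int_{\bR}\int_{\bR^n} |\nabla u|^2\,dX \ge N|K|^{n/(n+2)}.
\]
The natural tool is the parabolic (or anisotropic Gagliardo--Nirenberg--Sobolev) embedding: for $u\in V_2(\bR^{n+1})$ with compact support one has the multiplicative inequality
\[
\|u\|_{L_{2(n+2)/n}(\bR^{n+1})}^2 \le N\Big(\sup_t \|u(\cdot,t)\|_{L_2(\bR^n)}^2\Big)^{2/(n+2)}\Big(\int_{\bR^{n+1}}|\nabla u|^2\,dX\Big)^{n/(n+2)},
\]
which after Young's inequality yields
\[
\|u\|_{L_{2(n+2)/n}(\bR^{n+1})}^2 \le N\Big(\sup_t \|u(\cdot,t)\|_{L_2(\bR^n)}^2 + \int_{\bR^{n+1}}|\nabla u|^2\,dX\Big).
\]
On the other hand, since $u\ge 1$ on $K$, we have $\|u\|_{L_{2(n+2)/n}(\bR^{n+1})}^{2(n+2)/n}\ge |K|$, hence $\|u\|_{L_{2(n+2)/n}(\bR^{n+1})}^2\ge |K|^{n/(n+2)}$. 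Combining these two displays and taking the infimum over all admissible $u$ gives $\Gamma(K)\ge N|K|^{n/(n+2)}$, and then Lemma~\ref{lemA6} gives $\operatorname{cap}(K)\ge \tfrac12\Gamma(K)\ge N|K|^{n/(n+2)}$.

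The main point requiring care is the parabolic embedding inequality itself: one must make sure that the version valid for functions in $V_2(\bR^{n+1})$ (with the $\sup_t L^2$ norm in space, not a full space-time Sobolev norm) is the one used, since this is exactly the energy class appearing in the definition of $\Gamma$. This is the classical inequality from \cite[Chap.~II]{LSU} (the ``multiplicative inequality'' for the space $V_2$), so I would simply invoke it; the exponent $2(n+2)/n$ and the interpolation weights $2/(n+2)$ and $n/(n+2)$ are forced by scaling, which provides a quick consistency check. Everything else—monotonicity of capacity under set inclusion, the reduction to compact $K$, and Young's inequality—is routine.
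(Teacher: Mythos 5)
Your proposal is correct and follows essentially the same route as the paper: reduce to the parabolic capacity via Lemma \ref{lemA6}, then bound the $V_2$ energy of any admissible $u$ from below by $\|u\|_{L_{2(n+2)/n}}^2\ge |K|^{n/(n+2)}$ using the parabolic Gagliardo--Nirenberg--Sobolev embedding for the class $V_2$. The only cosmetic difference is that you pass through the multiplicative form plus Young's inequality, while the paper quotes the additive form of the embedding directly.
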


\begin{proof}
From Lemma \ref{lemA6},
\begin{align*}
\text{cap} (K) \ge \frac{1}{2} \Gamma (K) .
\end{align*}
By the parabolic type Gagliardo--Nirenberg--Sobolev inequality (see, for instance, \cite[Theorem IV.6.9]{Li}), for any  $u \in V_2 ( \mathbb{R}^{n+1} )$
with compact support such that  $K \subset \text{int} \{ X : u(X) \ge 1 \}$,
\begin{equation*}
\sup_t \int_{\mathbb{R}^n}  u^2 (x,t)\,dx +
\int_{ \mathbb{R} } \int_{  \mathbb{R}^n } |\nabla u|^2 \,dX
\ge N\left( \int   |u|^{ \frac{2(n+2)}{n} }   \,dX \right)^{\frac n {n+2}}
\ge N |K |^{\frac{n}{n+2}}.
\end{equation*}
By the definition of $\Gamma(K)$, the lemma follows.
\end{proof}

Finally, we show that the condition (B) implies Wiener's criterion.

\begin{thm}
Let $Q$ satisfy the condition (B). Then any point on $\partial_p Q$ is regular.
Namely,
$X_0 \in \partial_p Q$ satisfies the following Wiener's  criterion from \cite[Theorem 1]{EG82}:
for any $\lambda>1$,
\begin{align*}
\sum_{k=1}^{\infty} \lambda^k
\text{cap} (Q^c  \cap \{  \lambda^{k+1} \ge F(X_0 - X) \ge \lambda^k \}  ) = \infty.
\end{align*}
\end{thm}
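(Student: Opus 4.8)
The plan is to feed the measure bound from condition (B) into the capacity lower bound of Lemma~\ref{lemA7}, to exploit an exact cancellation of two scaling exponents, and then to descend from the coarse scale furnished by condition (B) to an arbitrary $\lambda>1$ using subadditivity of the thermal capacity. I do not anticipate a genuine difficulty: the argument is essentially bookkeeping around the identity $(1+2/n)\cdot\tfrac{n}{n+2}=1$.

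First I would fix $X_0=(x_0,t_0)\in\partial_pQ$ and an arbitrary $\lambda>1$, and invoke the definition of condition (B) with this $\lambda$: there exist $\theta_1\in(0,1)$ (the constant from the definition) and $k_0\in\bN$ such that, writing $\mu:=\lambda^{k_0}>1$ and
\[
B_m:=\{\,\mu^{m+1}\ge F(X_0-X)\ge\mu^m\,\},\qquad m\in\bN,
\]
one has $|B_m\setminus Q|>\theta_1|B_m|$ for every $m$. Up to a set of measure zero $B_m=E(x_0,t_0,\mu^m)\setminus E(x_0,t_0,\mu^{m+1})$ with $E(x_0,t_0,\mu^{m+1})\subset E(x_0,t_0,\mu^m)$, so using $|E(x_0,t_0,s)|=|E(s)|=s^{-1-2/n}|E(1)|$,
\[
|B_m|=|E(\mu^m)|-|E(\mu^{m+1})|=|E(1)|\bigl(1-\mu^{-1-2/n}\bigr)\,\mu^{-m(1+2/n)}=:c_0\,\mu^{-m(1+2/n)}
\]
with $c_0=c_0(n,\lambda,\theta_0)>0$. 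Hence $|Q^c\cap B_m|\ge\theta_1 c_0\,\mu^{-m(1+2/n)}$, and by Lemma~\ref{lemA7} (stated for arbitrary sets),
\[
\text{cap}(Q^c\cap B_m)\ge N\,|Q^c\cap B_m|^{\frac{n}{n+2}}\ge N(\theta_1 c_0)^{\frac{n}{n+2}}\,\mu^{-m(1+2/n)\frac{n}{n+2}}=N_1\,\mu^{-m},
\]
the last equality being the cancellation $(1+2/n)\cdot\tfrac{n}{n+2}=1$, with $N_1=N_1(n,\lambda,\theta_0)>0$ independent of $m$.

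Finally I would descend from the scale $\mu$ to $\lambda$. With $A_k:=\{\lambda^{k+1}\ge F(X_0-X)\ge\lambda^k\}$, the interval identity $[\mu^m,\mu^{m+1}]=[\lambda^{k_0m},\lambda^{k_0(m+1)}]=\bigcup_{j=0}^{k_0-1}[\lambda^{k_0m+j},\lambda^{k_0m+j+1}]$ gives $B_m=\bigcup_{j=0}^{k_0-1}A_{k_0m+j}$, so by the finite subadditivity of $\text{cap}$ (immediate from $F\ge0$, by splitting a competitor measure on $Q^c\cap B_m$ into its restrictions to the sets $Q^c\cap A_{k_0m+j}$),
\[
\sum_{j=0}^{k_0-1}\text{cap}(Q^c\cap A_{k_0m+j})\ge\text{cap}(Q^c\cap B_m)\ge N_1\,\mu^{-m}.
\]
Since $\lambda^{k_0m+j}\ge\lambda^{k_0m}=\mu^m$ for $0\le j\le k_0-1$, grouping the Wiener series into consecutive blocks of length $k_0$ yields
\[
\sum_{k=1}^{\infty}\lambda^k\,\text{cap}\bigl(Q^c\cap\{\lambda^{k+1}\ge F(X_0-X)\ge\lambda^k\}\bigr)\ge\sum_{m=1}^{\infty}\mu^m\sum_{j=0}^{k_0-1}\text{cap}(Q^c\cap A_{k_0m+j})\ge\sum_{m=1}^{\infty}N_1=\infty.
\]
As $\lambda>1$ was arbitrary, $X_0$ satisfies Wiener's criterion and is therefore regular by \cite[Theorem 1]{EG82}. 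I expect the argument to be routine; the one genuinely load-bearing point is the cancellation $(1+2/n)\cdot n/(n+2)=1$ between the volume scaling of heat balls and the Sobolev exponent in Lemma~\ref{lemA7}, with the only other care needed being that the relevant sets $Q^c\cap B_m$, $Q^c\cap A_k$ are admissible for $\text{cap}$ (they are bounded, and differ from compact sets by at most the single null-capacity point $X_0$).
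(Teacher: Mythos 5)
Your proof is correct and follows essentially the same route as the paper's: condition (B) supplies the measure lower bound on the annuli at the coarse scale $\lambda^{k_0}$, Lemma \ref{lemA7} together with the cancellation $(1+2/n)\cdot\frac{n}{n+2}=1$ (equivalently, the heat-ball scaling \eqref{eq12.12}) makes each grouped term bounded below by a positive constant, and the passage from scale $\lambda^{k_0}$ back to an arbitrary $\lambda$ is the same block-grouping the paper performs in its first displayed inequality. The only difference is cosmetic: you spell out the finite subadditivity of the thermal capacity (and the compactness-up-to-the-point-$X_0$ issue) that the paper uses implicitly in that grouping step.
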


\begin{proof}
We take the constant $k_0$ from Theorem \ref{thmA5}. It then follows from Theorem \ref{thmA5}, Lemma \ref{lemA7}, and \eqref{eq12.12} that
\begin{align*}
& \sum_{k=1}^{\infty} \lambda^k
\text{cap} (Q^c  \cap \{  \lambda^{k+1} \ge F(X_0 - X) \ge \lambda^k \}  )
\\
& \ge  \frac{1}{\lambda^{k_0}}
\sum_{k=1}^{\infty} \lambda^{k_0k}
\text{cap} (Q^c  \cap \{  \lambda^{k_0(k+1)} \ge F(X_0 - X) \ge \lambda^{k_0k} \}  )
\\
& \ge  \frac{N}{\lambda^{k_0}}
\sum_{k=1}^{\infty} \lambda^{k_0k}  |Q^c  \cap \{  \lambda^{k_0(k+1)} \ge F(X_0 - X) \ge \lambda^{k_0k} \}  |^{\frac n {n+2}}\\
& \ge  \frac{N}{\lambda^{k_0}}
\sum_{k=1}^{\infty} \lambda^{k_0k}  \theta_1^{\frac n {n+2}}|\{  \lambda^{k_0(k+1)} \ge F(X_0 - X) \ge \lambda^{k_0k} \}  |^{\frac n {n+2}}\\
&=  \frac{N}{\lambda^{k_0}}\sum_{k=1}^{\infty} \theta_1^{\frac n {n+2}}|\{  \lambda^{k_0} \ge F(X_0 - X) \ge 1 \}  |^{\frac n {n+2}}= \infty .
\end{align*}
The theorem is proved.
\end{proof}

\end{document}